\newtheorem{theorem}{Theorem}[section]
\newtheorem{lemma}[theorem]{Lemma}
\newtheorem{proposition}[theorem]{Proposition}
\newtheorem{corollary}[theorem]{Corollary}
\newtheorem{assumption}[theorem]{Assumption}
\newtheorem*{extra*}{Proof of \Cref{prop:box}}
\theoremstyle{remark}
\newtheorem{remark}[theorem]{Remark}
\newcommand{\maxIndex}{i^{(1)}_t}
\newcommand{\secIndex}{i^{(2)}_t}
\newcommand{\maxPhi}{\varphi^{(1)}_t}
\newcommand{\maxZ}{z^{(1)}_t}
\newcommand{\secZ}{z^{(2)}_t}
\newcommand{\maxLambda}{\lambda^{(1)}_t}
\newcommand{\PP}{\mathbb{P}}
\newcommand{\EE}{\mathbb{E}}
\newcommand \id{\mathbbm 1}
\newcommand{\ballOfExpDecay}[1]{B( #1, h_t |#1| /3)}
\newcommand{\xDomain}[1]{\ballOfExpDecay{#1} \cup \{0\}}
\DeclareMathOperator*{\argmax}{arg\,max}
\DeclarePairedDelimiter\floor{\lfloor}{\rfloor}
\begin{document}
\author{Artiom Fiodorov \and Stephen Muirhead  \\
\tiny{University College London}  \\
\tiny \lowercase{a.fiodorov@ucl.ac.uk  \qquad s.muirhead@ucl.ac.uk}
 }

\title[Complete localisation in the PAM with Weibull potential]{Complete localisation and exponential shape of the  parabolic Anderson model with Weibull potential field}

\begin{abstract}
We consider the parabolic Anderson model with Weibull potential field, for all values of the Weibull parameter. We prove that the solution is eventually localised at a single site with overwhelming probability (complete localisation) and, moreover, that the solution has exponential shape around the localisation site. We determine the localisation site explicitly, and derive limit formulae for its distance, the profile of the nearby potential field and its ageing behaviour. We also prove that the localisation site is determined locally, that is, by maximising a certain time-dependent functional that depends only on: (i) the value of the potential field in a neighbourhood of fixed radius around a site; and (ii) the distance of that site to the origin.

Our results extend the class of potential field distributions for which the parabolic Anderson model is known to completely localise; previously, this had only been established in the case where the potential field distribution has sub-Gaussian tail decay, corresponding to a Weibull parameter less than two.
\end{abstract}

\subjclass[2010]{60H25 (Primary) 82C44, 60F10, 35P05 (Secondary)}
\keywords{Parabolic Anderson model, Anderson Hamiltonian, random Schr\"{o}dinger operator, localisation, intermittency, Weibull tail, spectral gap}

\thanks{This research was supported by a Graduate Research Scholarship from University College London and the Leverhulme Research Grant RPG-2012-608 held by Nadia Sidorova. We gratefully acknowledge the extensive feedback provided by Nadia Sidorova.}

\date{\today}

\maketitle

\vspace{-0.5cm}
\subsection*{Correction to published version} 
This is an updated version of \cite{Fiodorov14}, correcting an error in the definition of the $n$\textit{-local principal eigenvalue} $\tilde{\lambda}_t^{(n)}(z)$ used to define the localisation site $Z_t^{(1, \rho)}$. This error does not affect the conclusions of the paper.

In \cite{Fiodorov14}, $\tilde{\lambda}_t^{(n)}(z)$ is defined as the principal eigenvalue of the operator
\begin{align*}
\tilde{\mathcal{H}}_n^{(z)} &:=  \left(\Delta_{V_t} +  \tilde{\xi} +  (\xi - \tilde{\xi}) \id_{\{z\}} \right)\id_{B(z, n)} \\ &= \Delta_{V_t}\id_{B(z, n)}  +   \left( \tilde{\xi}  +  (\xi - \tilde{\xi}) \id_{\{z\}}  \right) \id_{B(z, n)}  \, .
\end{align*}
While this is sufficient in the case $\gamma < 4$, the key Lemma \ref{lem:jtorho} does not hold under this definition if $\gamma \ge 4$. Instead, in the general case $\tilde{\lambda}_t^{(n)}(z)$ should be defined as the principal eigenvalue of the operator
\begin{align*}
\tilde{\mathcal{H}}_n^{(z)} :=  \Delta_{V_t}\id_{B(z, \hat n)}   +   \left( \tilde{\xi}  +  (\xi - \tilde{\xi}) \id_{\{z\}}  \right) \id_{B(z, n)}  \, ,
\end{align*}
where
\[   \hat n := \begin{cases}
n & \gamma < 4 \, ,\\
n + 1 & \gamma \ge 4 \, .
\end{cases} \]
Under this modification, Lemma \ref{lem:jtorho} and its proof, as well as the rest of the paper, remain valid unchanged (with appropriate modifications to notation). 
\section{Introduction}
\label{sec:intro}
\subsection{The parabolic Anderson model}
We consider the Cauchy equation on the lattice $(\mathbb{Z}^d, |\cdot|_{\ell^1})$
\begin{align} \label{PAM}
\frac{\partial u(t, z)}{\partial t} &= (\Delta + \xi) u(t, z) \, , \qquad (t, z) \in [0, \infty) \times \mathbb{Z}^d\\
\nonumber u(0, z) &= \id_{\{0\}}(z) \, , \, \quad \qquad \qquad z \in \mathbb{Z}^d
\end{align}
where $\Delta$ is the \textit{discrete Laplacian} on $\mathbb{Z}^d$ defined by $(\Delta f)(z) = \sum_{y\sim z} f(y)$, the set $\{\xi(z)\}_{z \in \mathbb{Z}^d }$ is a collection of independent identically distributed (i.i.d.) random variables known as the \textit{random potential field}, and $\id_{\{0\}}$ is the indicator function of the origin. For a large class of distributions $\xi(\cdot)$, \cref{PAM} has a unique non-negative solution (see \cite{Gartner90}). 

Equation (\ref{PAM}) is often called the \textit{parabolic Anderson model} (PAM), named after the physicist P.W.\ Anderson who used the random Schr\"{o}dinger operator $\mathcal{\bar{H}} := \Delta + \xi$ to model electron localisation inside a semiconductor (\textit{Anderson localisation}; see \cite{Anderson58}). The Cauchy form of the problem in \cref{PAM} arises naturally in a system consisting of a single particle undergoing diffusion while branching at a rate determined by a (random) potential field (see \cite{Gartner90}[Section 1.2]).

The PAM and its variants are of great interest in the theory of random processes because they exhibit \textit{intermittency}, that is, unlike other commonly studied random processes such as diffusions, their long-term behaviour cannot be described with an averaging principle. The PAM is said to \textit{localise} if, as $t \to \infty$, the \textit{total mass} of the process $ U(t) := \sum_{z \in \mathbb{Z}^d} u(t, z)$ is eventually concentrated on a small number of sites, i.e.\ if there exists a (random) \textit{localisation set} $\Gamma_t$ such that
\begin{align}
\label{eq:local}
\frac{\sum_{z \in \Gamma_t} u(t, z)}{U(t)} \to 1 \, \qquad \text{in probability} \,.
\end{align}
The most extreme form of localisation is \textit{complete localisation}, which occurs if the total mass is eventually concentrated at just one site, i.e.\ if $\Gamma_t$ can be chosen in \cref{eq:local} such that $|\Gamma_t| = 1$.

It turns out that complete localisation cannot hold almost surely, since the localisation site will switch infinitely often and so, at certain times, the solution must be concentrated on at least two distinct sites (see, e.g., \cite{Konig09} for an example of almost sure convergence in the PAM on exactly two sites). 

Note that elsewhere in the literature (see, e.g., \cite{Sidorova12}) the convention
$(\Delta f)(z) := \sum_{y\sim z} (f(y) - f(z))$ is used to define the discrete
Laplacian in the PAM\@. This is equivalent to shifting the random potential field by the constant $2d$, and makes no qualitative difference to the model.

\subsection{Localisation classes}
It is known that the strength of intermittency and localisation in the PAM is governed by the thickness of the upper-tail of the potential field distribution $\xi(\cdot)$, and in particular the asymptotic growth rate of 
$$ g_\xi(x) := -\log (\PP(\xi(\cdot) > x)) \,.$$ 
Depending on this growth rate, the PAM can exhibit distinct types of localisation behaviour, which are often categorised along two qualitative dimensions: (1) the number of connected components of $\Gamma_t$ (\textit{localisation islands}) in the limit (i.e.\ single, bounded or growing);
and (2) the size of each localisation island in the limit (i.e.\ single, bounded or growing). 

Universality classes with respect to the \textit{size} of each localisation island are well-understood (see, e.g., \cite{vanDerHofstad06} and \cite{Gartner07}). It was proven in \cite{Gartner07} that the double-exponential distribution forms the critical threshold between these classes. More precisely, if $g_\xi(x) = O(e^{x^\chi})$ for some $\chi < 1$ (i.e.\ tails heavier than double-exponential) then localisation islands consist of a single site. This class includes \textit{Weibull-like} tails, where $g_\xi(x) \sim x^\gamma$ for $\gamma > 0$, and \textit{Pareto-like} tails, where $g_\xi(x) \sim \gamma \log x$ for $\gamma > d$ (recall from \cite{Gartner90} that if $\gamma < d$ then the solution to \cref{PAM} is not well-defined; if $\gamma = d$ then the solution is well-defined only for $d > 1$). Conversely, if $e^{x^\chi} = O(g_\xi(x))$ for some $\chi > 1$ (i.e.\ tails lighter than double-exponential, including bounded tails), then the size of localisation islands grow to infinity. 

On the other hand, universality classes with respect to the \textit{number} of localisation islands are not at all well-understood. In particular, it is not known whether the PAM with $g_\xi(x) = O(e^{x^\chi})$ for some $\chi < 1$ \textit{always} exhibits complete localisation. Indeed, this was conjectured to be false in \cite{Konig09}. Up until now, complete localisation has only been exhibited for the PAM with Pareto potential (in \cite{Konig09}) and Weibull potential with parameter $\gamma < 2$ (in \cite{Sidorova12}), which includes the case of exponential tails. This has left open the question as to whether the PAM with Weibull potential with parameter $\gamma \geq 2$, which includes the important class of normal tails, also exhibits complete localisation.

\subsection{Main results}
We consider the PAM with Weibull potential, that is, where $\xi(\cdot)$ satisfies $g_\xi(x) = x^\gamma$, for some $\gamma > 0$. We prove that the PAM with Weibull potential is eventually localised at a single site with overwhelming probability
(\textit{complete localisation}) and, moreover, that the renormalised solution has exponential shape around this site. We determine the localisation site explicitly, and derive limit formulae for its distance, the profile of the nearby potential field and its ageing behaviour. We also prove that the localisation site is determined locally, that is, by maximising a certain time-dependent functional that depends only on: (i) the values of $\xi(\cdot)$ in a neighbourhood of \textit{fixed} radius $\rho := \floor{(\gamma-1)/2}^+$ around a site, where $x^+:= \max\{x, 0\}$; and (ii) the distance of that site to the origin. In particular, if $\gamma < 3$ then $\rho = 0$ and so the localisation site is determined only by maximising a certain time-dependent functional of the pair $(\xi(\cdot), |\cdot|_{\ell^1})$. We shall refer to $\rho$ as the \textit{radius of influence}.

In order to state these results explicitly, we introduce some notation. Define a large `macrobox' $V_t := [-R_t, R_t]^d \subseteq \mathbb{Z}^d$, with $R_t := t (\log t)^{\frac{1}{\gamma}}$, identifying its opposite faces so that it is
properly considered a $d$-dimensional torus. Further, for each $a \leq 1$, define the associated macrobox level $L_{t, a} := ((1-a) \log |V_t|)^{\frac{1}{\gamma}}$ and let the subset $\Pi^{(L_{t,a})} := \left\{ z \in   V_t : \xi(z) > L_{t,a} \right\}$ consist of sites within the macrobox $V_t$ at which $\xi$-exceedences of the level $L_{t,a}$ occur. Define also, for each $z \in V_t$ and $n \in \mathbb{N}$, the ball $B(z, n) := \{y \in V_t: |y-z|_{\ell^1} \leq   n\}$, considered as a subset of $V_t$ (i.e.\ with the metric acting on the torus). Henceforth, for simplicity, we simply write $|\cdot|$ in place of $|\cdot|_{\ell^1}$ when denoting distances on $\mathbb{Z}^d$ or $V_t$. 

Fix a constant $0 < \theta < 1/2$, and abbreviate $L_{t}:= L_{t, \theta}$. Let $\tilde{\xi} := \xi \id_{V_t \setminus \Pi^{(L_t)}}$ be the \textit{$L_t$-punctured} potential field. For each $z \in V_t$ and $n \in \mathbb{N}$, define the $L_t$-punctured $n$-local Hamiltonian $\tilde{\mathcal{H}}_n^{(z)}$
\begin{align}
\label{eq:defH}
\tilde{\mathcal{H}}_n^{(z)} :=  \Delta_{V_t}\id_{B(z, \hat n)}  +  \left( \tilde{\xi} +  (\xi - \tilde{\xi}) \id_{\{z\}} \right) \id_{B(z, n)} \, ,
\end{align}
where $\Delta_{V_t}$ denotes $\Delta$ restricted to the torus $V_t$, and
\[   \hat n := \begin{cases}
n & \gamma < 4 \, , \\
n + 1 & \gamma \ge 4 \, .
\end{cases} \]
Let $\tilde{\lambda}_t^{(n)}(z)$ denote the principal eigenvalue of $\tilde{\mathcal{H}}_n^{(z)}$. To be clear, equation \eqref{eq:defH} means that $\tilde{\mathcal{H}}_n^{(z)}$ acts as
$$ (\tilde{\mathcal{H}}_n^{(z)} f)(x) =  \begin{cases}
\xi(x) f(x) + \sum_{ \{ y \in V_t: |y-x| = 1 \}} f(y)  &  \text{if } x \in \{z\} \cup (B(z, n) \setminus \Pi^{(L_t)}) \\
\sum_{ \{ y \in V_t: |y-x| = 1 \}} f(y)  & \text{if } x \in B(z, n) \cap (\Pi^{(L_t)} \setminus \{z\}) \\
 \sum_{ \{ y \in V_t: |y-x| = 1 \}} f(y)  & \text{if } x \in B(z, \hat n) \setminus B(z, n) \\
0 & \text{if } x \notin B(z, \hat n) \\
 \end{cases}$$
with all distances being on the torus $V_t$. 

We shall call $\tilde{\lambda}_t^{(n)}(z)$ the $n$-\textit{local principal eigenvalue at $z$} and remark that it is a certain function of the set $\xi^{(n)}(z) := \{\xi(y)\}_{y \in B(z, n)}$. Note that the $\{\tilde{\lambda}_t^{(n)}(z)\}_{z \in V_t}$ are identically distributed, and have a dependency range bounded by $2n$, i.e.\ the random variables $\tilde{\lambda}_t^{(n)}(y)$ and $\tilde{\lambda}_t^{(n)}(z)$ are independent if and only if $|y-z| > 2n$. Remark also that in the case $\gamma < 4$, $\tilde{\lambda}_t^{(0)}(z)$ is simply the potential $\xi(z)$.

For any sufficiently large $t$, define a \textit{penalisation functional} $\tilde{\Psi}^{(n)}_{t}: V_t \to \mathbb{R}$ by
$$ \tilde{\Psi}^{(n)}_{t}(z) := \tilde{\lambda}_t^{(n)}(z) - \frac{|z|}{\gamma t} \log \log t $$
and let $Z_{t}^{(1, n)} := \argmax_{z \in V_t} \tilde{\Psi}^{(n)}_{t}(z)$ and $T^{(n)}_t := \inf\{ s > 0 : Z_{t+s}^{(1, n)} \neq Z_t^{(1, n)} \}$. Note that, for any $t$, the site $Z_t^{(1, n)}$ is well-defined almost surely, since $V_t$ is finite. Moreover, as we shall see, $Z_t^{(1, n)}$ will turn out to be independent of the choice of $\theta$. 

Define a function $q:\mathbb{N} \to [0, 1]$ by
$$ q(x) := \left(1 - \frac{2x}{\gamma - 1} \right)^+ $$
using the convention that $0/0 := 0$. Introduce the scales
$$ r_t := \frac{t (d \log t)^{\frac{1}{\gamma} - 1}}{\log \log t}  \ , \ a_t := (d \log t)^\frac{1}{\gamma} \quad \text{and} \quad d_t := \frac{1}{\gamma}(d \log t)^{\frac{1}{\gamma} - 1}$$
and an auxiliary scaling function $\kappa_t \to 0$ that decays arbitrarily slowly. Finally, let $B_t$ denote the ball $\{z \in \mathbb{Z}^d : |z-Z_t^{(1, \rho)}| < r_t \kappa _t\}$, considered as a subset of $\mathbb{Z}^d$.

Our main results can then be summarised by the following:

\begin{theorem}[Profile of the renormalised solution] \label{thm:main1}
As $t \to \infty$, the following hold:
\begin{enumerate}[(a)]
\item \label{thm:main1a}
For each $z \in B_t$ uniformly,
$$ \frac{ \log \left( \frac{u(t, z)}{U(t)}\right) }{ \frac{1}{\gamma}|z-Z_t^{(1, \rho)}|\log \log t }  \to -1 \qquad \text{in probability} \, ;$$ 
\item \label{thm:main1b}
Moreover,
$$ e^{t d_t \kappa_t} \sum_{z \notin B_t} \frac{u(t, z)}{U(t)}  \qquad \text{is
bounded in probability} \,.$$ 
\end{enumerate}
\end{theorem}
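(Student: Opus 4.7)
The plan is to reduce both statements to an exponential-decay estimate for the principal eigenfunction $\maxPhi$ of the torus Hamiltonian $\mathcal{H} := \Delta_{V_t} + \xi$. Writing $(\lambda_k, \varphi_k)_{k \geq 1}$ for its $\ell^2$-normalised eigenpairs in decreasing order of eigenvalue, the spectral theorem gives
\[
\frac{u(t, z)}{U(t)} = \frac{\sum_k e^{t\lambda_k} \varphi_k(0) \varphi_k(z)}{\sum_k e^{t\lambda_k} \varphi_k(0) \sum_x \varphi_k(x)}.
\]
I would first invoke the preceding sections of the paper to establish that $\maxLambda$ is attained near $Z_t^{(1, \rho)}$ with $\maxLambda = \tilde{\lambda}_t^{(\rho)}(Z_t^{(1, \rho)}) + o(1/t)$, and that there is a spectral gap $t(\maxLambda - \lambda^{(2)}_t) \to \infty$ in probability, with quantitative control sufficient to dominate the exponential scales appearing in the theorem. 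Together with crude upper bounds on the contribution of the non-top eigenfunctions---via Parseval applied at $0$ and a union bound over the polynomial volume of $V_t$---these reduce the ratio above to $\maxPhi(z)/\sum_x \maxPhi(x)$, up to multiplicative errors $1 + o(1)$, uniformly over $z \in V_t$.

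The core step is then a two-sided exponential bound for $\maxPhi$ around $Z_t^{(1, \rho)}$. At any site $y \notin \Pi^{(L_t)} \cup \{Z_t^{(1, \rho)}\}$ the eigenvalue equation reads $(\maxLambda - \xi(y))\,\maxPhi(y) = \sum_{x \sim y} \maxPhi(x)$, and since $\xi(y) \leq L_t \ll \maxLambda \sim a_t$ off $\Pi^{(L_t)}$, this gives the pointwise contraction $\maxPhi(y) \leq (2d/(\maxLambda - L_t))\,\max_{x \sim y} \maxPhi(x)$. Iterating along a shortest path from $Z_t^{(1, \rho)}$ to $z$ that avoids $\Pi^{(L_t)}$ (which exists by the sparsity of that set) and using $\log(\maxLambda / (2d)) = \tfrac{1}{\gamma}\log\log t\,(1 + o(1))$ gives
\[
\maxPhi(z) \leq \maxPhi(Z_t^{(1, \rho)}) \exp\!\left(-\tfrac{1}{\gamma}|z - Z_t^{(1, \rho)}|\log\log t\,(1 + o(1))\right),
\]
with a matching lower bound along a single unobstructed path coming either from the Feynman--Kac representation or from a direct test-function computation. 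This delivers \ref{thm:main1a}. Part \ref{thm:main1b} then follows from the arithmetic identity $r_t \kappa_t \cdot \tfrac{1}{\gamma}\log\log t = t d_t \kappa_t$, which is immediate from the definitions of $r_t$ and $d_t$: summing the exponential upper bound as a geometric series over $z \notin B_t$ yields that the mass outside $B_t$ is $O(e^{-t d_t \kappa_t})$, since each additional lattice step costs a factor $(\log t)^{-1/\gamma}$, which comfortably absorbs the polynomial volume growth of $V_t$.

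The main obstacle will be handling the sparse but non-empty set $\Pi^{(L_t)}$ of high-potential sites, at which the naive pointwise contraction fails. This is precisely the motivation for the punctured Hamiltonian $\tilde{\mathcal{H}}_n^{(z)}$ of \cref{eq:defH}: I would perform the contraction on the $\tilde{\xi}$-punctured problem, in which only the site $Z_t^{(1, \rho)}$ itself carries a large potential value, encode the local cluster geometry of $\Pi^{(L_t)}$ near $Z_t^{(1, \rho)}$ through the principal eigenfunction of $\tilde{\mathcal{H}}_\rho^{(Z_t^{(1, \rho)})}$, and rule out the remaining exceedance sites (at distance $\gg \rho$ from $Z_t^{(1, \rho)}$) using that they are strictly separated from $Z_t^{(1, \rho)}$ by the penalisation functional $\tilde{\Psi}^{(\rho)}_t$. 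A secondary technical point is obtaining the spectral-gap estimate uniformly across $V_t$, for which I would rely on a moment bound on the number of sites whose $\tilde{\Psi}^{(\rho)}_t$-value lies within the required precision of the maximiser.
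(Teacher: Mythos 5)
Your overall strategy --- spectral decomposition, identification of a single dominant eigenfunction, and a pointwise-contraction argument for its exponential decay --- matches the paper's scheme. However, there is a genuine gap in how you identify which term in the spectral sum dominates, and it is a gap that the paper's penalised-spectrum functional is designed precisely to close.

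You propose to control the non-dominant terms with a spectral gap $t(\maxLambda - \lambda^{(2)}_t) \to \infty$ together with crude Parseval/union bounds. But the $i$'th summand in $u_{V_t}(t, \cdot) = \sum_i e^{t\lambda_{t,i}} \varphi_{t,i}(0)\varphi_{t,i}(\cdot)$ has magnitude $e^{t\lambda_{t,i}}\,|\varphi_{t,i}(0)|\,|\varphi_{t,i}(\cdot)|$, so its size depends jointly on the eigenvalue \emph{and} the eigenfunction's value at the origin. These two pull in opposite directions: the largest eigenvalues are attained at sites whose local principal eigenvalue is extreme, and over the macrobox $V_t$ (of radius $R_t = t(\log t)^{1/\gamma} \gg r_t$) such sites are typically far from the origin, so the corresponding $|\varphi_{t,i}(0)|$ is exponentially small in a way that can overwhelm any gap between $\lambda_{t,1}$ and $\lambda_{t,2}$. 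The site $Z_t^{(1,\rho)}$ does \emph{not} maximise $\tilde{\lambda}^{(\rho)}_t$; it maximises the penalised functional $\tilde{\Psi}^{(\rho)}_t(z) = \tilde{\lambda}^{(\rho)}_t(z) - \tfrac{|z|}{\gamma t}\log\log t$. Correspondingly, $\maxLambda$ in the paper is $\lambda_{t,\maxIndex}$ where $\maxIndex$ maximises $\Psi_t(i) := \lambda_{t,i} + t^{-1}\log|\varphi_{t,i}(0)|$, which need not be $\lambda_{t,1}$. The paper's quantitative control (part (a) of \Cref{thm:aux}) is a gap $\Psi_t(\maxIndex) - \Psi_t(\secIndex) > d_t e_t$ in this penalised spectrum, and it is this gap --- not an eigenvalue gap --- that makes the remainder $\sum_{i\neq\maxIndex} e^{t\lambda_{t,i}}\varphi_{t,i}(0)\varphi_{t,i}(\cdot)$ negligible after a $|V_t|^{3/2}$ Cauchy--Schwarz factor. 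Establishing this gap in turn requires two-sided bounds on $|\varphi_{t,i}(0)|$ for \emph{all} $i \le |V_t|^\varepsilon$ (\Cref{corry:upperbound}, \Cref{corry:lowerboundAtZero}), which is considerably more than the one-sided decay bound around a single site that your contraction argument yields. Without a replacement for this step, the reduction to $\maxPhi(z)/\sum_x\maxPhi(x)$ in your first paragraph is not justified.

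Two smaller points. First, $\maxLambda = \tilde{\lambda}^{(\rho)}_t(Z_t^{(1,\rho)}) + o(1/t)$ is far more precision than the paper claims or needs; the correspondence in \Cref{lem:corres} only gives $o(d_t e_t)$, and $d_t e_t$ is not $o(1/t)$ (indeed $t d_t \to \infty$). Second, your pointwise-contraction and geometric-series computations for parts (a) and (b), once restricted to the correct dominant eigenfunction, are essentially the content of \Cref{corry:expdecay}, \Cref{prop:lowerbound} and the summation in part (iii) of \Cref{thm:aux}(\ref{thm:aux2}); that portion of your plan is sound, though the paper implements the decay via a Green's-function cluster expansion around $z_{t,i}$ rather than a direct use of the eigenvalue equation, precisely to handle the interaction with the exceedance set $\Pi^{(L_t)}$ that you flag as the main obstacle.
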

\begin{corollary}[Complete localisation]
\label{cor:cl}
 As $t \to \infty$,
 $$ \frac{u(t, Z_t^{(1, \rho)})}{U(t)} \to 1 \qquad \text{in probability} \,.$$
\end{corollary}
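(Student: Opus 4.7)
The plan is to derive the corollary from \Cref{thm:main1} by partitioning the total mass into three pieces: the mass at the site $Z_t^{(1,\rho)}$ itself, the mass on $B_t \setminus \{Z_t^{(1,\rho)}\}$, and the mass outside $B_t$. Writing
\[
1 - \frac{u(t, Z_t^{(1, \rho)})}{U(t)} = \sum_{z \in B_t \setminus \{Z_t^{(1,\rho)}\}} \frac{u(t,z)}{U(t)} + \sum_{z \notin B_t} \frac{u(t,z)}{U(t)},
\]
it suffices to show that each of the two sums on the right tends to zero in probability.

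For the far sum, I would apply \Cref{thm:main1b} directly. One first checks that the prefactor $td_t\kappa_t \to \infty$: since $td_t = (t/\gamma)(d\log t)^{1/\gamma - 1}$ grows at least polynomially in $t$ up to logarithmic corrections, and $\kappa_t \to 0$ is assumed to decay arbitrarily slowly, we may assume $\kappa_t$ is chosen so that $td_t\kappa_t \to \infty$. Hence $e^{td_t\kappa_t} \to \infty$ deterministically, and a quantity bounded in probability divided by a deterministic sequence tending to infinity tends to zero in probability; this gives $\sum_{z \notin B_t} u(t,z)/U(t) = o_p(1)$.

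For the sum over $B_t \setminus \{Z_t^{(1,\rho)}\}$, I would use the uniform statement in \Cref{thm:main1a}: for any fixed $\varepsilon \in (0,1)$, with probability tending to one, every $z \in B_t \setminus \{Z_t^{(1,\rho)}\}$ satisfies
\[
\frac{u(t,z)}{U(t)} \le \exp\left( -\frac{1-\varepsilon}{\gamma} |z - Z_t^{(1,\rho)}| \log \log t \right) = (\log t)^{-\frac{1-\varepsilon}{\gamma} |z - Z_t^{(1,\rho)}|}.
\]
Grouping by the $\ell^1$-distance $k = |z - Z_t^{(1,\rho)}|$ and using that the number of sites in $V_t$ at distance exactly $k$ from a given point is $O(k^{d-1})$, the sum is bounded by
\[
\sum_{k=1}^{\lceil r_t \kappa_t \rceil} C k^{d-1} (\log t)^{-\frac{1-\varepsilon}{\gamma} k},
\]
a geometric-type series dominated by its first term, which is of order $(\log t)^{-(1-\varepsilon)/\gamma} \to 0$. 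Combining the two bounds yields the corollary.

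No step here is really an obstacle, since \Cref{thm:main1} is engineered to give exactly this conclusion; the only small subtleties are verifying that $td_t\kappa_t \to \infty$ (which is built into the allowed choice of $\kappa_t$) and confirming that the polynomial site count at distance $k$ is absorbed by the super-polynomial decay in $k$ coming from the exponential shape. Both are routine.
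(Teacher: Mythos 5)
Your proof is correct. The paper does not actually supply a written proof of this corollary -- it is stated immediately after \Cref{thm:main1} with no proof environment, treated as an immediate consequence -- and the two-part decomposition you use (the mass on $B_t \setminus \{Z_t^{(1,\rho)}\}$ controlled via the uniform profile in \Cref{thm:main1}(\ref{thm:main1a}), the mass off $B_t$ controlled via the exponential tail bound in \Cref{thm:main1}(\ref{thm:main1b})) is precisely the intended derivation. The two points you flag as "small subtleties" are indeed the only ones worth checking, and you handle them correctly: $td_t$ grows essentially polynomially in $t$ so $td_t\kappa_t \to \infty$ for any admissible slowly-decaying $\kappa_t$, and the super-geometric factor $(\log t)^{-\frac{1-\varepsilon}{\gamma}k}$ easily absorbs the $O(k^{d-1})$ shell count so that the near sum is of order $(\log t)^{-(1-\varepsilon)/\gamma}$.
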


\begin{theorem}[Description of the localisation site]
\label{thm:main2}
As $t \to \infty$, the following hold:
\begin{enumerate}[(a)]
\item (Localisation distance)
\label{thm:main2a}
$$ \frac{Z_t^{(1, \rho)}}{r_t} \Rightarrow X \qquad \text{in law} $$
where $X$ is a random vector whose coordinates are independent and Laplace distributed random variables with absolute-moment one; 
\item (Local profile of the potential field) 
\label{thm:main2b} \\
For each $z \in B(Z_t^{(1, \rho)}, \rho )$ uniformly,
$$ \frac{\xi(z)}{a_t^{q(|z-Z_t^{(1, \rho)}|)}} \to 1 \qquad \text{in probability} \, ;$$
\item (Ageing of the localisation site)
\label{thm:main2c}
$$\frac{T^{(\rho)}_t}{t} \Rightarrow \Theta \qquad \text{in law} $$
where $\Theta$ is a nondegenerate almost surely positive random variable.
\end{enumerate}
\end{theorem}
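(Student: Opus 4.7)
All three parts flow from a single Poisson point process (PPP) convergence for the rescaled local-eigenvalue field. Consider the random measure on $\mathbb{R}^d \times \mathbb{R}$
$$\Pi_t := \sum_{z \in V_t} \delta_{\left(z/r_t,\ (\tilde{\lambda}_t^{(\rho)}(z) - a_t)/d_t\right)},$$
decorated with the mark $\xi^{(\rho)}(z) = \{\xi(y)\}_{y \in B(z,\rho)}$ at each point. The scales $r_t,a_t,d_t$ are calibrated so that (i) $\Pi_t$ converges vaguely on compact subsets of $\mathbb{R}^d \times \mathbb{R}$ to a PPP of intensity proportional to $dx \otimes e^{-y}\,dy$ (a consequence of the Weibull tail $g_\xi(x)=x^\gamma$ together with the sharp tail asymptotics of $\tilde{\lambda}_t^{(\rho)}$ developed earlier in the paper); and (ii) the penalty $(|z|/(\gamma t))\log\log t$ equals exactly $|x|\,d_t$ at $z=xr_t$, so that $(\tilde{\Psi}_t^{(\rho)}(z)-a_t)/d_t \to y-|x|$ after rescaling. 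Maximising $\tilde{\Psi}_t^{(\rho)}$ thus reduces in the limit to maximising $y-|x|$ over the limit PPP.

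\textbf{Parts (a) and (c).} Granted the above, $Z_t^{(1,\rho)}/r_t$ converges in law to the first coordinate of the argmax of $y-|x|$ over the limit PPP. A Palm computation shows that the argmax's joint density at $(x,y)$ is proportional to $e^{-y}\exp(-2^d e^{-(y-|x|)})$; integrating out $y$ yields the marginal $\propto e^{-|x|}$, so each coordinate is independent Laplace with density $\tfrac{1}{2}e^{-|x_i|}$ and absolute first moment one, proving (a). For (c), expanding $\log\log(t+s)/(\gamma(t+s))$ to leading order shows that the time-$(t+s)$ penalty corresponds, in the rescaled picture, to replacing $|x|$ by $|x|/(1+s/t)$ in the limit functional. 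The argmax of $y-|x|/(1+\sigma)$ is constant in $\sigma$ until the second-best point overtakes the leader, and the first overtaking parameter $\sigma^* > 0$ is a.s.\ positive because the limit PPP has an a.s.\ positive gap between its top two values. Hence $T_t^{(\rho)}/t \Rightarrow \sigma^* =: \Theta$, a positive random variable whose law is continuous because the joint law of the top two points of the PPP is continuous.

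\textbf{Part (b).} With the profile carried as a PPP mark, the limiting conditional law of $\xi^{(\rho)}$ at the argmax, given the eigenvalue, is characterised by minimising the large-deviation cost $\sum_{y \in B(0,\rho)} \xi(y)^\gamma$ subject to achieving a prescribed value of the local principal eigenvalue $\lambda$. Applying Hellmann-Feynman to $\tilde{\mathcal{H}}_\rho^{(z)}$ gives $\partial \lambda/\partial \xi(y) = \phi(y)^2/\|\phi\|_2^2$; since $\phi$ concentrates at the maximiser with $\phi(y) \sim a_t^{-|y|}$, the Lagrange condition $\gamma \xi(y)^{\gamma-1} \propto a_t^{-2|y|}$ yields $\xi(y) \sim a_t^{1 - 2|y|/(\gamma-1)} = a_t^{q(|y|)}$. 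The choice $\rho = \floor{(\gamma-1)/2}^+$ is precisely the largest $j$ for which this exponent remains positive, so beyond $\rho$ the optimiser reverts to the typical baseline $\xi \sim 1$ and does not contribute to the shape.

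\textbf{Main obstacle.} The delicate step is the mark-level convergence underlying (b): one must make the Hellmann-Feynman/Rayleigh-quotient expansion quantitative and uniform over the relevant range of profiles, and establish large-deviation estimates on the conditional law of $\xi^{(\rho)}$ given large $\tilde{\lambda}_t^{(\rho)}$ that simultaneously rule out sub-optimal profiles (too cheap but not delivering the required eigenvalue) and over-shot profiles (excessively costly). This requires a careful multi-scale analysis concentrated around the predicted optimiser $a_t^{q(j)}$ and constitutes the principal technical content of the proof; parts (a) and (c) are then essentially corollaries of the PPP limit.
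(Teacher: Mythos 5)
Your overall plan---point process convergence of the rescaled field $\tilde{\Psi}^{(\rho)}_t$ to a PPP with intensity $dx\otimes e^{-y-|x|}dy$, with (a) coming from the argmax marginal, (c) from the overtaking time of the two leading points under the $t\mapsto t+s$ reparametrisation, and (b) from conditioning on the profile near the argmax---is in essence the paper's route. Proposition \ref{prop:pp1}, Proposition \ref{prop:limit}, and the reduction to \cite{Sidorova12} for ageing correspond directly to your parts (a) and (c), so there is nothing to add there.

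The genuine divergence is in part (b). You propose to identify the profile $\xi(y)\sim a_t^{q(|y|)}$ by a Hellmann--Feynman/Lagrange computation: differentiate the local principal eigenvalue in the potential, plug in the eigenvector decay $\varphi(y)\sim a_t^{-|y|}$, and solve the first-order condition for minimising $\sum_y\xi(y)^\gamma$ at fixed eigenvalue. This gives the correct answer and is a clean heuristic, but you correctly flag that turning it into a proof requires a uniform concentration estimate for the conditional law of $\xi^{(\rho)}$ given a large $\tilde{\lambda}^{(\rho)}_t$. The paper handles this differently: it does not use Hellmann--Feynman at all, but instead writes the tail probability $\PP(\tilde{\lambda}^{(\rho)}_t(0)>C_t)$ as an explicit multi-dimensional integral over the neighbouring potential values via the finite path expansion (\Cref{prop:pathexpforlambda}), and then applies Laplace's method to that integral (\Cref{prop:asympt}). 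The key output is the second statement of \Cref{prop:asympt}: conditionally on a large local eigenvalue, the event $\bar{\mathcal{S}}^{(\rho)}_t(0)$ that the profile deviates from $a_t^{q(\cdot)}$ by a factor outside $(1-f_t,1+f_t)$ has vanishing probability, quantified by the Gaussian tail in \eqref{eq:gap}. The Laplace's-method route is more direct because the path expansion makes the map $\xi^{(\rho)}\mapsto\tilde{\lambda}^{(\rho)}_t$ explicit and polynomial to the relevant order, so one never has to control derivatives of an implicitly defined eigenvalue uniformly; whereas the Hellmann--Feynman route requires second-order control of the Rayleigh quotient across the whole admissible profile range, which is precisely the ``main obstacle'' you identify.

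One further technical gap: your proposal works entirely at radius $\rho$, but the paper's point process asymptotics and eigenvalue correspondences are naturally phrased at radius $j:=[\gamma/2]\in\{\rho,\rho+1\}$, since the path expansion must run to length $2j$ to resolve the eigenvalue to order $o(d_t)$. The paper therefore needs \Cref{lem:jtorho} and \Cref{corry:zeqz0} to establish that $Z_t^{(1,j)}=Z_t^{(1,\rho)}$ on the good event $\mathcal{E}_{t,c}$, so that results proved for the $j$-local functional transfer to the $\rho$-local one. Your proposal does not address why one may identify the argmaxes at the two radii; if $j=\rho+1$ this identification is nontrivial and depends on the fact that the extra annulus contributes only $o(d_t e_t)$ to the eigenvalue, which in turn relies on the profile bound from part (b) itself (through the definition of $\mathcal{S}_t^{(n)}$). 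You should be aware of this small circularity-avoidance step in any rigorous version.
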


\begin{corollary}[Ageing of the renormalised solution]
\label{cor:ageing}
For any sufficiently small $\varepsilon > 0$, as $t \to \infty$,
$$\frac{T^\varepsilon_t}{t} \Rightarrow \Theta \qquad \text{in law} $$
where 
$$ T^\varepsilon_t := \inf \left\{s > 0: \left| \frac{u(t, \cdot)}{U(t)} - \frac{u(t+s, \cdot)}{U(t+s)} \right|_{\ell^\infty} > \varepsilon \right\} $$
and $\Theta$ is the same almost surely positive random variable as in Theorem \ref{thm:main2}.
\end{corollary}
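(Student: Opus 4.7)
The plan is to deduce Corollary \ref{cor:ageing} from complete localisation (Corollary \ref{cor:cl}) together with the ageing of the localisation site (Theorem \ref{thm:main2}\ref{thm:main2c}). Writing $\pi_r(\cdot) := u(r, \cdot)/U(r)$ for the renormalised solution at time $r$, the functional appearing in the definition of $T^\varepsilon_t$ is $|\pi_t - \pi_{t+s}|_{\ell^\infty}$. The strategy is to show that this quantity closely tracks the indicator distance $|\id_{\{Z_t^{(1, \rho)}\}} - \id_{\{Z_{t+s}^{(1, \rho)}\}}|_{\ell^\infty}$, which takes only the values $0$ and $1$ according to whether the localisation site has switched between times $t$ and $t+s$.

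The first step is to observe that, since $\pi_r$ is a probability measure on $\mathbb{Z}^d$, the convergence $\pi_r(Z_r^{(1,\rho)}) \to 1$ in probability from Corollary \ref{cor:cl} automatically forces $\max_{z \neq Z_r^{(1,\rho)}} \pi_r(z) \to 0$ in probability, giving
$$ \big| \pi_r - \id_{\{Z_r^{(1, \rho)}\}} \big|_{\ell^\infty} \longrightarrow 0 \quad \text{in probability.} $$
Applying this at both $r = t$ and $r = t + s$ and using the triangle inequality then yields
$$ \big| \, |\pi_t - \pi_{t+s}|_{\ell^\infty} - |\id_{\{Z_t^{(1,\rho)}\}} - \id_{\{Z_{t+s}^{(1,\rho)}\}}|_{\ell^\infty} \,\big| \longrightarrow 0 \quad \text{in probability.} $$
For any fixed $\varepsilon \in (0, 1)$ the crossing event $\{ |\pi_t - \pi_{t+s}|_{\ell^\infty} > \varepsilon\}$ thus coincides, up to a vanishing null event, with the switching event $\{ Z_t^{(1,\rho)} \neq Z_{t+s}^{(1,\rho)}\} = \{ s \geq T^{(\rho)}_t \}$. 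Consequently $T^\varepsilon_t / T^{(\rho)}_t \to 1$ in probability, and combining with $T^{(\rho)}_t / t \Rightarrow \Theta$ from Theorem \ref{thm:main2}\ref{thm:main2c} via Slutsky's theorem concludes the proof.

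The main obstacle is that complete localisation, as stated, is a pointwise-in-$r$ convergence in probability, whereas the definition of $T^\varepsilon_t$ involves a continuum of $s$. One must upgrade the pointwise statement to uniformity of $\pi_r \approx \id_{\{Z_r^{(1,\rho)}\}}$ over $r \in [t, t + Ct]$, and in particular handle the continuous transition of $\pi_r$ between the old and new localisation sites just after $r = t + T^{(\rho)}_t$, showing this transition has duration $o(t)$. I would address this by applying Corollary \ref{cor:cl} along a discrete grid $\{t(1 + k/N)\}_{k=0}^{\lceil CN \rceil}$, taking $N \to \infty$ after $t \to \infty$, and interpolating by the parabolic continuity of $\pi_r$ in $r$. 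The lower bound $T^\varepsilon_t \geq T^{(\rho)}_t(1 - o_{\PP}(1))$ is immediate from the displayed inequality, but the matching upper bound additionally requires that the limit law of $T^{(\rho)}_t / t$ place no atom at the switch — a property inherited from the construction of $\Theta$ in Theorem \ref{thm:main2}\ref{thm:main2c} — so that the boundary window of width $o(t)$ carries probability tending to zero.
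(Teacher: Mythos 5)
Your high-level strategy matches the paper's, which notes that \Cref{cor:ageing} follows from \Cref{cor:cl} together with part~(\ref{thm:main2c}) of \Cref{thm:main2}, and defers the technical details to \cite{Morters11}[Proposition 2.1]. The observation that complete localisation forces
$|\pi_r - \id_{\{Z_r^{(1,\rho)}\}}|_{\ell^\infty} \to 0$ in probability (since $\pi_r$ is a probability mass function), and the triangle-inequality reduction of $|\pi_t - \pi_{t+s}|_{\ell^\infty}$ to the indicator distance, are both correct as pointwise-in-$(s/t)$ statements.

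The leap from that pointwise statement to ``the crossing event coincides with the switching event, hence $T^\varepsilon_t/T^{(\rho)}_t \to 1$'' is where a genuine gap remains, and the remedy you propose does not close it. A minor issue first: $\{Z_t^{(1,\rho)} \neq Z_{t+s}^{(1,\rho)}\} = \{s \geq T^{(\rho)}_t\}$ is only a containment, not an equality --- the site could in principle switch and switch back --- so one should work with the event $\{Z_t^{(1,\rho)} = Z_{t+\omega t}^{(1,\rho)}\}$ directly, as in the proof of part~(\ref{thm:main2c}), or separately show that switchbacks are negligible. More substantively, the ``discrete grid plus parabolic continuity'' device does not supply the required uniform control: applying \Cref{cor:cl} at the finitely many grid points $t(1+k/N)$ and then interpolating needs a modulus of continuity for $\pi_r$ that is uniform on intervals of length $t/N$, but $\pi_r$ can move as fast as the instantaneous spectral gap allows, and precisely near the switching time the relevant gap closes, so no $t$-independent modulus is available. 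What is actually needed --- and what the reference supplies --- is quantitative control over the relative masses of the two competing sites (essentially the content of \Cref{thm:aux}(\ref{thm:aux1}) carried along the interval of times), which shows that the mass-transfer window has duration $o(t)$; atomlessness of $\Theta$ alone does not yield that estimate. As written, your sketch asserts the $o(t)$ transition bound without deriving it, and that is the missing ingredient.
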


\begin{remark}
\label{remark:intuition}
The localisation site $Z_t^{(1, \rho)}$ is the maximiser of the penalisation functional $\tilde{\Psi}^{(\rho)}_{t}(z)$, which balances the magnitude of the $\rho$-local principal eigenvalue at a site with the distance of that site from the origin. Heuristically, this may be explained as the solution favouring sites with high local principal eigenvalue but being `penalised' for diffusing too quickly. 

As claimed, $\tilde{\Psi}_t^{(\rho)}(z)$ depends only on the set $\xi^{(\rho)}(z)$ and on the distance $|z|$. Indeed, in order to determine $Z_t^{(1, \rho)}$ explicitly, a finite path expansion is available for $\tilde{\lambda}_t^{(\rho)}(z)$ (see \Cref{prop:pathexpforlambda} for a precise formulation):
\begin{align*}
\tilde{\lambda}_t^{(\rho)}(z) = \xi(z) + \sum_{2 \leq k \leq 2j} \sum_{\Gamma^\ast_{k}(z, \hat \rho)} \prod_{0 < i < k} \frac{1}{\tilde{\lambda}_t^{(\rho)}(z) - \tilde{\xi} \id_{B(z, \rho) }(y_i) } + o(d_t)  
\end{align*}
where $j :=
 [\gamma/2] \in \{\rho, \rho+1\} $  and $\Gamma^\ast_{k}(z, \hat \rho)$ is the set of all length $k$ nearest neighbour paths  
$$ z =: y_0 \to y_1 \to \ldots \to y_{k} := z \quad \text{in } B(z, \hat \rho)$$
such that $y_i \neq z$ for all $0 < i < k$. This path expansion can be iteratively evaluated to get an expression for $\tilde{\lambda}_t^{(\rho)}(z)$ as an explicit function of $\xi^{(\rho)}(z)$. Note that $j$ is chosen precisely to be the smallest non-negative integer such that $a_t^{-2j-1} = o(d_t)$, which ensures that paths with more than $2j$ steps contribute at most $o(d_t)$ to the sum.  Since we show in \Cref{sec:pp} that the gap between the maximisers of $\tilde{\Psi}^{(\rho)}_t$ is on the scale $d_t$, such an expression is sufficient to determine $Z_t^{(1, \rho)}$.
\end{remark}

\begin{remark}
Our limit theorem for the profile of the renormalised solution  holds within a distance $r_t \kappa_t$ of the localisation site, where $\kappa_t$ may be chosen to decay arbitrarily slowly. At or beyond this scale, the profile will be interrupted by `bumps' in the renormalised solution around other high values of the functional $\tilde{\Psi}^{(\rho)}_{t}$, which occur at distances on the scale $r_t$. In this region, we simply bound the renormalised solution by the height of these bumps, although we also expect a weaker global exponential decay to hold.
\end{remark}

\begin{remark}
The ageing of the renormalised solution in Corollary \ref{cor:ageing} is a natural consequence of complete localisation of the renormalised solution (Corollary \ref{cor:cl}) and the ageing of the localisation site (Theorem \ref{thm:main2}). The proof of this result is essentially the same as in \cite{Morters11}[Proposition 2.1] for the corresponding result in the case of Pareto potential field; we defer to that paper for the proof. Note also that Corollary \ref{cor:ageing} is a quenched ageing result along the lines of \cite{Morters11}, as opposed to the annealed (i.e.\ averaged over all realisations of the random environment) ageing studied in \cite{Gartner11}.
\end{remark}

\begin{remark}
Recall that it was previously shown in \cite{Sidorova12} that complete localisation holds in the case $\gamma < 2$. The analysis in that paper is broadly similar to ours, but uses the penalisation functional
$$ \Psi_t^{\ast}(z) := \xi(z) - \frac{|z|}{\gamma t} \log \log t $$ 
which equals $\tilde{\Psi}^{(\rho)}_t(z)$ in the special case $\gamma < 3$. This restricts the validity of the analysis to where there is an exact correspondence between the top order statistics of the fields $\xi$ and $\tilde{\lambda}_t^{(\rho)}$ in $V_t$. Clearly this holds for $\gamma < 3$ by definition. On the other hand, the exact correspondence has been shown to be false if $\gamma \geq 3$ (in \cite{Astrauskas13}), and so an analysis based on the functional $\Psi_t^\ast$ fails in that case.
\end{remark}

\begin{remark}
We briefly mention the strong possibility that our results can be extended to the case of fractional-double-exponential potential field, i.e.\ where $g_\xi(x) = e^{x^\chi}$ for some $\chi < 1$. The main difference in that case is that the radius of influence $\rho$ grows with $t$, which presents a technical difficulty in extending the results in \Cref{prop:asympt}. Nevertheless, we strongly believe such an extension is valid, and since the rest of our proof holds essentially unchanged, we expect complete localisation to also hold in the fractional-double-exponential case.
\end{remark}

The paper is organised as follows. In \Cref{sec:outline} we give an outline of the proof, and establish \Cref{thm:main1} subject to an auxiliary \Cref{thm:aux}. In \Cref{sec:prelim} we establish some preliminary results. In \Cref{sec:pp} we use a point process approach to study the random variables $Z_t^{(1, \rho)}$ and $\tilde{\Psi}^{(\rho)}_{t}(Z_t^{(1, \rho)})$ (and generalisations thereof), and in doing so complete the proof of \Cref{thm:main2}. In \Cref{sec:spec} we collect results from spectral theory that we will apply in \Cref{sec:completion}. In \Cref{sec:completion} we complete the proof of the auxiliary \Cref{thm:aux}.

\section{Outline of the Proof}
\label{sec:outline}
In the literature, the usual approach to study $u(t, \cdot)$ is with probabilistic methods via the Feynman-Kac representation (for instance, in \cite{Gartner07}). Our primary approach is different, applying spectral theory methods to the Hamiltonian $\mathcal{\bar{H}}$ (as is done in \cite{Astrauskas07}, for instance). We note, however, that these approaches are very similar, and we do at times make use of the Feynman-Kac representation.

\subsection{Spectral representation of the solution}
The basic idea that underlies our proof is that the solution $u(t, \cdot)$ is well-approximated by a spectral representation in terms of the eigenfunctions of the Hamiltonian $\mathcal{\bar{H}}$ restricted to a suitably chosen domain. It turns out that this spectral representation is asymptotically dominated by just one eigenfunction, which is eventually localised with exponential decay away from the localisation site.

In order to apply this idea, we restrict $\mathcal{\bar{H}}$ to the macrobox $V_t$ (i.e.\ with periodic boundary conditions, recalling that $V_t$ is a torus), on which the solution $u(t, \cdot)$ turns out to be essentially concentrated. So let $u_{V_t}(s, z)$ be the solution to the PAM restricted to $V_t$, that is, defined by the Hamiltonian $\mathcal{H} :=  \Delta_{V_t} + \xi$, with $u_{V_t}(s, z): = 0$ outside $V_t$ by convention, and let $U_{V_t}(t):= \sum_{z \in V_t} u_{V_t}(t, z)$.

\begin{proposition}[Correspondence between $u_{V_t}(t, z)$ and $u(t, z)$]\label{prop:box}
As $t \to \infty$ and for any $z$,
$$ |u_{V_t}(t, z) - u(t, z)| = o\left(e^{-R_t} \right) \qquad \text{and} \qquad |U_{V_t}(t) - U(t)| = o\left(e^{-R_t} \right) \, ,$$
where both hold almost surely.
\end{proposition}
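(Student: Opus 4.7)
The plan is to compare both solutions via the Feynman--Kac representation and to couple the walks on $\mathbb{Z}^d$ and on the torus $V_t$ so that they agree until the first exit of $V_t$. Writing $(X_s)_{s \geq 0}$ for continuous-time simple random walk on $\mathbb{Z}^d$ started at the origin with jump rate $1$ to each neighbour, and $\pi: \mathbb{Z}^d \to V_t$ for the natural quotient (so that $\pi(X_s)$ is the corresponding walk on the torus), the standard Feynman--Kac formula yields
\begin{align*}
u(t,z) &= e^{2dt}\, \EE_0\!\left[\exp\!\left(\int_0^t \xi(X_s)\,ds\right) \id\{X_t = z\}\right], \\
u_{V_t}(t,z) &= e^{2dt}\, \EE_0\!\left[\exp\!\left(\int_0^t \xi(\pi(X_s))\,ds\right) \id\{\pi(X_t) = z\}\right].
\end{align*}
On the event $\{\tau > t\}$, where $\tau := \inf\{s \geq 0 : X_s \notin V_t\}$, one has $\pi(X_s) = X_s$ throughout $[0,t]$, so integrands and indicators match and the two expressions coincide. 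Hence
\[ |u(t,z) - u_{V_t}(t,z)| \;\leq\; 2 e^{2dt}\, \EE_0\!\left[\exp\!\left(\int_0^t \xi(X_s) \vee \xi(\pi(X_s))\,ds\right) \id\{\tau \leq t\}\right]. \]

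Next I would establish an almost sure envelope for $\xi$ near the walk. Borel--Cantelli applied to the Weibull tail $\PP(\xi(0) > x) = e^{-x^\gamma}$ gives, almost surely, a (random, finite) $N_0$ such that $\max_{|y| \leq N} \xi(y) \leq (C\log N)^{1/\gamma}$ for every $N \geq N_0$ and some $C = C(d,\gamma)$. Since $|X_s|$ is bounded by the total jump count $J_t \sim \mathrm{Poisson}(2dt)$, the exponent above is at most $t(C\log J_t)^{1/\gamma}$ on $\{J_t \geq N_0\}$. Exit from $V_t$ requires $J_t \geq R_t$, so conditioning on $J_t$ and using Stirling's bound $k! \geq (k/e)^k$ gives
\[ \EE_0\!\left[\,\cdots\,\id\{\tau \leq t\}\right] \;\leq\; \sum_{k \geq R_t} \left(\tfrac{2edt}{k}\right)^{\!k}\exp\!\left(t(C\log k)^{1/\gamma}\right). \]
For $k = R_t = t(\log t)^{1/\gamma}$ the first factor is $\exp\!\left(-R_t(\log\log t)/\gamma + O(R_t)\right)$ while the second is $\exp(O(R_t))$; the ratio of successive terms tends to $0$, so the whole sum is dominated by its first term and is at most $\exp\!\left(-R_t(\log\log t)/(2\gamma)\right)$ for $t$ large. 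Since $e^{2dt} = o(e^{R_t})$, this yields $|u(t,z) - u_{V_t}(t,z)| = o(e^{-R_t})$ almost surely.

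The total-mass statement follows from the triangle inequality
\[ |U(t) - U_{V_t}(t)| \;\leq\; \sum_{z \in V_t}|u(t,z) - u_{V_t}(t,z)| \;+\; \sum_{z \notin V_t} u(t,z), \]
where the first sum has $|V_t| = O(R_t^d)$ pointwise-controlled terms (the polynomial prefactor is absorbed by $\exp(-R_t \log\log t)$), and the second is bounded by $e^{2dt}\EE_0[\exp(\int \xi\,ds)\,\id\{\tau \leq t\}]$, to which the same Poisson-tail estimate applies. The main obstacle is the joint control of the Feynman--Kac exponent and the exit event: $\xi$ can be arbitrarily large at sites far from the origin, but the Borel--Cantelli envelope caps the accumulated potential along exiting paths at $O(R_t)$, which is then comfortably beaten by the super-polynomial combinatorial cost $\exp(-R_t(\log\log t)/\gamma)$ of forcing $R_t$ jumps in time $t$.
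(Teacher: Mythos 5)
Your proposal is correct and follows essentially the same route as the paper: both proofs invoke the Feynman--Kac representation, observe that the contribution from walks that never leave $V_t$ cancels exactly, and then bound the exit-event remainder by combining an almost sure envelope for $\max_{|y|\le N}\xi(y) \lesssim (\log N)^{1/\gamma}$ with a Poisson/Stirling tail estimate on the number of jumps, showing that the combinatorial cost $\exp(-R_t(\log\log t)/\gamma + O(R_t))$ swamps both the potential integral and the prefactor $e^{2dt}$. The paper phrases the decomposition in terms of the maximal $\ell^\infty$ excursion $n = \max_{s<t}|X_s|_{\ell^\infty}$ and sums $U^n + U^n_{V_t}$ over $n\geq R_t$, while you phrase it via the exit time $\tau$ and condition on the jump count $J_t$; these are the same estimate in slightly different notation.
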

\begin{remark}Since the error in \Cref{prop:box} is of lower order than the
  bounds in \Cref{thm:main1}, it will be sufficient to prove that
  \Cref{thm:main1} holds for $u_{V_t}(t, \cdot)$. \Cref{prop:box} is proved in
  \Cref{sec:prelim}.
\end{remark}

Denote by $\lambda_{t, i}$ and $\varphi_{t, i}$ the $i$'th largest eigenvalue and corresponding eigenvector of $\mathcal{H}$, with each $\varphi_{t, i}$ taken to be $\ell^2$-normalised with $\varphi_{t, i}(z):=0$ outside $V_t$ by convention. Since $V_t$ is bounded, the solution $u_{V_t}(t, \cdot)$ permits a spectral representation in terms of the eigenfunctions of $\mathcal{H}$: 
\begin{align}
\label{def:specrep}
u_{V_t}(t, \cdot) = \sum_{i=1}^{|V_t|} e^{t \lambda_{t, i} } \varphi_{t,i}(0) \varphi_{t,i}(\cdot) \,.
\end{align}
Define a functional $\Psi_{t}: \{1, 2, \ldots , |V_t|\} \to \mathbb{R} \cup \{-\infty\}$ by
$$ \Psi_{t}(i) := \lambda_{t, i} + \frac{\log |\varphi_{t, i}(0)|}{t} $$
and remark that this is chosen so that the magnitude of the $i$'th term in the sum in \cref{def:specrep} is $e^{t \Psi_{t}(i)} |\varphi_{t,i}(\cdot)|$, using the convention that $\exp\{ -\infty \} := 0$.

We refer to $\{\Psi_{t}(\cdot)\}$ as the \textit{penalised spectrum}, noting
that it represents a trade-off between the magnitude of the eigenvalue and the
(absolute) magnitude of the eigenvector at the origin; the intuition here is the
same as in \Cref{remark:intuition}. We prove that, with overwhelming probability, a gap exists between the largest two values in the penalised spectrum, which implies that the spectral representation in \cref{def:specrep} is dominated by just one eigenfunction. Moreover, we prove that this eigenfunction is eventually localised at $Z_t^{(1, \rho)}$. To make this precise, let $\maxIndex := \argmax_i \Psi_{t}(i)$ and $\secIndex := \argmax_{i \neq \maxIndex} \Psi_{t}(i) $, and abbreviate $\maxPhi := \varphi_{t, \maxIndex}$ and $\maxLambda := \lambda_{t, \maxIndex}$ for notational convenience. Moreover, introduce auxiliary scaling functions $f_t, h_t, e_t \to 0$ and $g_t \to \infty$ as $t \to \infty$ such that 
\begin{align*}
\max \{ 1 / \log \log t , \kappa_t \} \ll f_t h_t \ll f_t \ll h_t \ll e_t/g_t
\end{align*}
where $a_t \ll b_t$ is notational shorthand for $a_t = o(b_t)$.

\begin{theorem}[Auxiliary theorem]
\label{thm:aux}
As $t \to \infty$, the following hold:
\begin{enumerate}[(a)]
\item \label{thm:aux1}
(Gap in the penalised spectrum)
$$ \PP \left( \Psi_t(\maxIndex) - \Psi_t(\secIndex) > d_t e_t \right) \to 1 \, ;$$
\item \label{thm:aux2}
(Profile of the dominating eigenfunction)
\begin{enumerate}[(i)]
\item The sets $B_t$ and $V_t$ satisfy
$$ \PP(B_t \subseteq V_t ) \to 1 \, ;$$
\item For each $z \in B_t$ uniformly,
$$ \frac{ \log \maxPhi(z) }{\frac{1}{\gamma} |z - Z_t^{(1, \rho)}| \log \log t} \to - 1 \qquad \text{in probability} \, ;$$ 
\item Moreover,
\\ $$ e^{t d_t \kappa_t} \sum_{z \in V_t \setminus B_t} |\maxPhi(z)| \qquad \text{is bounded in probability} \,.$$ 
\end{enumerate}

\end{enumerate}
\end{theorem}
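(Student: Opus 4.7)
The strategy is to build a near-bijection between the penalised spectrum $\{\Psi_t(i)\}_{1 \leq i \leq |V_t|}$ and the values of the penalisation functional $\{\tilde{\Psi}_t^{(\rho)}(z)\}_{z \in V_t}$ restricted to sites $z$ at which $\tilde{\lambda}_t^{(\rho)}(z)$ lies near the top of its order statistics. Once this is established, the point-process results of \Cref{sec:pp} --- which give that the gap between the two largest values of $\tilde{\Psi}_t^{(\rho)}$ is of order $d_t$ in probability --- transfer directly to the penalised spectrum, yielding part~(a) and identifying $\maxPhi$ as the eigenfunction localised near $Z_t^{(1, \rho)}$.

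Concretely, my first step would be, for each $z$ with sufficiently large $\tilde{\lambda}_t^{(\rho)}(z)$, to produce a unique eigenpair $(\lambda_{t, i(z)}, \varphi_{t, i(z)})$ of $\mathcal{H}$ such that
\[ \lambda_{t, i(z)} = \tilde{\lambda}_t^{(\rho)}(z) + o(d_t e_t) \]
with $\varphi_{t, i(z)}$ spatially concentrated near $z$. The natural trial functions are the principal eigenfunctions of the $L_t$-punctured local Hamiltonians $\tilde{\mathcal{H}}_n^{(z)}$ from \cref{eq:defH}: puncturing at level $L_t$ ensures both that these trial eigenfunctions are exponentially localised inside $B(z, n)$ and that their principal eigenvalue is well-separated from the rest of the local spectrum, so a min-max argument combined with the spectral results of \Cref{sec:spec} should yield the matching, with the punctured sites acting as effective barriers between trial eigenfunctions centred at distinct~$z$. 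My second step would then evaluate $\varphi_{t, i(z)}(0)$ via an iterated resolvent (Dyson) expansion on the macrobox, summing over nearest-neighbour paths from $z$ to the origin: since $\lambda_{t, i(z)}$ is of order $a_t = (d\log t)^{1/\gamma}$, each step contributes a factor of order $a_t^{-1}$, so
\[ \log |\varphi_{t, i(z)}(0)| = -|z|\log a_t\,(1+o(1)) = -\tfrac{|z|}{\gamma}\log\log t\,(1+o(1)), \]
provided the paths avoid the exceedance sites in $\Pi^{(L_t)}$, which is the generic case since such sites are spatially rare in $V_t$. Combined with the first step this yields $\Psi_t(i(z)) = \tilde{\Psi}_t^{(\rho)}(z) + o(d_t e_t)$, and part~(a) then follows directly from the $d_t$-scale gap for $\tilde{\Psi}_t^{(\rho)}$.

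For part~(b), subpart~(i) is immediate from \Cref{thm:main2}: $|Z_t^{(1, \rho)}|/r_t$ is tight and $r_t \kappa_t \ll R_t$, so $B_t \subseteq V_t$ with probability tending to one. For~(ii), the same Dyson expansion, now applied to paths starting at $Z_t^{(1, \rho)}$ and ending at $z \in B_t$, gives $\log|\maxPhi(z)| \sim -\tfrac{1}{\gamma}|z - Z_t^{(1,\rho)}|\log\log t$. For~(iii), the exponential decay yields $|\maxPhi(z)| \lesssim \exp(-r_t \kappa_t \log\log t/\gamma) = \exp(-t d_t \kappa_t)$ for $z \notin B_t$; multiplying by $e^{t d_t \kappa_t}$ and noting that $\kappa_t$ may be chosen to decay slowly enough that $t d_t \kappa_t$ dominates $\log|V_t|$, the tail sum remains bounded in probability.

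The main obstacle is the spectral matching in the first step: because the unpunctured $\xi$ contains very high peaks whose neighbourhoods host competing localised eigenfunctions, one must delicately control how much the eigenfunction centred at $z$ leaks towards these other peaks and vice-versa. The puncturing level $L_t$ and local radius $n$ (ultimately $n = \rho$) must be tuned so that the residual off-diagonal coupling between trial eigenfunctions is negligible at the scale $d_t e_t$ demanded by part~(a), and achieving this uniformly over all top-order sites will be where most of the technical work lies.
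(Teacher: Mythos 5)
Your overall strategy --- relate the penalised spectrum $\Psi_t$ to the functional $\tilde{\Psi}^{(\rho)}_t$ and transfer the gap from the point-process analysis --- is indeed the paper's approach, and subparts (b)(i) and (b)(iii) are handled essentially as you describe. However, the central claimed approximation
\[
\Psi_t(i(z)) = \tilde{\Psi}^{(\rho)}_t(z) + o(d_t e_t)
\]
is not achievable as a two-sided estimate uniformly over the top-order sites, and this is where your argument breaks. The upper bound on $\log|\varphi_{t,i}(0)|$ that the spectral theory delivers carries an irreducible additive correction of size $O(|z_{t,i}|)$ (see \Cref{corry:upperbound}), and the corresponding lower bound (\Cref{prop:lowerbound}, \Cref{corry:lowerboundAtZero}) is only proved for indices satisfying the geometric condition of \Cref{assumpt:A}, which one cannot verify for all high indices simultaneously. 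The paper copes with this asymmetry by introducing the shifted functionals $\tilde{\Psi}^{(j)}_{t,c}$ and proving the $d_t$-scale gap for both $c = 0$ and the specific $c$ arising from the upper bound (event $\mathcal{E}_{t,c}$, \Cref{corry:eventj}, and Lemmas~\ref{lem:lowerbound}--\ref{lem:zeqz}): the lower bound with the plain functional $\tilde{\Psi}^{(j)}_{t,0}$ is applied only to the candidate index $k_t$ with $z_{t,k_t} = Z^{(1,\rho)}_t$, while the $c$-shifted upper bound is applied to the competitor $\secIndex$, and the point-process gap in the $c$-shifted functional closes the argument. Your proposal does not anticipate the need for the shifted family of functionals nor for the one-sided nature of the eigenfunction bounds at zero.

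There is a second concrete gap. You justify the Dyson-expansion estimate $\log|\varphi_{t,i(z)}(0)| \sim -\tfrac{|z|}{\gamma}\log\log t$ by asserting that the contributing paths ``avoid the exceedance sites in $\Pi^{(L_t)}$, which is the generic case since such sites are spatially rare.'' This is not correct: the typical separation of sites in $\Pi^{(L_t)}$ is of order $|V_t|^{(1-2\theta)/d} \asymp R_t^{1-2\theta}$, while $|z| \asymp r_t$, so a path from $z$ to the origin typically crosses a number of exceedance sites growing like a positive power of $t$. Moreover, a summand in the expansion involving a site $u \in \Pi^{(L_t)}$ with $\tilde\lambda_t(u)$ close to or above $\lambda_{t,i}$ need not be small --- it may be large and of the wrong sign --- so the exceedance sites cannot be dismissed as negligible. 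The paper handles exactly this difficulty via the cluster expansion of \Cref{lem:clusterExpansion}, the positivity result \Cref{prop:positiveContribution} (which requires $u \notin H(\lambda_{t,i})$), and \Cref{assumpt:A} (which guarantees that exceedance sites closer to the origin than $z_{t,i}$ do not carry principal eigenvalues exceeding $\lambda_{t,i}$). Without this machinery, the lower bound on $\varphi_{t,i}(0)$ that your step~2 asserts simply does not follow from the path expansion. You do flag this as ``the main obstacle'' in your last paragraph, but the obstacle is more severe than a tuning issue: the quantitative approximation you want to establish is false in the form you stated it, and the argument must be restructured around one-sided bounds and the shifted functionals.
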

In \Cref{subsec:finish} immediately below we finish the proof of
\Cref{thm:main1} subject to the auxiliary \Cref{thm:aux}; the other sections of
the paper are dedicated to proving Theorems~\ref{thm:main2} and~\ref{thm:aux}. 

Our proof of \Cref{thm:aux} is based on the observation that $\Psi_t(i)$ is
asymptotically approximated by $\tilde{\Psi}^{(\rho)}_{t}(z_{t, i})$, where
$z_{t, i} := \argmax_{z} \varphi_{t, i}(z)$. This is useful, since it is simpler
to study the maximisers of $\tilde{\Psi}^{(\rho)}_{t}$ than it is to analyse
$\Psi_t(\maxIndex) - \Psi_t(\secIndex)$ directly. Using a point process
approach, we demonstrate a gap between the top two maximisers of
$\tilde{\Psi}^{(\rho)}_{t}$ (and generalisations thereof), and also describe the
location and the neighbouring potential field of the maximiser $Z_t^{(1, \rho)}$,
proving \Cref{thm:main2}. We then establish the validity of the approximation, which requires both a correspondence between eigenvalues and local principal eigenvalues, and an analysis of the decay of eigenfunctions, in particular finding bounds on the value of eigenfunctions at zero; here we draw heavily on the methods in \cite{Astrauskas07} and \cite{Astrauskas08}.

\subsection{Proof of \Cref{thm:main1} subject to the auxiliary \Cref{thm:aux}}
\label{subsec:finish}
Starting from the spectral representation in (\ref{def:specrep}), we pull out the term involving the maximising index $\maxIndex$, and bound the remainder in the $\ell^1$-norm:
\begin{align*}
\left| \frac{u_{V_t}(t, \cdot)}{e^{t \maxLambda} \maxPhi(0)} -  \maxPhi(\cdot) \right|_{\ell^1} & = \left| \sum_{\substack{i = 1 \\ \ \ i \neq \maxIndex}}^{|V_t|} \frac{e^{t \lambda_{t, i}} \varphi_{t, i}(0)}{e^{t \maxLambda} \maxPhi(0)} \varphi_{t,i}(\cdot) \right|_{\ell^1}
\\ & \leq  \sum_{\substack{i = 1 \\ \ \ i \neq \maxIndex}}^{|V_t|}  \exp \left\{ t \left( \Psi_{t}(i) -  \Psi_t(\maxIndex) \right) \right\} \left| \varphi_{t,i}(\cdot) \right|_{\ell^1}  \,.
\end{align*}
Bounding each $\left| \varphi_{t,i}(\cdot) \right|_{\ell^1}$ by the Cauchy-Schwarz inequality and each summand by the maximum gives
\begin{align*}
\left| \frac{u_{V_t}(t, \cdot)}{e^{t \maxLambda} \maxPhi(0)} -  \maxPhi(\cdot) \right|_{\ell^1} & \leq |V_t|^\frac{3}{2} \exp \left\{ t \left( \Psi_{t}(\secIndex) -  \Psi_t(\maxIndex) \right) \right\} \,.
\end{align*}
and so, applying part~(\ref{thm:aux1}) of \Cref{thm:aux}, eventually with overwhelming probability 
\begin{equation}
\label{eq:final1}
\left| \frac{u_{V_t}(t, \cdot)}{e^{t \maxLambda} \maxPhi(0)} - \maxPhi(\cdot) \right|_{\ell^1} < |V_t|^\frac{3}{2} \exp \left\{ -t d_t e_t \right\} \,.
\end{equation}
By the triangle inequality, this implies that
$$ \left| \frac{U_{V_t}(t)}{e^{t \maxLambda} \maxPhi(0)} - \sum_{z \in V_t} \maxPhi(z)\right| <  |V_t|^\frac{3}{2} \exp \{ -t d_t e_t \} $$
and so, applying part~(\ref{thm:aux2}) of \Cref{thm:aux} we have that
\begin{equation}
\label{eq:final2}
e^{t \maxLambda} \maxPhi(0) = U_{V_t}(t)(1 + o(1)) \,.
\end{equation}

Consider now any $z \in B_t$. Combining part~(\ref{thm:aux2}) of \Cref{thm:aux} with equations (\ref{eq:final1}) and (\ref{eq:final2}) we have that, with overwhelming probability
\begin{align*}
\frac{u_{V_t}(t, z)}{U_{V_t}(t)} &= \frac{u_{V_t}(t, z)}{e^{t \maxLambda} \maxPhi(0)}(1+o(1)) \\
&= \exp \left\{ -\frac{1}{\gamma} |z-k_t^{(1)}| \log \log t \, (1+o(1)) \right\}(1+o(1))
\end{align*}
where $o(1)$ does not depend on $z$, recalling that $|z-Z_t^{(1, \rho)}| \log \log t =
o(t d_t e_t)$ for $z \in B_t$ since $h_t = o(e_t)$. Remark that the
correspondence in \Cref{prop:box} implies that, for any $z$ and with overwhelming probability,
\begin{align*}
\left| \frac{u(t, z)}{U(t)} -  \frac{u_{V_t}(t, z)}{U_{V_t}(t)} \right| & \leq  \frac{1}{U(t)} \left(  |u(t, z) - u_{V_t}(t, z)| + \frac{u_{V_t}(t, z)}{U_{V_t}(t)} |U(t) - U_{V_t}(t)| \right) \\
&= o(\exp \{ -R_t \}) = o(\exp \{-t d_t e_t \} )\,.
\end{align*}
and so, putting these together, we have
$$ \log \left( \frac{u_{V_t}(t, z)}{U_{V_t}(t)} \right) =  -\frac{1}{\gamma}
|z-Z_t^{(1, \rho)}| \log \log t \, (1+o(1)) $$
where $o(1)$ does not depend on $z$, which proves part~(\ref{thm:main1a}) of \Cref{thm:main1}.

On the other hand, combining part~(\ref{thm:aux2}) of \Cref{thm:aux} with \Cref{prop:box} and equation (\ref{eq:final2}), we have that
\begin{align*}
e^{t d_t \kappa_t} \sum_{z \notin B_t} \frac{u(t, z)}{U(t)} &< e^{t d_t \kappa_t} \sum_{z \in V_t \setminus B_t} \frac{u(t, z)}{U(t)} + o(1) \\
 & < e^{t d_t \kappa_t } \left( \sum_{z \in V_t \setminus B_t} \frac{u_{V_t}(t, z)}{e^{t \maxLambda} \maxPhi(0)}\right)(1+o(1)) 
\end{align*}
which is bounded in probability. \Cref{thm:main1} is proved.
\qed

\section{Preliminaries}
\label{sec:prelim}
In this section we establish some preliminary results. Denote by $\xi_{t, i}$ the $i$'th highest value of $\xi$ in $V_t$.

\begin{lemma}[Almost sure asymptotics for $\xi$]
\label{lem:asforxi}
For any $a_1 \in [0,1)$ and $a_2 \in (0,1]$, 
$$\xi_{t, \floor{|V_t|^{a_1}}} \sim L_{t, a_1} \qquad \text{and} \qquad |\Pi^{(L_{t, a_2})}| \sim |V_t|^{a_2} $$
hold almost surely.
\end{lemma}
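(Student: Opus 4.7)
\emph{Plan.} I would prove the two statements in tandem, exploiting the order-statistic/exceedence duality $\{\xi_{t,k} > L\} = \{|\Pi^{(L)}| \geq k\}$ to reduce the first claim to the second, and prove the count asymptotic first. Since $g_\xi(x) = x^\gamma$, the tail probability at the level $L_{t,a}$ is
$$ p_t := \PP(\xi(\cdot) > L_{t, a}) = \exp(-L_{t,a}^\gamma) = \exp(-(1-a)\log|V_t|) = |V_t|^{-(1-a)}, $$
so $N_t := |\Pi^{(L_{t,a})}|$ is $\mathrm{Binomial}(|V_t|, p_t)$ with mean $\mu_t := |V_t|^a$. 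A multiplicative Chernoff bound then yields, for any $\delta > 0$,
$$ \PP(|N_t - \mu_t| > \delta \mu_t) \leq 2\exp(-c_\delta |V_t|^a) $$
for some $c_\delta > 0$ depending only on $\delta$.

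\emph{Almost-sure convergence of the count.} Fix $a \in (0,1)$. I would discretise time along a sequence $t_n \to \infty$ chosen so that $|V_{t_n}|$ grows geometrically (so that $\log |V_{t_n}|$ is linear in $n$). The Chernoff bounds above are then summable in $n$, so Borel--Cantelli gives $N_{t_n} \sim \mu_{t_n}$ almost surely. For continuous $t \in [t_n, t_{n+1}]$, the count $|\Pi_V^{(L)}|$ is monotone non-decreasing in $V$ and non-increasing in $L$, so we sandwich
$$ |\Pi_{V_{t_n}}^{(L_{t_{n+1}, a})}| \leq |\Pi_{V_t}^{(L_{t, a})}| \leq |\Pi_{V_{t_{n+1}}}^{(L_{t_n, a})}|. $$
By the geometric choice of $t_n$, the ratio $L_{t_{n+1}, a}/L_{t_n, a}$ is $1 + o(1)$, and both bounding counts at these perturbed parameters are still handled by a subsequential Chernoff/Borel--Cantelli estimate; hence $N_t \sim \mu_t$ almost surely, uniformly in $t$.

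\emph{Deducing the order statistic.} For each $\varepsilon > 0$, set $b_\pm := 1 - (1 \pm \varepsilon)^\gamma (1-a)$, so that $L_{t, b_\pm} = (1 \pm \varepsilon) L_{t,a}$ and (for small $\varepsilon$ and $a \in (0,1)$) $b_+ < a < b_-$. Applying the count asymptotic at $b_\pm$ gives, almost surely and eventually,
$$ |\Pi^{(L_{t, b_-})}| \sim |V_t|^{b_-} > \floor{|V_t|^a} > |V_t|^{b_+} \sim |\Pi^{(L_{t, b_+})}|, $$
which by the duality translates to $(1-\varepsilon) L_{t,a} \leq \xi_{t, \floor{|V_t|^a}} \leq (1+\varepsilon) L_{t,a}$ eventually. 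Intersecting over a countable sequence $\varepsilon_k \downarrow 0$ completes the proof.

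\emph{Main obstacle.} The chief technical subtlety is the interpolation to continuous $t$: since both $V_t$ and $L_{t,a}$ move with $t$ and the count is not monotone when one is held fixed and the other varies in the opposite direction, one must exploit the slow (sub-polynomial) variation of $L_{t,a}$ together with a sufficiently dense geometric subsequence so that the two-sided sandwich above collapses to a factor $1+o(1)$. The boundary cases warrant brief separate treatment: at $a = 0$, $\mu_t$ does not grow so Chernoff is replaced by a direct union bound for $P(\xi_{t,1} > (1+\varepsilon) L_{t,0})$ and the Bernoulli inequality $P(\xi_{t,1} \leq (1-\varepsilon) L_{t,0}) \leq \exp(-|V_t|^{1-(1-\varepsilon)^\gamma})$, giving the maximum asymptotic; at $a = 1$ the claim is trivial since $L_{t,1} = 0$.
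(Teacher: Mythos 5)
Your overall strategy --- reduce to binomial concentration via the exact tail formula $\PP(\xi > L_{t,a}) = |V_t|^{-(1-a)}$, apply Chernoff plus Borel--Cantelli on a discrete skeleton, fill in continuous $t$ with a monotonicity sandwich, and recover the order statistic from the duality $\{\xi_{t,k} > L\} = \{|\Pi^{(L)}| \ge k\}$ --- is the standard one and is in the spirit of the reference the paper cites. However, the interpolation step as written has a genuine gap. If $|V_{t_n}|$ grows geometrically, $|V_{t_{n+1}}|/|V_{t_n}| \to r > 1$, the bounding counts $|\Pi_{V_{t_n}}^{(L_{t_{n+1},a})}|$ and $|\Pi_{V_{t_{n+1}}}^{(L_{t_n,a})}|$ concentrate around $\mu^- := |V_{t_n}|\,|V_{t_{n+1}}|^{-(1-a)}$ and $\mu^+ := |V_{t_{n+1}}|\,|V_{t_n}|^{-(1-a)}$, and for $t \in [t_n,t_{n+1}]$ the sandwich only yields $N_t/|V_t|^a \in [(1-\delta)/r,\ (1+\delta)\,r]$ eventually, not $1+o(1)$. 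The slow variation of $L_{t,a}$ cannot rescue this, because the Weibull tail probability $\exp(-L_{t,a}^\gamma)$ changes by the fixed constant factor $r^{\pm(1-a)}$ across one geometric step. The cheap repair is to discretise on the integers $t_n = n$: then $|V_{n+1}|/|V_n| = 1+O(1/n) \to 1$ so the sandwich collapses, while $\sum_n \exp(-c_\delta |V_n|^a) < \infty$ for every $a>0$ since $|V_n|^a$ grows polynomially in $n$. Alternatively, run the geometric argument for a family of ratios $r_k \downarrow 1$ and intersect; a single geometric sequence does not suffice.

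Two smaller points. In the order-statistic deduction, your choice $b_\pm := 1-(1\pm\varepsilon)^\gamma(1-a)$ is correct, but you need $\varepsilon$ small enough that $b_+ > 0$, namely $\varepsilon < (1-a)^{-1/\gamma}-1$; this window closes as $a\downarrow 0$, which is exactly why $a=0$ requires the separate union-bound treatment you gave, so it would be worth making the constraint explicit. Also, the count statement at $a=0$ reads $|\Pi^{(L_{t,0})}| \sim 1$ almost surely, which cannot hold: that count is approximately $\mathrm{Poisson}(1)$ and takes the value $0$ at arbitrarily large times; and at $a=1$ the order-statistic claim degenerates since $L_{t,1}=0$. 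These endpoints are not actually used (the paper applies the lemma at $a=\theta\in(0,1/2)$), but rather than declaring them ``trivial'' you should note that only the order-statistic half makes sense at $a=0$ and only the count half at $a=1$, and restrict accordingly.
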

\begin{proof}
These follow from well-known results on sequences of i.i.d.\ random variables;
they are proved in a similar way as~\cite[Lemma 4.7]{vanDerHofstad08}.
\end{proof}

\begin{lemma}[Almost sure separation of high points; see {\cite[Lemma 1]{Astrauskas07}}]
\label{lem:assep}
For any $\varepsilon < \theta$, and for each $n \in \mathbb{N}$, eventually
$$r \left( \Pi^{(L_t)} \right) > |V_t|^{\frac{1-2\epsilon}{d}} > n$$
almost surely, where $ r\left(S \right) := \min_{x \neq y \in S} \{ |x - y| \} $.
\end{lemma}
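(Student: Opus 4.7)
The strategy is a standard first-moment (union bound) estimate, upgraded to an almost-sure ``eventually'' statement by Borel-Cantelli along a sparse subsequence, with monotonicity in $t$ used to interpolate. First I would compute the single-site tail probability: by the definition $L_t = ((1-\theta)\log|V_t|)^{1/\gamma}$ and $g_\xi(x) = x^\gamma$, each site lies in $\Pi^{(L_t)}$ with probability
$$ p_t \;:=\; \PP(\xi(0) > L_t) \;=\; \exp(-L_t^\gamma) \;=\; |V_t|^{-(1-\theta)}. $$
By independence of $\xi$ at distinct sites, translation invariance, and the volume bound $|B(0,r)| \leq C r^d$, the expected number of ordered pairs $(y,z) \in V_t^2$ with $y \neq z$, $|y-z| < r$, and $\xi(y), \xi(z) > L_t$ is at most $|V_t| \cdot C r^d \cdot p_t^2 = C\, r^d\, |V_t|^{2\theta - 1}$.

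Setting $r = |V_t|^{(1-2\varepsilon)/d}$, this bound becomes $C |V_t|^{2(\theta-\varepsilon)}$, and Markov's inequality yields
$$ \PP\!\left[\, r(\Pi^{(L_t)}) \leq |V_t|^{(1-2\varepsilon)/d} \,\right] \;\leq\; C |V_t|^{2(\theta-\varepsilon)}. $$
Since $|V_t| \sim C\, t^d (\log t)^{d/\gamma}$, the right-hand side decays polynomially in $t$ (in the regime where the exponent $2(\theta - \varepsilon)$ is negative), which is enough to get convergence in probability for free.

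To upgrade this to the almost-sure ``eventually'' conclusion I would take a geometric subsequence $t_k := 2^k$, along which the above bounds are summable and Borel-Cantelli applies, delivering almost surely $r(\Pi^{(L_{t_k})}) > |V_{t_k}|^{(1-2\varepsilon)/d}$ for all large $k$. To fill in $t \in [t_k, t_{k+1}]$, I would exploit monotonicity: both $V_t$ (set-inclusion) and $L_t$ (pointwise) are increasing in $t$, so for $t$ in this interval
$$ \Pi^{(L_t)} \cap V_t \;\subseteq\; \Pi^{(L_{t_k})} \cap V_{t_{k+1}}, $$
and the minimum separation can only increase when passing to a subset; combined with $|V_t|^{(1-2\varepsilon)/d} \leq |V_{t_{k+1}}|^{(1-2\varepsilon)/d}$, the separation at the discrete time $t_{k+1}$ transfers to every intermediate $t$. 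The tail inequality ``$> n$'' is then immediate because $|V_t| \to \infty$ whenever $1-2\varepsilon > 0$.

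The main obstacle is essentially bookkeeping: the first-moment bound only decays polynomially, so the subsequence needs to be sparse enough (geometric suffices) to make Borel-Cantelli summable in all dimensions $d \geq 1$, and one must be careful that the monotonicity interpolation preserves the correct direction of the inequality for the radius $|V_t|^{(1-2\varepsilon)/d}$. Everything else is a direct application of the moment computation above; indeed an alternative would be to cite the cited \cite[Lemma~1]{Astrauskas07} essentially verbatim.
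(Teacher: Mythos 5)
The paper itself offers no proof of this lemma — it simply cites \cite[Lemma~1]{Astrauskas07} — so there is no internal argument to compare against. Your first-moment plus Borel--Cantelli plus dyadic-interpolation outline is indeed the standard route.

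However, there is a genuine gap at the crux of the estimate. You correctly compute $p_t = |V_t|^{-(1-\theta)}$ and obtain, for the threshold $r = |V_t|^{(1-2\varepsilon)/d}$, the first-moment bound
\[
\PP\bigl(r(\Pi^{(L_t)}) \le r\bigr) \;\le\; C\, r^d\, |V_t|^{2\theta-1} \;=\; C\,|V_t|^{2(\theta-\varepsilon)}\,.
\]
But the lemma's hypothesis is $\varepsilon < \theta$, so $2(\theta-\varepsilon) > 0$ and this bound \emph{diverges} as $t \to \infty$. Neither Markov's inequality nor Borel--Cantelli yields anything in this regime. Your own parenthetical --- ``in the regime where the exponent $2(\theta-\varepsilon)$ is negative'' --- flags this, yet that is precisely the opposite of the regime in the statement, and the discrepancy is never reconciled. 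As written, the proof establishes the separation bound only for $\theta < \varepsilon < 1/2$. (Everywhere the lemma is invoked in the paper, a separation of order $|V_t|^{(1-2\varepsilon)/d}$ with $\varepsilon$ just above $\theta$ would suffice --- e.g.\ to get $\delta_t = o(h_t)$ one only needs $\varepsilon < \theta'$ --- so this may reflect a sign typo in the stated hypothesis rather than a deeper issue; but you should not silently prove a different statement.)

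A smaller point: the interpolation step as written contains a set-theoretic slip. You claim $\Pi^{(L_t)} \cap V_t \subseteq \Pi^{(L_{t_k})} \cap V_{t_{k+1}}$, but $\Pi^{(L_{t_k})} \cap V_{t_{k+1}} = \Pi^{(L_{t_k})} \subseteq V_{t_k}$, while $\Pi^{(L_t)}$ contains sites of $V_t \setminus V_{t_k}$, so the inclusion fails. The correct envelope for $t \in [t_k, t_{k+1}]$ is the hybrid set $\{z \in V_{t_{k+1}} : \xi(z) > L_{t_k}\}$ (larger box, lower level), which does contain $\Pi^{(L_t)}$ by monotonicity of $V_t$ and $L_t$. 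One then re-runs the same first-moment estimate on this hybrid set, using that $|V_{t_{k+1}}|/|V_{t_k}|$ is bounded along a geometric subsequence, and passes to the subset afterwards. This repair is routine, but the inclusion as you stated it is simply false.
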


\begin{lemma}[Bounds on principal eigenvalues]
\label{lem:minmax}
For each $n \in \mathbb{N}$ and $z \in V_t$,
$$  \xi(z) \le \tilde{\lambda}^{(n)}_t(z) \le \max\{L_t, \xi(z)\} + 2d $$
Moreover,
$$\lambda_{t, 1} \leq \xi_{t, 1} + 2d \,.$$
\end{lemma}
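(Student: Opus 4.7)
The plan is to use Rayleigh--Ritz for the lower bound and Gershgorin's circle theorem (equivalently, the $\ell^\infty$-operator-norm bound) for the upper bound, exploiting the explicit description of $\tilde{\mathcal H}_n^{(z)}$ given in equation \eqref{eq:defH}.

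\textbf{Lower bound on $\tilde{\lambda}_t^{(n)}(z)$.} Since $\tilde{\mathcal H}_n^{(z)}$ is self-adjoint on $\ell^2(B(z,n))$, the principal eigenvalue is the maximum of the Rayleigh quotient $\langle f, \tilde{\mathcal H}_n^{(z)} f\rangle / \langle f,f\rangle$. I would evaluate this on the test function $f = \id_{\{z\}}$: using the first case in the defining formula for $\tilde{\mathcal H}_n^{(z)}$, one gets $(\tilde{\mathcal H}_n^{(z)} \id_{\{z\}})(z) = \xi(z)$ (the neighbour sum vanishes because $\id_{\{z\}}(y)=0$ for $y\neq z$). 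Hence $\tilde{\lambda}_t^{(n)}(z) \geq \xi(z)$.

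\textbf{Upper bound on $\tilde{\lambda}_t^{(n)}(z)$.} By \eqref{eq:defH}, the matrix of $\tilde{\mathcal H}_n^{(z)}$ in the canonical basis has diagonal entries that equal $\xi(z)$ at the site $z$, equal $\xi(y)\leq L_t$ at sites $y\in B(z,n)\setminus (\Pi^{(L_t)}\cup\{z\})$ (where $\tilde\xi=\xi$ but below the threshold), and equal $0$ at sites in $B(z,n)\cap(\Pi^{(L_t)}\setminus\{z\})$ (where the field has been punctured), so every diagonal entry is at most $\max\{L_t,\xi(z)\}$. The off-diagonal entries are $1$ between nearest neighbours in $B(z,n)$ and $0$ otherwise, so each row sum of off-diagonal entries is at most $2d$. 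Gershgorin's circle theorem (applicable since $\tilde{\mathcal H}_n^{(z)}$ is symmetric with real spectrum) then gives $\tilde\lambda_t^{(n)}(z) \leq \max\{L_t,\xi(z)\} + 2d$, completing the two-sided bound.

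\textbf{Bound on $\lambda_{t,1}$.} The same Gershgorin argument applied to the (genuinely unpunctured) Hamiltonian $\mathcal H = \Delta_{V_t} + \xi$ on the torus $V_t$ yields $\lambda_{t,1} \leq \max_{z\in V_t}\xi(z) + 2d = \xi_{t,1} + 2d$, since the diagonal entries are now just $\{\xi(z)\}_{z\in V_t}$ and off-diagonal row sums are still bounded by $2d$ (the torus identification does not change the degree).

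There is no real obstacle here; the only thing to be a little careful about is verifying, from \eqref{eq:defH}, that the diagonal entry at each $y\neq z$ in $B(z,n)$ is indeed bounded by $L_t$ (either because the site is not in $\Pi^{(L_t)}$ and so $\xi(y)\leq L_t$ by definition of the punctured field, or because the site is in $\Pi^{(L_t)}$ and has been set to $0$), which is what makes the bound independent of the potentially very large values of $\xi$ outside $\{z\}$.
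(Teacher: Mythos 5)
Your proof is correct, and it is essentially the paper's argument, which the authors compress into the single sentence "these follow from the min-max theorem." Your lower bound (test function $\id_{\{z\}}$ in the Rayleigh quotient) is literally the min-max argument; for the upper bound you invoke Gershgorin, whereas the paper's phrasing suggests bounding the Rayleigh quotient directly by splitting it into a diagonal part (at most $\max_x\mathrm{diag}(x)$) and an off-diagonal adjacency part (at most $2d$), but these two routes are elementary rephrasings of the same estimate and yield the identical conclusion.
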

\begin{proof}
These follow from the min-max theorem for the principal eigenvalue.
\end{proof}
    
\begin{extra*}
\end{extra*}
Note that the weaker statement that $|U_{V_t}(t) - U(t)| \to 0$ is proved
in~\cite[Section 2.5]{Gartner98} (although for a slightly different macrobox);
we need to control the error more precisely.

For $z \in \mathbb{Z}^d$, let ${[z]}_{V_t}$ denote the site in $V_t$ that belongs
to the equivalence class of $z$ in the quotient space $\mathbb{Z}^d \setminus
V_t$. Further, define a field $\xi^{\text{per}}_{V_t}$ on $\mathbb{Z}^d$ by
$\xi^{\text{per}}_{V_t}(\cdot) := \xi({[\cdot]}_{V_t})$. For a fixed $t > 0$,
consider the Feynman-Kac representations of $u(t, z)$ and $u_{V_t}(t, z)$:
\begin{equation}
u(t, z) = \EE \left[ \exp \left\{ \int_0^t \xi(X_s) + 2d \, ds \right\} \id_{\{X_t = z\}} \right]
\label{def:fk1}
\end{equation}
\begin{equation}
u_{V_t}(t, z) =  \EE \left[ \exp\left\{ \int_0^t \xi^{\text{per}}_{V_t}(X_s) + 2d
\, ds \right\} \id_{\{{[X_t]}_{V_t} = z\}} \right]
\label{def:fk2}
\end{equation}
where ${\{X_s\}}_{s \in \mathbb{R}^+}$ denotes the continuous-time random walk
on the lattice $\mathbb{Z}^d$ based at the origin, $\id_{A}$ denotes the
indicator function for the event $A$, and where the expectation $\EE$ is taken
over the trajectories of the random walk $X_s$.

For each $n \in \mathbb{N}$, let $e_n(X)$ denote the event that $\max_{s<t}
|X_s|_{\ell^\infty} = n$. Let $u^n(t, z)$ and $u^n_{V_t}(t,z)$ denote, respectively,
the expectations in (\ref{def:fk1}) and (\ref{def:fk2}) restricted to the event
$e_n(X)$, and define $U^n(t) := \sum_{z \in \mathbb{Z}^d} u^n(t, z)$ and
$U^n_{V_t}(t) := \sum_{z \in \mathbb{Z}^d} u^n_{V_t}(t, z)$ by analogy with
$U(t)$ and $U_{V_t}(t)$ respectively. Then it is clear, for each $z$, that
\begin{align}
\label{eq:lessthanrt}
\sum_{n < R_t} u^n(t, z) =  \sum_{n < R_t} u_{V_t}^n(t, z) \,.
\end{align}
Further, if $\xi^{(n)}_{1}$ is the largest value of $\xi$ in the box $\{z \in \mathbb{Z}^d : |z|_{\ell^\infty} \le n \}$, then 
$$ \max \{ U^n(t), U^n_{V_t}(t) \} \leq e^{t (\xi^{(n)}_{1} + 2d)} \PP(e_n(X)) \,.$$
As $n \to \infty$, we can bound $\xi^{(n)}_{1} + 2d$ almost surely with \Cref{lem:asforxi}:
$$ \xi^{(n)}_{1} + 2d \sim {(d \log n)}^{\frac{1}{\gamma}} \,.$$
For $n \geq R_t$ and by Stirling's approximation, we can also bound the probability $\PP(e_n(X))$ by
$$ \log \PP(e_n(X)) \leq \log \text{Pn}_{2dt}(n)  < - n \log n + n\log t + O(n) $$
where $\text{Pn}_{a}(n)$ denotes the probability mass function for the Poisson distribution with mean $a$, evaluated at $n$. Combining these bounds, for $n \geq R_t$ and as $t \to \infty$ eventually
$$ \max \{ U^n(t), U^n_{V_t}(t) \} < \exp \{ t (d \log n)^{\frac{1}{\gamma}}(1+\varepsilon) - n \log n + n\log t + C n) \} $$
almost surely, for any $\varepsilon > 0$ and for some $C > 0$. Since $n \geq R_t = t (\log t)^{\frac{1}{\gamma}}$, for $t$ large enough this can be further bounded as
$$ \max \{ U^n(t), U^n_{V_t}(t) \} < \exp \{-(1-\varepsilon) n \log n \} \,.$$
This implies that, eventually
\begin{align}
\label{eq:morethanrt}
\sum_{n \geq R_t} \max \{ U^n(t), U^n_{V_t}(t) \} < e^{- (1-\varepsilon) R_t \log R_t } \sum_{n \geq 0} e^{- (1-\varepsilon) n \log R_t} = o\left( e^{-R_t} \right) 
\end{align}
holds almost surely. Combining equations (\ref{eq:lessthanrt}) and (\ref{eq:morethanrt}), we get that
\begin{align*}
 \left|u(t, z) - u_{V_t}(t, z) \right| &=  \left|\sum_{n \geq R_t} u^n(t, z) - u^n_{V_t}(t, z) \right| \leq \sum_{n \geq R_t} u^n(t, z) + u^n_{V_t}(t, z) \\
 &  \leq \sum_{n \geq R_t} U^n(t) + U^n_{V_t}(t) \leq 2 \sum_{n \geq R_t} \max \{ U^n(t), U^n_{V_t}(t) \} = o(e^{-R_t})
 \end{align*}
and, similarly,
\begin{align*}
|U(t) - U_{V_t}(t)| \leq \sum_{n \geq R_t} U^n(t) + U^n_{V_t}(t) = o(e^{-R_t})
\end{align*}
as required.
\qed

\section{A Point Process Approach}
\label{sec:pp}
In this section, we use point process techniques to study the random variables
$Z_{t}^{(1, \rho)}$ and $\tilde{\Psi}^{(\rho)}_{t}(Z_{t}^{(1, \rho)})$, and
generalisations thereof; the techniques used are similar to those found in
\cite{Sidorova12}. In the process, we complete the proof of \Cref{thm:main2}.

\subsection{Point process asymptotics}
Fix an $0 < \varepsilon < \theta$ and an $0 < \eta < 2\rho - \gamma + 3$, remarking that the latter is possible by the definition of $\rho$. Recall also the definition $j := [\gamma / 2] \in \{ \rho, \rho + 1\}$. For each $n \in \mathbb{N}$ such that $n \leq j$, define the annuli $\bar{B}_1:= B({0}, \min\{n, \rho\}) \setminus \{0\}$ and $\bar{B}_2 := B(0, j) \setminus (\bar{B}_1 \cup \{0\})$, and the following $|\bar{B}_1 \cup \bar{B}_2|$-dimensional rectangles:
\begin{align*}
E^{(n)} := E_1^{(n)} \times E_2^{(n)} := \prod_{y \in \bar{B}_1} (1 - f_t, 1 + f_t) \times \prod_{y \in \bar{B}_2 } (0, a_t^\eta)  
\end{align*} 
and, after rescaling $E_1^{(n)}$ in each dimension, 
$$ S^{(n)} := \prod_{y \in \bar{B}_1} a_t^{q(|y|)} \pi_y (E_1^{(n)}) \times E_2^{(n)} $$
where $\pi_y$ is the projection map with respect to $y$. Finally, for each $z \in V_t$, define the event
\begin{align*}
\mathcal{S}^{(n)}_t(z) := \{ \xi(z) \in a_t (1-f_t, 1+f_t) \} \cup \big\{ \{\xi(z+y)\}_{y \in \bar{B}_1 \cup \bar{B}_2} \in S^{(n)} \big\} 
\end{align*}
with $\bar{\mathcal{S}}^{(n)}_t(z)$ its complement.

\begin{proposition}[Path expansion for $\tilde{\lambda}^{(n)}_t$]
\label{prop:pathexpforlambda}
As $t \to \infty$, for each $n \in \mathbb{N}$ and $z \in \Pi^{(L_{t, \varepsilon})}$ uniformly,
\begin{align*}
\tilde{\lambda}_t^{(n)}(z)  &=  \xi(z) + \sum_{k \geq 2} \sum_{\Gamma^\ast_{k}(z, \hat n)} \prod_{0 < i < k} \frac{1}{\tilde{\lambda}_t^{(n)}(z) - \xi \id_{B(z, n)}(y_i)}  \\
& = \xi(z) + \sum_{2 \leq k \leq 2j} \sum_{\Gamma^\ast_{k}(z, \hat n)} \prod_{0 < i < k} \frac{1}{\tilde{\lambda}_t^{(n)}(z) - \xi \id_{B(z, n)} (y_i)} + o(d_t e_t)  
\end{align*}
almost surely, where $\Gamma^\ast_{k}(z, \hat n)$ is the set of all length $k$ nearest neighbour paths 
$$ z =: y_0 \to y_1 \to \ldots \to y_{k} := z \quad \text{in } B(z, \hat  n)$$
such that $y_i \neq z$ for all $0 < i < k$.
\end{proposition}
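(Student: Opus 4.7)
The plan is to derive the expansion by iterating the eigenvalue equation for $\tilde{\mathcal{H}}_n^{(z)}$, exploiting that the ``tall'' diagonal entry at $z$ is well-separated from all other high values so that the resolvent factors along a path decay geometrically. First I would invoke \Cref{lem:assep} with the given $\varepsilon < \theta$: eventually, almost surely, $r(\Pi^{(L_t)}) > n$, so for every $z \in \Pi^{(L_{t,\varepsilon})} \subseteq \Pi^{(L_t)}$ the site $z$ is the unique element of $\Pi^{(L_t)}$ inside $B(z,n)$. Consequently $\tilde{\xi}(y) = \xi(y) \leq L_t$ for all $y \in B(z,n) \setminus \{z\}$, while $\xi(z) > L_{t,\varepsilon}$, and $L_{t,\varepsilon} - L_t \asymp a_t$ uniformly in $z$. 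The matrix $\tilde{\mathcal{H}}_n^{(z)}$ is symmetric and irreducible on the connected ball $B(z,n)$ with non-negative off-diagonal entries, so Perron--Frobenius supplies a strictly positive principal eigenfunction $\varphi$ for $\lambda := \tilde{\lambda}_t^{(n)}(z)$, and the variational characterisation gives $\lambda \geq \xi(z)$.

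Writing the eigenvalue equation at each site, with Dirichlet boundary conditions outside $B(z,n)$, yields
\begin{align*}
(\lambda - \xi(z))\varphi(z) &= \sum_{y \sim z,\, y \in B(z,n)} \varphi(y), \\
\varphi(y) &= \frac{1}{\lambda - \xi(y)} \sum_{y' \sim y,\, y' \in B(z,n)} \varphi(y'), \qquad y \in B(z,n) \setminus \{z\},
\end{align*}
where the second line is well-defined since $\lambda - \xi(y) \geq \xi(z) - L_t > 0$. I would then iterate the second identity inside the first, expanding each $\varphi(y_i)$ at every interior vertex $y_i \neq z$ until the walk either returns to $z$ (terminating the branch with a factor $\varphi(z)$ multiplied by the product of resolvents along the path) or exits $B(z,n)$ (contributing $0$). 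This bookkeeping reproduces exactly the sum over $\Gamma^\ast_k(z,n)$ with product of resolvents $(\lambda - \xi(y_i))^{-1}$, and dividing through by $\varphi(z) > 0$ gives the infinite-series identity.

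For the truncation at $k = 2j$, I would use that $|\lambda - \xi(y_i)|^{-1} \leq C/a_t$ uniformly in $z$ and $y_i$, that closed paths on $\mathbb{Z}^d$ have even length, and that the number of length-$k$ nearest-neighbour paths based at $z$ is at most $(2d)^k$. The tail $\sum_{k > 2j}$ is then controlled by a geometric series whose first term is of order $a_t^{-(2j+1)}$; by the defining property of $j$ this is $o(d_t)$, and a permissible choice of the auxiliary scaling $e_t$ (compatible with the constraints in \Cref{sec:outline}) upgrades this to $o(d_t e_t)$.

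The main obstacle is really the bookkeeping for the iteration --- verifying that the termwise substitution produces exactly the paths in $\Gamma^\ast_k(z,n)$ with no double-counting of returns to $z$, that branches reaching the boundary of $B(z,n)$ contribute nothing, and that the resulting series converges absolutely and uniformly in $z \in \Pi^{(L_{t,\varepsilon})}$ so that the formal rearrangement is legitimate. The analytic input $\lambda - \xi(y) \gtrsim a_t$ for $y \neq z$ in $B(z,n)$ does essentially all of the heavy lifting.
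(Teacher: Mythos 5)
Your proposal is correct, and it takes a route that differs from the paper's in a way worth noting. The paper derives the expansion by quoting the identity $\xi(z)^{-1} = \sum_{k \ge 0} \sum_{\Gamma_k(z,n)} \prod_{0 \le i \le k} (\tilde{\lambda}_t^{(n)}(z) - \tilde{\xi}(y_i))^{-1}$ from \cite[Lemma 2]{Astrauskas07} (the Green's-function characterisation of the principal eigenvalue of a rank-one perturbation), then factorises the sum over \emph{all} closed paths as a geometric series $\sum_{l \ge 0} A^l = (1-A)^{-1}$ by decomposing each path into $z$-excursions, and finally rearranges to obtain $\tilde{\lambda}_t^{(n)}(z) = \xi(z) + \tilde{\lambda}_t^{(n)}(z) A$. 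You instead iterate the eigenvalue equation directly, terminating each branch at the first return to $z$ or at the Dirichlet boundary; this produces the excursion sum $\sum_{k \ge 2} \sum_{\Gamma^\ast_k(z,n)}$ in one pass and avoids both the external identity and the excursion decomposition. Your derivation is therefore more elementary and self-contained, at the modest cost of having to control the remainder $R_K$ after $K$ iterations explicitly (which you do, and which is straightforward since $R_K \lesssim a_t (2d/a_t)^K \to 0$ by the uniform separation $\tilde{\lambda}_t^{(n)}(z) - \xi(y) \ge L_{t,\varepsilon} - L_t \asymp a_t$ from \Cref{lem:assep}). The truncation step is handled identically to the paper: the first omitted term has order $a_t^{-(2j+1)}$, which is $o(d_t)$ by the definition of $j$. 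One small refinement to your last remark: you do not need to \emph{choose} $e_t$ specially here --- the constraint $f_t h_t \gg 1/\log\log t$ together with $e_t \gg g_t h_t$ already forces $e_t \gg 1/\log\log t$, which dominates every negative power of $\log t$; hence $a_t^{-(2j+1)} = o(d_t e_t)$ automatically for any admissible choice of the auxiliary scales.
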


\begin{proof}
As in \cite[Lemma 2]{Astrauskas07}, the eigenvalue $\tilde{\lambda}^{(n)}_t(z)$ satisfies
\begin{align}
\label{eq:pathexpl}
\nonumber \frac{1}{\xi(z)} &= \sum_{k \geq 0} \sum_{\Gamma_{k}(z, \hat n)} \prod_{0 \leq
i \le k} \frac{1}{\tilde{\lambda}_t^{(n)}(z) - \tilde{\xi}\id_{B(z, n)} (y_i)}   \\
& =  \frac{1}{\tilde{\lambda}^{(n)}_t(z)}   \sum_{k \geq 0} \sum_{\Gamma_{k}(z, \hat n)} \prod_{0 \le
i < k} \frac{1}{\tilde{\lambda}_t^{(n)}(z) - \tilde{\xi} \id_{B(z, n)} (y_i)}  
\end{align}
where $\Gamma_{k}(z, \hat n)$ is the set of all length $k$ nearest neighbour paths 
$$ z =: y_0 \to y_1 \to \ldots \to y_{k} := z \quad \text{in } B(z, \hat n) $$
i.e.\ including paths that return to $z$ multiple times; $\Gamma_{0}(z, \hat n)$ is understood to consist of a single degenerate path. Remark that the factor $1/\tilde{\lambda}^{(n)}_t(z)$ in equation \eqref{eq:pathexpl} appears since $\tilde{\xi}(y_k) = \tilde{\xi}(z) = 0$. Noticing that, by \Cref{lem:minmax}, $\tilde{\lambda}^{(n)}_t(z) \geq \xi(z) > L_{t, \epsilon}$, we may define
\begin{align*}
A := \sum_{k \geq 2} \sum_{\Gamma^\ast_{k}(z, \hat n)} \prod_{0 \leq
i < k} \frac{1}{\tilde{\lambda}_t^{(n)}(z) - \tilde{\xi}\id_{B(z, n)} (y_i)} = o(1)  \,.
\end{align*}
By decomposing each path in $\cup_{k \geq 0} \Gamma_{k, \hat n}$ into a sequence of paths in $\cup_{k \geq 0}\Gamma_{k, \hat n}^\ast$, we get that the right hand side of \cref{eq:pathexpl} is equal to 
$$ \frac{1}{\tilde{\lambda}^{(n)}_t(z)} \sum_{l \geq 0} A^l =  \frac{1}{\tilde{\lambda}^{(n)}_t(z)} \frac{1}{1-A} $$
and so \cref{eq:pathexpl} gives
\begin{align*}
\tilde{\lambda}^{(n)}_t(z) =  \xi(z) +  \tilde{\lambda}^{(n)}_t(z) A  = \xi(z)  + \sum_{k \geq 2} \sum_{\Gamma^\ast_{k}(z, \hat n)} \prod_{0 < i < k} \frac{1}{\tilde{\lambda}_t^{(n)}(z) - \tilde{\xi}\id_{B(z, n)}(y_i)} \,.
\end{align*}
Noticing that $(L_{t, \epsilon} - L_t)^{-(2j-1)} = o(d_t e_t)$, this yields the result by truncating the infinite sum after paths of length $2j$, and since, by \Cref{lem:assep}, eventually $\tilde{\xi} = \xi$ on $B(z, n) \setminus \{z\}$ almost surely.
\end{proof}

\begin{proposition} [Extremal theory for $\tilde{\lambda}_t^{(n)}$;  see {\cite[Section 6]{Astrauskas08}}]
\label{prop:asympt}
For each $n \in \mathbb{N}$ such that $n \leq j$, there exists a scaling function $A^{(n)}_t = a_t + o(1)$ such that, as $t \to \infty$ and for each fixed $x \in \mathbb{R}$, the following are satisfied:
$$ t^d \, \PP   \left(\tilde{\lambda}_t^{(n)}(0) > A^{(n)}_t + x d_t  \right) \to e^{- x}  $$
and
$$  t^d \, \PP  \left(\tilde{\lambda}_t^{(n)}(0) > A^{(n)}_t + x d_t \ , \  \bar{\mathcal{S}}^{(n)}_t(0) \right) \to 0 \,.$$
\end{proposition}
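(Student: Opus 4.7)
The approach is to reduce the extremal theory of $\tilde{\lambda}_t^{(n)}(0)$ to that of the bare potential $\xi(0)$ using the path expansion of \Cref{prop:pathexpforlambda}. The tail of $\xi(0)$ itself is handled by the sharp Weibull density: using $a_t^\gamma = d \log t$ and $\gamma a_t^{\gamma - 1}d_t = 1$, a Taylor expansion of $(a_t + xd_t)^\gamma$ gives $t^d\, \PP(\xi(0) > a_t + xd_t) \to e^{-x}$, which already handles $n = 0$ with $A_t^{(0)} := a_t$. For $n \geq 1$ the idea is to identify an asymptotically deterministic shift $\mu_t^{(n)} = o(1)$ and set $A_t^{(n)} := a_t + \mu_t^{(n)}$, so that $\{\tilde{\lambda}_t^{(n)}(0) > A_t^{(n)} + xd_t\}$ becomes asymptotically equivalent, on the shape event, to $\{\xi(0) > a_t + xd_t\}$.

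To locate $\mu_t^{(n)}$, I start from the truncated expansion
\begin{align*}
\tilde{\lambda}_t^{(n)}(0) - \xi(0) = \sum_{2 \leq k \leq 2j} \sum_{\Gamma^\ast_{k}(0,n)} \prod_{0 < i < k} \frac{1}{\tilde{\lambda}_t^{(n)}(0) - \xi(y_i)} + o(d_t e_t)
\end{align*}
and observe that on $\mathcal{S}_t^{(n)}(0)$ every potential $\xi(y_i)$ in a denominator is much smaller than $a_t$: for $y_i \in \bar{B}_1$ one has $\xi(y_i) = a_t^{q(|y_i|)}(1 + O(f_t))$ with $q(|y_i|) < 1$, while for $y_i \in \bar{B}_2$ one has $\xi(y_i) < a_t^\eta$ with $\eta < 1$ (using $\gamma \neq 2\rho + 1$ precisely when $\bar{B}_2 \neq \emptyset$). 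Substituting the typical values $a_t^{q(|y_i|)}$ for $\xi(y_i)$ and $a_t$ for $\tilde{\lambda}_t^{(n)}(0)$ into the path sum produces a deterministic quantity $\mu_t^{(n)}$ of order $O(1/a_t) = o(1)$. The technical task is to show that the true random correction lies within $o(d_t)$ of this $\mu_t^{(n)}$ on $\mathcal{S}_t^{(n)}(0)$; this is achieved by a path-by-path Taylor analysis, exploiting that paths of length $2k$ contribute at most $O(a_t^{-(2k-1)})$ and that the number of such paths is bounded uniformly in $t$.

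With $\mu_t^{(n)}$ pinned down, the first claim follows by factoring. Since $\xi(0)$ is independent of $\{\xi(y)\}_{y \in \bar{B}_1 \cup \bar{B}_2}$ and the neighbourhood part of $\mathcal{S}_t^{(n)}(0)$ has probability tending to $1$ (a direct Weibull-density calculation using $\eta < 1$ and the slow decay of $f_t$), I obtain
\begin{align*}
\PP\bigl(\tilde{\lambda}_t^{(n)}(0) > A_t^{(n)} + xd_t,\, \mathcal{S}_t^{(n)}(0)\bigr) \sim \PP\bigl(\xi(0) > a_t + xd_t\bigr) \cdot \PP(\text{neighbours typical}) \sim t^{-d} e^{-x}.
\end{align*}
For the second claim, I decompose $\bar{\mathcal{S}}_t^{(n)}(0)$ into its two bad sub-events — $\xi(0) \notin a_t(1-f_t, 1+f_t)$ and $\{\xi(y)\}_{y \in \bar{B}_1 \cup \bar{B}_2} \notin S^{(n)}$ — and bound the joint probability with $\{\tilde{\lambda}_t^{(n)}(0) > A_t^{(n)} + xd_t\}$ separately in each case using the Weibull tail, showing that each contribution is $o(t^{-d})$.

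The main obstacle is the precision required in the path-expansion analysis: showing that the random correction concentrates within $o(d_t)$ of $\mu_t^{(n)}$ on the shape event, since $d_t$ is exactly the Gumbel scale and any cruder control would destroy the limit. This requires carefully tracking how the fluctuations $\xi(y_i) - a_t^{q(|y_i|)}$ propagate through the product of denominators, uniformly over all paths in $\Gamma^\ast_{k}(0,n)$ for $2 \leq k \leq 2j$, and verifying the compatibility of the scales $f_t/a_t$, $a_t^{\eta - 1}$, and $d_t$ that arise from the resulting Taylor remainders.
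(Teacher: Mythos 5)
Your proposal takes a genuinely different route from the paper, and that route contains two fatal gaps as soon as $\rho \geq 1$ (equivalently $\gamma \geq 3$), which is precisely the regime the paper is designed to handle. The paper writes the tail probability as a multidimensional integral over the neighbouring potential values (equations \eqref{eq:intrep1}--\eqref{eq:intrep2}), evaluates it by Laplace's method, and defines $A_t^{(n)}$ implicitly through the variational equation involving the Hessian of $R_t^{(n)}$; your proposal instead tries to reduce directly to the one-dimensional tail of $\xi(0)$ via a deterministic shift plus concentration plus factoring, and each of these ingredients fails.

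First, the factoring step rests on the assertion that $\PP\bigl(\{\xi(y)\}_{y \in \bar{B}_1 \cup \bar{B}_2} \in S^{(n)}\bigr) \to 1$. This is false whenever $\bar{B}_1 \neq \emptyset$. For $y \in \bar{B}_1$ with $q(|y|) > 0$ the event $\xi(y) \in a_t^{q(|y|)}(1 - f_t, 1 + f_t)$ lives in the far upper tail of $\xi(y)$ (since $a_t^{q(|y|)} \to \infty$), so its unconditional probability is super-polynomially small in $t$; even when $q(|y|) = 0$ it is a shrinking window around the fixed value $1$ and its probability is $O(f_t) \to 0$. The correct statement — which is the second assertion of the proposition — is that the \emph{conditional} probability of $\mathcal{S}_t^{(n)}(0)$ given $\tilde{\lambda}_t^{(n)}(0) > A_t^{(n)} + x d_t$ tends to $1$. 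Because of this, the probability $t^d \PP(\tilde{\lambda}_t^{(n)}(0) > A_t^{(n)} + x d_t)$ is not $\PP(\xi(0) > a_t + xd_t) \cdot \PP(\text{neighbours typical})$; the cost of pushing the neighbours into the window around $a_t^{q(|y|)}$ must be \emph{traded off against} the gain in $\tilde{\lambda}_t^{(n)}(0)$, and resolving that trade-off is exactly the role of the Laplace-method integration and the variational definition of $A_t^{(n)}$. The Gaussian normalisation from that integration, namely the term $\tfrac{1}{2}\sum_{y \in \bar{B}_1}\bigl[\log\bigl(\partial^2_{s_y} R_t^{(n)}\bigr) - \log(2\pi)\bigr]$ in the equation defining $A_t^{(n)}$, is of order $\log a_t \asymp \log\log t$ and shifts $A_t^{(n)}$ by an amount of order $d_t \log \log t \gg d_t$; your candidate $A_t^{(n)} = a_t + \mu_t^{(n)}$ cannot produce it, so it cannot be the correct scaling function.

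Second, the concentration claim is itself false for $\gamma > 3$. Take the length-two terms $\sum_{|y|=1}(\tilde{\lambda}_t^{(n)}(0) - \xi(y))^{-1}$ in the path expansion. On $\mathcal{S}_t^{(n)}(0)$ each $\xi(y)$ with $|y| = 1$ ranges over $a_t^{q(1)}(1 - f_t, 1 + f_t)$, so this term fluctuates by roughly $a_t^{q(1)} f_t / a_t^2$. Comparing with $d_t \asymp a_t^{1-\gamma}$, the ratio is $f_t\, a_t^{q(1) + \gamma - 3}$, and $q(1) + \gamma - 3 = \gamma - 2 - \tfrac{2}{\gamma-1} > 0$ for all $\gamma > 3$. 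Since $f_t$ is constrained by $f_t h_t \gg 1/\log\log t$, it decays strictly more slowly than any power of $\log t$, so $f_t\, a_t^{q(1) + \gamma - 3} \to \infty$ and the correction $\tilde{\lambda}_t^{(n)}(0) - \xi(0)$ does \emph{not} lie within $o(d_t)$ of any deterministic $\mu_t^{(n)}$. The same computation holds at every shell $|y| = m$ with $1 \leq m < (\gamma-1)/2$ in $\bar{B}_1$. Thus the single parameter $\mu_t^{(n)}$ cannot capture the effect of the neighbouring potentials on the scale $d_t$: one genuinely has to integrate over them, and there is no shortcut around the Laplace computation that the paper performs.
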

\begin{remark}
In the case $\gamma < 4$, full asymptotics (i.e.\ up to order $d_t$) for $A^{(j)}_t$ can be found in \cite[Section 6]{Astrauskas08}; otherwise, a recurrence formula for $A^{(j)}_t$ is available. Remark also that the same asymptotics hold for each $z \in \mathbb{Z}^d$; we choose the origin for convenience.
\end{remark}

\begin{proof}
\Cref{prop:asympt} is a minor extension of the results in \cite[Section 6]{Astrauskas08}. We prove it in a similar manner to \cite[Theorem 6.3]{Astrauskas08}, by writing the probability as a certain integral and approximating it using Laplace's method. Denote by $f_\xi(x)$ the density function of $\xi(0)$. For a scaling function $C_t \geq a_t $ and a positive field 
$$s^{(n)} := (s_1^{(n)}; s_2^{(n)}) := (\{s_y: y \in \bar{B}_1 \}; \{s_y: y \in \bar{B}_2 \})$$ 
define the function
$$ 
 Q_t^{(n)}(C_t; s^{(n)}) := \sum_{k \geq 2} \sum_{\Gamma^\ast_{k}(0, \hat n)} \prod_{0 < i < k} \frac{1}{C_t - C_t^{q(|y_i|)} s_{y_i} \id_{y_i \in B(0, n)}}  $$
if the sum converges and $Q_t^{(n)}(C_t; s^{(n)}) :=  0$ otherwise, and the functions
\begin{align*}
& R_t^{(n)}(C_t; s^{(n)}) :=  \left(C_t -  Q_t^{(n)}(C_t; s^{(n)}) \right)^\gamma - \sum_{y \in \bar{B}_1} \left( \log f_\xi(C_t^{q(|y|)} s_y ) + \log C_t^{q(|y|)} \right)
\end{align*}
and
\begin{align*}
& P_t^{(n)}(C_t; s^{(n)}) := R_t^{(n)}(C_t; s^{(n)}) - \sum_{y \in \bar{B}_2} \log f_\xi(s_y ) \,.
\end{align*}
To motivate these definitions, consider the first statement of
\Cref{prop:asympt}. Notice that, by \Cref{lem:minmax}, as $t \to \infty$, eventually
$$ \tilde{\lambda}_t^{(n)}(0) > C_t + x d_t \geq a_t + x d_t \implies \xi(0) >
L_{t, \varepsilon} \, .$$
This means that we can apply the path expansion
in \Cref{prop:pathexpforlambda} to $\tilde{\lambda}_t^{(n)}(0)$. Then, since
$\tilde{\lambda}_t^{(n)}(0)$ is strictly increasing in $\xi(0)$, we may write
the probability as the following integrals of $P_t^{(n)}$ and $R_t^{(n)}$ (note the change of variables):
\begin{align}
\label{eq:intrep1} & \PP \left(\tilde{\lambda}_t^{(n)}(0) > C_t \right)  = \int_{\mathbb{R}_+^{|\bar{B}_1 \cup \bar{B}_2|}} \exp \left\{ - P_t^{(n)}(C_t; s^{(n)}) \right\}   \, ds^{(n)} + o(1) \\
\label{eq:intrep2} & \quad = \int_{\mathbb{R}_+^{|\bar{B}_2|}} \prod_{y \in \bar{B}_2} f_\xi(s_y) \left[ \int_{\mathbb{R}_+^{|\bar{B}_1|}} \exp \left\{ - R_t^{(n)}(C_t; s^{(n)}) \right\}   \, ds_1^{(n)} \right] \, ds_2^{(n)}   + o(1) \,.
\end{align}
with the $o(1)$ bound taking care of the contribution from the $s^{(n)}$ for which $Q_t^{(n)}(C_t; s^{(n)})$ does not converge, by \Cref{lem:assep}. 

To approximate these integrals, we state some properties of the functions $P_t^{(n)}$ and $R_t^{(n)}$. Similarly to as in \cite[Section 6]{Astrauskas08}, for a fixed $s_2^{(n)} \in E_2^{(n)}$, the function $R_t^{(n)}(C_t; s^{(n)})$ achieves a minimum at some $s_1^{(n)} \in E_1^{(n)}$. Moreover, for any $s^{(n)} \in E^{(n)}$, the fact that $\eta - 2(\rho+1) < 1 - \gamma$ implies that
\begin{align}
\label{eq:o1}
R_t^{(n)} \left(C_t; s^{(n)} \right)  = R_t^{(n)} \left(C_t; (s_1^{(n)}; 0) \right)  + o(a_t^{-\text{const.}}) 
\end{align}
for a positive constant, where $0$ here denotes the zero vector. The function $R_t^{(n)}(C_t; s^{(n)})$ is also strictly increasing in $C_t$, satisfying
$$ \min_{s_1^{(n)} \in E_1^{(n)}}  R_t^{(n)} \left(C_t; (s_1^{(n)}; 0) \right)  = C_t^\gamma + O(C_t^{\gamma - 2})$$
and, for each $y \in \bar{B}_1^{(n)}$, 
$$ \partial^2_{s_y} R_t^{(n)} | _{(C_t;(1;0))} = O(C_t^{\gamma-2} ) $$
where $1$ here denotes the vector of ones. In particular, this implies that there exists a scaling factor $A^{(n)}_t = a_t + o(1)$ that satisfies
$$  \min_{s_1^{(n)} \in E_1^{(n)} } R_t^{(n)}  \left( A^{(n)}_t; (s_1^{(n)}; 0) \right) + \frac{1}{2}  \sum_{y \in \bar{B}_1} \left[ \log \left( \partial^2_{s_y} R_t^{(n)} |_{(A_t^{(n)};(1;0))} \right) -  \log (2\pi) \right] = a_t^{\gamma}   \,.$$
Remark that if $n = 0$, then $R_t^{(0)} \left(C_t; s^{(0)} \right)  = C_t^\gamma$ and so $A_t^{(0)} = a_t$. Finally, by a similar calculation as in \cite[Lemma 6.8]{Astrauskas08}, if $s^{(n)} \notin E^{(n)}$, then
\begin{align}
\label{eq:gap}
P_t^{(n)}(A^{(n)}_t + x d_t; s^{(n)}) - a_t^\gamma - x > a_t^{\text{c}} \min_{y \in\bar{B}_1 \cup \bar{B}_2} |s_y - 1|^2 
\end{align}
eventually, for some constant $c > 0$. 

Consider now the integral in \cref{eq:intrep2} restricted to the domain $E^{(n)}$. As in \cite[Theorem 6.3]{Astrauskas08}, we may first use \cref{eq:o1} to integrate out over $s_2^{(n)}$, and then apply Laplace's method to approximate the resulting integral over $s_1^{(n)}$:
\begin{align*}
& \int_{E_2^{(n)}} \prod_{y \in \bar{B}_2} f_\xi(s_y) \left[ \int_{E_1^{(n)}} \exp \left\{ -R_t^{(n)}(A^{(n)}_t + xd_t; s^{(n)}) \right\}   \, ds_1^{(n)} \right] \, ds_2^{(n)} \\
& \qquad = \int_{E_1^{(n)}} \exp \left\{ -R_t^{(n)}(A^{(n)}_t + xd_t; (s_1^{(n)}; 0)  \right\}  \, ds_1^{(n)} (1 + o(1)) = t^{-d} e^{-x} (1 + o(1))
\end{align*}
with the last line following from an application of Laplace's method to the integral, noticing that the determinant of the Hessian matrix of $R_t^{(n)}$ with respect to $s_1^{(n)}$, evaluated at a point in $E_1^{(n)}\times\{0\}$, is
asymptotically $ \prod_{y \in \bar{B}_1}  \partial^2_{s_y}R_t^{(n)}
|_{(A_t^{(n)}; (1;0))}  \,. $

Similarly, by \cref{eq:gap}, the integral in \cref{eq:intrep1} over the domain excluding $E^{(n)}$ can be bounded above by
\begin{align*}
t^{-d} e^{-x} \int_{\mathbb{R}_+^{|\bar{B}_1 \cup \bar{B}_2|} \setminus E^{(n)} }  \exp \left\{ - a_t^\text{c}  \min_{y \in\bar{B}_1 \cup \bar{B}_2} |s_y - 1|^2  ) \right\} \, ds^{(n)} =  o( t^{-d} e^{-x} ) \,.
\end{align*}
Together, these two bounds give \Cref{prop:asympt}. 
\end{proof}

\subsection{Constructing the point processes}
We now construct the point processes we shall need to consider. For each $n \in \mathbb{N}$ such that $n \leq j$ and each $z \in V_t$, denote
$$ X^{(n)}_{t, z} := \frac{ \tilde{\lambda}_t^{(n)}(z) - A^{(n)}_{r_t}}{d_{r_t}} \qquad \text{and} \qquad \mathcal{N}^{(n)}_{t} := \sum_{z \in V_t} \id_{(z r_t^{-1}, X^{(n)}_{t, z})}  \,.$$
For each $\tau \in \mathbb{R}$ and $q > 0$ let 
$$ H_\tau^q := \{ (x, y) \in \dot{\mathbb{R}}^d \times (-\infty, \infty] : y \geq q|x| + \tau \}$$
where $\dot{\mathbb{R}}^d$ denotes the one-point compactification of Euclidean space.

\begin{proposition}
For each $n \in \mathbb{N}$ such that $n \leq j$, as $t \to \infty$,
$$ \mathcal{N}^{(n)}_{t}|_{H_\tau^q} \Rightarrow \mathcal{N} \qquad \text{in law}$$
where $\mathcal{N}$ is a point process on $H_\tau^q$ with intensity measure $ \chi(dx, dy) = dx \otimes e^{-y} dy$.
\end{proposition}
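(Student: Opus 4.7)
I would apply Kallenberg's theorem for convergence to a Poisson point process: since $\mathcal{N}$ is simple, it suffices to show, for every relatively compact rectangle $A = I \times [c, c'] \subset H_\tau^q$ which is a $\chi$-continuity set, that (i) $\EE[\mathcal{N}^{(n)}_t(A)] \to \chi(A) = |I|(e^{-c} - e^{-c'})$ and (ii) $\PP(\mathcal{N}^{(n)}_t(A) = 0) \to \exp(-\chi(A))$, and then extend by a standard approximation argument to vague convergence on $H_\tau^q$.

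For (i), I use stationarity of $\{\tilde{\lambda}_t^{(n)}(z)\}_{z \in V_t}$ to write
$$\EE[\mathcal{N}^{(n)}_t(A)] = \left| r_t I \cap V_t \right| \cdot \PP\left(X^{(n)}_{t, 0} \in [c, c']\right).$$
The cardinality is $r_t^d |I|(1 + o(1))$, while \Cref{prop:asympt} applied at both $c$ and $c'$ yields $\PP(X^{(n)}_{t, 0} \in [c, c']) = r_t^{-d}(e^{-c} - e^{-c'})(1 + o(1))$; the product converges to $\chi(A)$.

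For (ii), the key difficulty is the $2n$-dependence among $\{\tilde{\lambda}_t^{(n)}(z)\}$, so a direct Poisson approximation for independent variables does not apply. I would use the factorial moment method: for each $k \geq 1$,
$$\EE\left[\binom{\mathcal{N}^{(n)}_t(A)}{k}\right] = \sum_{\{z_1, \ldots, z_k\} \subset r_t I \cap V_t} \PP\left(X^{(n)}_{t, z_i} \in [c, c'] \text{ for all } i \leq k\right),$$
and split the sum into a \emph{separated} part, where $|z_i - z_j| > 2n$ for all $i \neq j$, and a \emph{clustered} remainder. On the separated part the events are mutually independent (by the dependency-range bound stated in the excerpt), so the joint probability factorises, and combined with the counting estimate $\sim (r_t^d |I|)^k / k!$ this contribution tends to $\chi(A)^k / k!$, exactly the $k$-th Poisson factorial moment. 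For the clustered remainder, the number of offending $k$-subsets is $O(r_t^{d(k-1)})$, since fixing one close pair consumes a factor of $r_t^d$ in the available choices; combined with a joint tail bound $\PP(X^{(n)}_{t, z} > c, X^{(n)}_{t, z'} > c) = O(r_t^{-2d})$ for clustered pairs $|z - z'| \leq 2n$ and single-site bounds on the remaining sites, the clustered contribution is $O(r_t^{-d}) \to 0$. Hence all factorial moments converge to those of a $\mathrm{Poisson}(\chi(A))$ variable, yielding (ii).

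\textbf{Main obstacle.} The crux is the joint tail bound $\PP(X^{(n)}_{t, z} > c, X^{(n)}_{t, z'} > c) = O(r_t^{-2d})$ for clustered pairs. The naive bound $O(r_t^{-d})$ (dominating by a single-site probability) is insufficient: combined with $O(r_t^{d(k-1)})$ clustered configurations it leaves a non-vanishing $O(1)$ contribution. The sharper bound requires an asymptotic decorrelation argument parallel to the proof of \Cref{prop:asympt}: if both $\tilde{\lambda}_t^{(n)}(z)$ and $\tilde{\lambda}_t^{(n)}(z')$ are unusually large, then $\xi$ must be large at both $z$ and $z'$, but the $L_t$-puncturing removes the influence of each such site from the other's local Hamiltonian, so the two exceedance events are effectively governed by independent tails of $\xi$. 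Establishing this, analogously to the integral representation and Laplace-method analysis already used to prove \Cref{prop:asympt}, is the main technical work.
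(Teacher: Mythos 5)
Your route — Kallenberg's theorem plus a factorial-moment calculation — is a genuinely different path from the paper's, which instead checks Leadbetter's mixing conditions for stationary random fields (\cite[Theorem 5.7.2]{Leadbetter83}, as adapted in \cite[Lemma 6]{Astrauskas07}). Because $\{\tilde{\lambda}_t^{(n)}(z)\}$ has bounded dependency range $2n$, Leadbetter's result reduces the whole problem, given \Cref{prop:asympt}, to a single local dependence condition on pairs within distance $2n$, so no general-$k$ clustering analysis is required. Both routes need a bound on the clustered-pair probability, and here you have correctly noticed that $O(r_t^{-d})$ is not enough, but you have overstated both what bound is needed and how hard it is to obtain. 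By \Cref{lem:minmax} we have $\tilde{\lambda}_t^{(n)}(w)\le\max\{L_t,\xi(w)\}+2d$, and since eventually $L_t+2d < A^{(n)}_{r_t}+x d_{r_t}$, the joint exceedance event for $0<|z-z'|\le 2n$ forces $\xi(z)>L_t$ and $\xi(z')>L_t$; as $\xi$ is i.i.d.\ this gives $\PP\bigl(X^{(n)}_{t,z}>c,\, X^{(n)}_{t,z'}>c\bigr)\le\PP(\xi>L_t)^2=|V_t|^{-2(1-\theta)}$, with no Laplace-method decorrelation required — this is precisely the content of \Cref{lem:assep}. This bound is \emph{weaker} than your target $O(r_t^{-2d})$ (since $|V_t|\gg r_t^d$ and $\theta>0$), yet still ample: the paper's local dependence sum becomes $\le C(n)|V_t|^{2\theta-1}\to 0$, and your clustered remainder for one close pair and $k-2$ isolated sites is $O(r_t^{d(k-1)})\cdot|V_t|^{-2(1-\theta)}\cdot O(r_t^{-d(k-2)})=O\bigl(r_t^d|V_t|^{-2(1-\theta)}\bigr)\to0$, in both cases precisely because $\theta<1/2$. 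So your plan would go through, but the ``main technical work'' you flag collapses to an elementary consequence of \Cref{lem:minmax} and the i.i.d.\ structure of $\xi$.
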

\begin{proof}
As in \cite[Lemma 6]{Astrauskas07}, this follows from \Cref{prop:asympt} after checking Leadbetter's mixing conditions modified for random fields (\cite[Theorem 5.7.2]{Leadbetter83}). Again as in \cite[Lemma 6]{Astrauskas07}, since the set $\{\tilde{\lambda}_t^{(n)}(0)\}$ has a dependency range $2n$, it is sufficient to check the following local dependence condition: 
$$ |V_t| \sum_{z : 0 < |z| \le 2n} \PP \left( \tilde{\lambda}_t^{(n)}(0) >
A^{(n)}_{r_t} + x d_{r_t} ,  \tilde{\lambda}_t^{(n)}(z) > A^{(n)}_{r_t} + x d_{r_t} \right) \to 0 $$
as $t \to \infty$, for any $x \in \mathbb{R}$. This is satisfied, since by \Cref{lem:assep} the set $\Pi^{(L_{t})}$ is eventually $2n$-separated almost surely, and so either $\tilde{\lambda}_t^{(n)}(0)$ or $\tilde{\lambda}_t^{(n)}(z)$ is bounded above by $L_{t} < A^{(n)}_{r_t} + x d_{r_t}$ eventually, for any $x$. Observe also that the restriction of $\mathcal{N}^{(n)}_t$ to $H_\tau^q$ ensures that the intensity measure of the limit process $\mathcal{N}$ is such that every relatively compact set has finite measure.
\end{proof}

We transform the point process $\mathcal{N}$ to a new point process involving $\tilde{\Psi}^{(n)}_{t}$.  For technical reasons, we shall need to consider a certain generalisation of the functionals $\tilde{\Psi}^{(n)}_{t}$. So for each $n \in \mathbb{N}$ such that $n \leq j$, $c \in \mathbb{R}$ and sufficiently large $t$, define the functional $ \tilde{\Psi}^{(n)}_{t, c}: V_t \to \mathbb{R}$ by
$$ \tilde{\Psi}^{(n)}_{t, c}(z) := \tilde{\lambda}_t^{(n)}(z) - \frac{|z|}{\gamma t} \log \log t + c \frac{|z|}{t} \,.$$
Let $Z_{t, c}^{(1, n)} := \argmax_{z} \Psi^{(n)}_{t, c}$ and $Z_{t, c}^{(2, n)} := \argmax_{z \neq Z_{t, c}^{(1, n)}} \Psi^{(n)}_{t, c}$. Note that for any $t$ these are well-defined almost surely, since $V_t$ is finite. Further, for each $z \in V_t$ define
$$ Y^{(n)}_{t, c, z} := \frac{ \tilde{\Psi}^{(n)}_{t, c}(z) - A^{(n)}_{r_t} }{d_{r_t}}  \qquad \text{and} \qquad \mathcal{M}^{(n)}_{t, c} := \sum_{z \in V_t} \id_{(z r_t^{-1} , Y^{(n)}_{t, c, z})}  \,.$$ 
Finally, for each $\tau \in \mathbb{R}$ and $\alpha > -1$ let  
$$ \hat{H}_\tau^\alpha := \{ (x, y) \in \dot{\mathbb{R}}^{d+1} : y \geq \alpha|x| + \tau \} \,.$$

\begin{proposition}
\label{prop:pp1}
For each $n \in \mathbb{N}$ such that $n \leq j$ and $c \in \mathbb{R}$, as $t \to \infty$,
$$ \mathcal{M}^{(n)}_{t, c}|_{\hat{H}_\tau^\alpha} \Rightarrow \mathcal{M} \qquad \text{in law} $$
where $\mathcal{M}$ is a point process on $\hat{H}_{\tau}^\alpha$ with intensity measure $ \nu(dx, dy) = dx \otimes e^{- y - |x|} dy$.
\end{proposition}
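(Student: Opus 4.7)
The plan is to derive the result from the preceding proposition (convergence of $\mathcal{N}^{(n)}_t$ to the Poisson process $\mathcal{N}$ with intensity $dx \otimes e^{-y}dy$) by applying the continuous mapping theorem to the coordinate shift
\[
T(x, y) := (x,\, y - |x|),
\]
which, via the change of variables $y' = y - |x|$, transforms $dx \otimes e^{-y} dy$ into exactly the desired intensity $dx \otimes e^{-y - |x|} dy$.

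The first step is a calculation: writing $x := z/r_t$ for the rescaled spatial coordinate and comparing the definitions of $\tilde{\Psi}^{(n)}_{t, c}$ and $Y^{(n)}_{t, c, z}$ with those of $\tilde{\lambda}^{(n)}_t$ and $X^{(n)}_{t, z}$, one obtains
\[
Y^{(n)}_{t, c, z} = X^{(n)}_{t, z} - \frac{r_t |x|}{\gamma t d_{r_t}} \log \log t + c \frac{r_t |x|}{t d_{r_t}}.
\]
Using $r_t = t (d \log t)^{1/\gamma - 1}/\log \log t$ and the fact that $\log r_t \sim \log t$ (so $d_{r_t} \sim (d \log t)^{1/\gamma - 1}/\gamma$), a direct computation shows $r_t \log \log t / (\gamma t d_{r_t}) \to 1$ and $c r_t/(t d_{r_t}) \to 0$, giving
\[
Y^{(n)}_{t, c, z} = X^{(n)}_{t, z} - |x| + o(1),
\]
with the error uniform over $x$ in any bounded subset of $\mathbb{R}^d$. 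The second step is topological: since the inequality $y \geq (\alpha + 1)|x| + \tau$ defining $H_\tau^{\alpha + 1}$ is equivalent to $y - |x| \geq \alpha |x| + \tau$, the map $T$ is a homeomorphism from $H_\tau^{\alpha + 1}$ onto $\hat{H}_\tau^\alpha$; and since $\alpha + 1 > 0$, the preceding proposition applies on $H_\tau^{\alpha+1}$. By the continuous mapping theorem the image $T(\mathcal{N}^{(n)}_t|_{H_\tau^{\alpha+1}})$ converges in law to the Poisson process on $\hat{H}_\tau^\alpha$ with the desired intensity.

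It remains to replace the transformed process $T(\mathcal{N}^{(n)}_t|_{H_\tau^{\alpha+1}})$ by the target $\mathcal{M}^{(n)}_{t, c}|_{\hat{H}_\tau^\alpha}$: their atoms share spatial coordinates $z/r_t$ and differ only by the uniform $o(1)$ vertical error established above. The main obstacle is justifying that this infinitesimal perturbation does not affect the weak limit — a priori it could push atoms across the boundary $\{y = \alpha|x| + \tau\}$ of $\hat{H}_\tau^\alpha$, or move atoms in from or out to the point at infinity. This is handled by a standard truncation argument: on any compact $\{|x| \leq K,\ y \leq K\} \subset \hat{H}_\tau^\alpha$, the perturbation eventually moves no atom across any fixed level curve by more than $o(1)$, and the limiting intensity $\nu$ assigns zero mass to the boundary $\{y = \alpha|x| + \tau\}$; meanwhile $\nu$ is finite on such compacts, providing the tightness needed to exhaust $\hat{H}_\tau^\alpha$. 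Combining the two steps then yields the claimed weak convergence $\mathcal{M}^{(n)}_{t, c}|_{\hat{H}_\tau^\alpha} \Rightarrow \mathcal{M}$.
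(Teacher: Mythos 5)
Your approach is essentially the same as the paper's: both rewrite $\mathcal{M}^{(n)}_{t,c}$ as the image of $\mathcal{N}^{(n)}_t$ under the shift $(x,y) \mapsto (x, y - (1+o(1))|x|)$, verify the asymptotics of the coefficients exactly as you do, and then pass to the limit in the mapping and the point process simultaneously. The difference is how that last step is packaged. The paper picks an intermediate pair $\alpha'$, $q$ with $0 < \alpha'+1 < q < \alpha+1$; because $q < \alpha+1$, the inclusion $\{Y_{t,c,z} \geq \alpha|x|+\tau\} \subseteq \{X_{t,z} \geq q|x|+\tau\}$ holds \emph{exactly} eventually, so $\mathcal{M}^{(n)}_{t,c}|_{\hat{H}_\tau^\alpha} = (\mathcal{N}^{(n)}_t|_{H_\tau^q}\circ K_{t,c}^{-1})|_{\hat{H}_\tau^\alpha}$ is an identity (no approximation error to track), and the remaining work of taking the limit in $K_{t,c}$ and $\mathcal{N}^{(n)}_t$ together is delegated to a cited lemma from \cite{vanDerHofstad08}. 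You instead fix the restriction level at $H_\tau^{\alpha+1}$ and then invoke a truncation argument to absorb the resulting approximation.

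That is where your write-up has a loose thread. Since the $o(1)$ in $Y^{(n)}_{t,c,z} = X^{(n)}_{t,z} - |x|(1+o(1))$ is not one-sided, there can be atoms with $X_{t,z}$ slightly below the boundary $(\alpha+1)|x|+\tau$ whose image nonetheless lies in $\hat{H}_\tau^\alpha$; these are discarded by $\mathcal{N}^{(n)}_t|_{H_\tau^{\alpha+1}}$ before you apply $T$, so $T(\mathcal{N}^{(n)}_t|_{H_\tau^{\alpha+1}})$ can genuinely lack atoms that $\mathcal{M}^{(n)}_{t,c}|_{\hat{H}_\tau^\alpha}$ has. Using the strictly smaller cone $H_\tau^q$ with $q < \alpha+1$, as the paper does, makes the set inclusion eventual and exact, and is therefore the cleaner way to set up the truncation. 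Your observation that $\nu$ charges no mass to the boundary (and that $\nu(\hat{H}_\tau^\alpha) < \infty$ because $\alpha > -1$) does supply the ingredients to patch this, and this is essentially what \cite[Lemma 2.5]{vanDerHofstad08} formalises, but as written the step from the restricted-and-mapped process to the target needs that extra slack in the restriction level to close cleanly.
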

\begin{remark}
Although we prove \Cref{prop:pp1} for each $c \in \mathbb{R}$, we shall only apply it to $c = 0$ and one other value of $c$ that will be determined in \Cref{corry:upperbound}.
\end{remark}
\begin{proof}
This follows as in \cite[Lemma 3.1]{Sidorova12} (although note that, due to a different choice of $d_t$, the intensity of the point process in \cite[Lemma 3.1]{Sidorova12} differs by a constant). First choose a pair $\alpha'$ and $q$ such that $0 < \alpha' + 1 < q < \alpha + 1$ and notice that 
$$ \mathcal{M}^{(n)}_{t, c}|_{\hat{H}_\tau^\alpha} = \left(
\mathcal{N}^{(n)}_{t}|_{H_\tau^q} \circ K^{-1}_{t, c} \right)|_{\hat{H}_\tau^\alpha}$$
where $K_{t, c} \ : H_\tau^q \to \hat{H}_\tau^{\alpha'}$ is defined by
$$K_{t, c} (x, y) \mapsto 
\begin{cases}
(x, y - (1 + o(1))|x|), & \text{if } x,y \neq \infty \\
\infty & \text{otherwise}
\end{cases} \,.$$
It was proved in \cite[Lemma 2.5]{vanDerHofstad08} that one can pass to the limit simultaneously in the mapping $K_{t, c}$ and the point process $\mathcal{N}^{(n)}_{t}$ to obtain
$$ \mathcal{M}^{(n)}_{t, c}|_{\hat{H}_\tau^\alpha} \Rightarrow \mathcal{M} := \left(
\mathcal{N} \circ K^{-1} \right) |_{\hat{H}_\tau^{\alpha}}$$
in law, where $K \ : H_\tau^q \to \hat{H}_\tau^{\alpha'}$ is defined by
$$K(x, y) \mapsto 
\begin{cases}
(x, y - |x|), & \text{if } x,y \neq \infty \\
\infty & \text{otherwise}
\end{cases} \,.$$
The density of $\mathcal{M}$ is then $\chi \circ K^{-1} = \nu$, restricted to $\hat{H}_\tau^{\alpha}$.
\end{proof}

We now use the point process $\mathcal{M}$ to analyse the joint distribution of the random variables $Z_{t, c}^{(1, n)}$, $Z_{t, c}^{(2, n)}$, $\tilde{\Psi}^{(n)}_{t, c}(Z_{t, c}^{(1, n)})$ and $\tilde{\Psi}^{(n)}_{t, c}(Z_{t, c}^{(2, n)})$. 

\begin{proposition} \label{prop:limit}
For each $n \in \mathbb{N}$ such that $n \leq j$ and each $c \in \mathbb{R}$, as $t \to \infty$
$$ \left(  \frac{Z_{t, c}^{(1, n)}}{r_t}, \frac{Z_{t, c}^{(2, n)}}{r_t},  \frac{ \tilde{\Psi}^{(n)}_{t, c}(Z_{t, c}^{(1, n)}) - A^{(n)}_{r_t}}{d_{r_t}} , \frac{\tilde{\Psi}^{(n)}_{t, c} (Z_{t, c}^{(2, n)}) - A^{(n)}_{r_t}}{d_{r_t}}  \right)    \\
$$
converges in law to a random vector with density
$$ p(x_1, x_2, y_1, y_2) =  \exp \{- (y_1 + y_2) - |x_1| - |x_2|) - 2^d e^{-
y_2} \} \id_{\{y_1 > y_2\}} \,.$$
\end{proposition}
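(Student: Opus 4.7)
The plan is to view the target random vector as $\Phi(\mathcal{M}^{(n)}_{t, c})$, where $\Phi$ is the functional returning the two highest atoms (by $y$-coordinate) of a locally finite point measure, and then invoke \Cref{prop:pp1} together with a standard void-probability calculation for Poisson point processes.

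First I would fix any $\alpha > -1$ and note that $\nu(\hat{H}_\tau^\alpha) = e^{-\tau} \int_{\mathbb{R}^d} e^{-(1+\alpha)|x|}\,dx$ is finite for every $\tau$, so $\mathcal{M}|_{\hat{H}_\tau^\alpha}$ is almost surely a finite simple point measure. The functional $\Phi$ is continuous at such configurations away from ties and coincidences with $\partial \hat{H}_\tau^\alpha$, both of which are $\mathcal{M}$-null events. Combining \Cref{prop:pp1} with the continuous mapping theorem therefore yields
\[
\Phi\bigl(\mathcal{M}^{(n)}_{t, c}\big|_{\hat{H}_\tau^\alpha}\bigr) \Rightarrow \Phi\bigl(\mathcal{M}|_{\hat{H}_\tau^\alpha}\bigr)
\]
in law. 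To upgrade this to convergence of the actual top two maximisers of $Y^{(n)}_{t, c, \cdot}$, I would send $\tau \to -\infty$: since $\nu(\hat{H}_\tau^\alpha) \to \infty$, both the limit process and, by \Cref{prop:pp1}, the pre-limit processes contain at least two atoms in $\hat{H}_\tau^\alpha$ with probability tending to one. A diagonal argument then produces convergence of the full four-dimensional vector in the statement to $\Phi(\mathcal{M})$.

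The density of $\Phi(\mathcal{M})$ is then a standard Poisson calculation: conditioning on the highest atom being at $(x_1, y_1)$ and the second-highest at $(x_2, y_2)$ with $y_1 > y_2$, the joint density factorises as
\[
\nu(dx_1, dy_1)\,\nu(dx_2, dy_2)\,\id_{\{y_1 > y_2\}}\,\exp\bigl(-\nu\bigl(\{(x, y) : y > y_2\}\bigr)\bigr).
\]
The void-probability exponent evaluates as
\[
\nu\bigl(\{(x,y) : y > y_2\}\bigr) = \int_{\mathbb{R}^d} \int_{y_2}^\infty e^{-y - |x|}\,dy\,dx = 2^d e^{-y_2},
\]
since $\int_\mathbb{R} e^{-|s|}\,ds = 2$ coordinatewise under the $\ell^1$ norm, and substituting $\nu(dx, dy) = dx \otimes e^{-y - |x|}\,dy$ into the preceding display recovers exactly the stated density.

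The main technical subtlety lies in the tightness step: justifying that truncating $\mathcal{M}^{(n)}_{t, c}$ to $\hat{H}_\tau^\alpha$ does not omit the actual top two maximisers of $Y^{(n)}_{t, c, \cdot}$ over the full torus. This has to be handled carefully by first letting $t \to \infty$ and then $\tau \to -\infty$, exploiting \Cref{prop:pp1} (or equivalently the extremal input of \Cref{prop:asympt}) to quantify uniformly in $t$ the residual probability that either maximiser sits below the cone boundary $y = \alpha|x| + \tau$.
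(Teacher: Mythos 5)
Your argument reproduces, with explicit detail, the void-probability computation that the paper delegates to \cite{Sidorova12} (Proposition 3.2): invoke \Cref{prop:pp1}, apply the continuous-mapping theorem to the top-two functional on the restricted half-cone $\hat{H}_\tau^\alpha$, let $\tau \to -\infty$, and integrate out with the Poisson void probability to obtain the stated density. One refinement worth making explicit when handling the truncation subtlety you flag is to take $\alpha \leq 0$ rather than a generic $\alpha > -1$: then every atom of $\mathcal{M}^{(n)}_{t,c}$ outside $\hat{H}_\tau^\alpha$ has $y$-coordinate strictly below $\tau$, so the global top two maximisers coincide with the truncated ones as soon as the second-highest value exceeds $\tau$, which makes the diagonal step immediate.
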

\begin{proof}
\Cref{prop:limit} follows from the point process density in \Cref{prop:pp1} using the same computation as in \cite[Proposition 3.2]{Sidorova12}.
\end{proof}

\subsection{Properties of the localisation site} 
In this subsection we use the results from the previous subsection to analyse
the localisation sites $Z_{t, c}^{(1, j)}$ and $Z_t^{(1, \rho)}$, and in the
process complete the proof of \Cref{thm:main2}.

For each $c \in \mathbb{R}$, introduce the events
$$\mathcal{G}^{(n)}_{t,c} := \{\tilde{\Psi}^{(n)}_{t, c}(Z_{t, c}^{(1, n)}) - \tilde{\Psi}^{(n)}_{t, c}(Z_{t, c}^{(2, n)}) > d_t e_t \} \, , $$
$$ \mathcal{H}_t^{(n)} := \{  r_t f_t < |Z_{t}^{(1, n)}| < r_t g_t \} \quad \text{and} \quad \mathcal{I}_t^{(n)} := \{ \Psi_t^{(n)}(Z_{t}^{(1, n)}) > a_t(1-f_t) \} $$
and the event
\begin{align}
\label{eq:defevent}
\mathcal{E}_{t,c} :=  \mathcal{S}^{(j)}_t(Z_{t}^{(1, j)})  \cap \mathcal{S}_t^{(\rho)}(Z_{t}^{(1, \rho)}) \cap \mathcal{G}^{(j)}_{t,0} \cap \mathcal{G}^{(j)}_{t,c} \cap \mathcal{H}_t^{(j)} \cap \mathcal{I}_t^{(j)}
\end{align}
which act to collect the relevant information that we shall later need.

\begin{corollary} \label{corry:eventj}
For each $c \in \mathbb{R}$, as $t \to \infty$
\begin{align*}
\PP(\mathcal{E}_{t,c}) \to 1 \,.
\end{align*}
\end{corollary}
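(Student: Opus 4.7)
The plan is to show $\PP(\mathcal{E}_{t,c}) \to 1$ by verifying that each of the six events whose intersection defines $\mathcal{E}_{t,c}$ in \eqref{eq:defevent} has probability tending to one; a union bound on complements then closes the argument.

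The four events $\mathcal{G}^{(j)}_{t,0}$, $\mathcal{G}^{(j)}_{t,c}$, $\mathcal{H}_t^{(j)}$, and $\mathcal{I}_t^{(j)}$ can be read off directly from the joint limit in \Cref{prop:limit}. For $\mathcal{G}^{(j)}_{t,c}$, the rescaled gap $(\tilde{\Psi}^{(j)}_{t,c}(Z^{(1,j)}_{t,c}) - \tilde{\Psi}^{(j)}_{t,c}(Z^{(2,j)}_{t,c}))/d_{r_t}$ converges in law to $y_1 - y_2$, which is a.s.\ strictly positive under the limit density; since $\log r_t \sim \log t$ yields $d_{r_t} \sim d_t$, the required threshold $d_t e_t / d_{r_t} \to 0$ and the gap event occurs with probability approaching one. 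The event $\mathcal{H}_t^{(j)}$ follows from the nondegeneracy of the marginal of $Z^{(1,j)}_t/r_t$ together with $f_t \to 0$ and $g_t \to \infty$. For $\mathcal{I}_t^{(j)}$, the marginal yields $\tilde{\Psi}^{(j)}_t(Z^{(1,j)}_t) = A^{(j)}_{r_t} + O_p(d_{r_t}) = a_t + O(d_t \log\log t) + O_p(d_t)$, and comparison with $a_t(1-f_t)$ requires $a_t f_t \gg d_t \log \log t$; this holds because $f_t \gg 1/(h_t \log \log t)$ and $h_t \to 0$, so $a_t f_t / (d_t \log\log t) \gg \log t / (h_t (\log\log t)^2) \to \infty$.

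The main obstacle is the two $\mathcal{S}$-events, since a naive union bound over all of $V_t$ does not work: \Cref{prop:asympt}(b) makes each summand $o(r_t^{-d})$, but $|V_t|/r_t^d$ diverges like $(\log t)^d$. My strategy is to restrict the union bound to a compact window in the rescaled coordinates $z/r_t$, exploiting the fact that the maximisers $Z^{(1,j)}_t$ and $Z^{(1,\rho)}_t$ concentrate on such a window by \Cref{prop:limit}. For $n \in \{j, \rho\}$ and a compact $K \subset \hat{H}_\tau^\alpha$, consider the thinned count $N^{(n)}_{t}(K) := \sum_{z \in V_t}\id_{\bar{\mathcal{S}}^{(n)}_t(z)}\id_{\{(z/r_t,\, Y^{(n)}_{t,0,z}) \in K\}}$. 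Bounding each term by the one-site probability from \Cref{prop:asympt}(b) applied at the scale $r_t$ and summing over the $O(r_t^d |\pi_1(K)|)$ relevant sites yields $\EE[N^{(n)}_t(K)] = o(1)$, so Markov gives $\PP(N^{(n)}_t(K) \geq 1) \to 0$.

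To conclude, for any $\varepsilon > 0$ I would fix $K$ large enough that the limit distribution from \Cref{prop:limit} for $(Z^{(1,n)}_t/r_t, Y^{(n)}_{t,0,Z^{(1,n)}_t})$ places mass at least $1-\varepsilon$ on $K$. On the intersection of the events that this pair lies in $K$ and that no bad point falls in $K$, the maximiser must satisfy $\mathcal{S}^{(n)}_t(Z^{(1,n)}_t)$; letting $\varepsilon \to 0$ after taking limsup in $t$ handles both $n = j$ and $n = \rho$. The delicate technical point I anticipate is transferring \Cref{prop:asympt}(b), stated with parameter $t$, to the scale $r_t$ while the event $\bar{\mathcal{S}}^{(n)}_t$ retains its $t$-indexed scaling functions; since $\log r_t \sim \log t$ all relevant scaling functions are asymptotic to one another, so the Laplace-method proof of \Cref{prop:asympt} should be robust to this substitution.
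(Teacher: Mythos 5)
Your proof is correct and follows essentially the same route as the paper, whose entire proof is the one-liner ``This follows from Propositions~\ref{prop:asympt} and~\ref{prop:limit}, since $A^{(n)}_{r_t} \sim a_t$ and $d_{r_t} \sim d_t$.'' You have simply unpacked that citation: the events $\mathcal{G}^{(j)}_{t,0}$, $\mathcal{G}^{(j)}_{t,c}$, $\mathcal{H}_t^{(j)}$, $\mathcal{I}_t^{(j)}$ come out of \Cref{prop:limit} together with the asymptotics $A^{(n)}_{r_t}\sim a_t$, $d_{r_t}\sim d_t$ exactly as you describe, and the two $\mathcal{S}$-events come from \Cref{prop:asympt} combined with the tightness (from \Cref{prop:limit}) of the rescaled maximiser on a compact window, which is precisely your thinned-count/compactification step.
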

\begin{proof}
This follows from Propositions~\ref{prop:asympt} and~\ref{prop:limit}, since $A^{(n)}_{r_t} \sim a_t$ and $d_{r_t} \sim d_t$.
\end{proof}

\begin{proposition}
\label{prop:zeqzc}
For any $c \in \mathbb{R}$, on the event $\mathcal{E}_{t,c}$
$$ Z_{t,c}^{(1, j)} = Z_{t}^{(1, j)} $$ holds eventually.
\end{proposition}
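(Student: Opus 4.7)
The plan is to argue by contradiction, supposing that $w := Z_{t,c}^{(1,j)}$ differs from $z_1 := Z_t^{(1,j)}$ on $\mathcal{E}_{t,c}$ and showing that $|w|$ must then be both too large (forced by the gap events) and too small (forced by point-process tightness), a contradiction for $t$ large.

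The first step is to extract the key inequality from the two gap events. Since $z_1 \ne w$, the event $\mathcal{G}^{(j)}_{t,0}$ gives $\tilde{\Psi}^{(j)}_t(z_1) - \tilde{\Psi}^{(j)}_t(w) > d_t e_t$, while $\mathcal{G}^{(j)}_{t,c}$ gives $\tilde{\Psi}^{(j)}_{t,c}(w) - \tilde{\Psi}^{(j)}_{t,c}(z_1) > d_t e_t$. Summing these and using the identity $\tilde{\Psi}^{(j)}_{t,c}(\cdot) - \tilde{\Psi}^{(j)}_t(\cdot) = c|\cdot|/t$ collapses the four terms to
\[
\frac{c(|w| - |z_1|)}{t} > 2 d_t e_t.
\]
For $c \le 0$ the left-hand side is non-positive, an immediate contradiction; so I may restrict to $c > 0$. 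Using $t d_t = r_t \log\log t/\gamma$ and $|z_1| \ge 0$, this yields the lower bound $|w| > 2 r_t e_t \log\log t/(\gamma c)$.

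Next I would derive an incompatible upper bound on $|w|$. Applying \Cref{prop:pp1} to the perturbed functional $\tilde{\Psi}^{(j)}_{t,c}$ by the same argument that underpins $\mathcal{H}_t^{(j)}$ (the limiting intensity $dx \otimes e^{-y-|x|}\,dy$ is $c$-independent) shows that $Z_{t,c}^{(1,j)}/r_t$ is tight, so $\PP(|Z_{t,c}^{(1,j)}| > r_t g_t) \to 0$ as $g_t \to \infty$. I would therefore augment $\mathcal{E}_{t,c}$ by this high-probability event, which preserves \Cref{corry:eventj}. On the augmented event $|w| < r_t g_t$, so combining with the lower bound forces $g_t > 2 e_t \log\log t/(\gamma c)$. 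But the scaling chain yields $h_t^2 > f_t h_t \gg 1/\log\log t$, hence $h_t \gg 1/\sqrt{\log\log t}$, and therefore $e_t \log\log t \ge g_t h_t \log\log t \gg g_t \sqrt{\log\log t} \gg g_t$, contradicting the required inequality for $t$ large.

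The main delicate point is the implicit augmentation of $\mathcal{E}_{t,c}$ by the tightness bound $\{|Z_{t,c}^{(1,j)}| < r_t g_t\}$, which is not written into \eqref{eq:defevent}. Establishing it amounts to repeating for the perturbed functional the point-process reasoning that underlies $\mathcal{H}_t^{(j)}$; since the limiting intensity in \Cref{prop:pp1} is $c$-independent, this goes through unchanged. With that in place the closing step is just an asymptotic comparison of the auxiliary scales, which is exactly what the chain $\max\{1/\log\log t, \kappa_t\} \ll f_t h_t \ll f_t \ll h_t \ll e_t/g_t$ is designed to make possible.
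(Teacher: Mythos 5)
Your argument is essentially the same as the paper's: assume the two maximisers differ, exploit both gap events $\mathcal{G}^{(j)}_{t,0}$ and $\mathcal{G}^{(j)}_{t,c}$, collapse the resulting inequalities using the identity $\tilde{\Psi}^{(j)}_{t,c}(\cdot)-\tilde{\Psi}^{(j)}_{t}(\cdot)=c|\cdot|/t$, and derive a contradiction via the scaling hierarchy $1/\log\log t \ll e_t/g_t$.

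The notable part of your write-up is that you correctly flag a point the paper glosses over. The paper's proof uses the bound $|Z_t^{(1,j)}|<r_t g_t$ (from $\mathcal{H}_t^{(j)}$), but when $c>0$ and $|Z_{t,c}^{(1,j)}|>|Z_t^{(1,j)}|$, the collapsed inequality $c(|Z_{t,c}^{(1,j)}|-|Z_t^{(1,j)}|)/t>2d_t e_t$ together with $|c||Z_t^{(1,j)}|/t<d_te_t$ only yields $|Z_{t,c}^{(1,j)}|\gg r_t g_t$, not a contradiction, unless one also controls $|Z_{t,c}^{(1,j)}|$. You propose augmenting $\mathcal{E}_{t,c}$ with $\{|Z_{t,c}^{(1,j)}|<r_tg_t\}$, which is justified by \Cref{prop:limit} (already stated for general $c$) and preserves \Cref{corry:eventj}; that is the correct repair, and arguably the natural reading of what the paper intended $\mathcal{H}_t^{(j)}$ to provide.

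One small inaccuracy: your claim that $c\le 0$ is ``an immediate contradiction'' because the left-hand side $c(|w|-|z_1|)/t$ is non-positive is not quite right — if $c<0$ and $|w|<|z_1|$ the left-hand side is positive. In that sub-case you still get a contradiction, but only by invoking $|z_1|<r_tg_t$ from $\mathcal{H}_t^{(j)}$ (so that $|c|(|z_1|-|w|)/t\le|c||z_1|/t<2d_te_t$ eventually), exactly as the paper does. This does not affect the overall correctness since the needed bound on $|z_1|$ is in $\mathcal{E}_{t,c}$, but the sub-case should not be waved away as sign-trivial.
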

\begin{proof}
Assume that $Z^{(1, j)}_{t, c} \neq Z_{t}^{(1, j)}$ and recall that $1/ \log \log t < e_t / g_t$ eventually. On the event $\mathcal{E}_{t,c}$, the statements
$$ \tilde{\Psi}^{(j)}_{t} (Z^{(1, j)}_{t}) - \tilde{\Psi}^{(j)}_{t} (Z_{t, c}^{(1, j)}) > d_t e_t  \quad \text{and} \quad \tilde{\Psi}^{(j)}_{t, c} (Z^{(1, j)}_{t,c}) - \tilde{\Psi}^{(j)}_{t, c} (Z_{t}^{(1, j)}) > d_t e_t $$
and, eventually,
$$|\tilde{\Psi}^{(j)}_{t}(Z_{t}^{(1, j)}) - \tilde{\Psi}^{(j)}_{t, c}(Z_{t}^{(1, j)})| = |c|\frac{|Z_{t}^{(1, j)}|}{t} < \gamma \frac{d_t g_t}{\log \log t} < d_t e_t $$
all hold, giving a contradiction.
\end{proof}

\begin{lemma}
\label{lem:jtorho}
For any $c \in \mathbb{R}$, on the event $\mathcal{E}_{t,c}$
$$ \tilde{\lambda}^{(j)}_{t}(Z_{t}^{(1, j)}) \geq \tilde{\lambda}^{(\rho)}_{t}(Z_{t}^{(1, j)}) \quad \text{and} \quad \tilde{\lambda}^{(j)}_{t}(Z_{t}^{(1, \rho)}) \geq \tilde{\lambda}^{(\rho)}_{t}(Z_{t}^{(1, \rho)})  $$
and
$$ \left( \tilde{\lambda}^{(j)}_{t}(Z_{t}^{(1, j)}) - \tilde{\lambda}^{(\rho)}_{t}(Z_{t}^{(1, j)}) \right) - \left( \tilde{\lambda}^{(j)}_{t}(Z_{t}^{(1, \rho)}) - \tilde{\lambda}^{(\rho)}_{t}(Z_{t}^{(1, \rho)}) \right) < d_t e_t  $$
all hold eventually.
\end{lemma}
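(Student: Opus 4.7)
The two non-strict inequalities are an immediate instance of monotonicity of the principal eigenvalue under enlargement of the Dirichlet domain. Indeed, for $n \le n'$ one may extend any $\ell^2$-normalised function supported in $B(z,n)$ by zero to $B(z,n')$: because $\tilde{\mathcal{H}}_{n'}^{(z)}$ agrees with $\tilde{\mathcal{H}}_{n}^{(z)}$ when applied to such extended functions, the Rayleigh quotient is preserved, and the min-max principle (applied to the largest eigenvalue) yields $\tilde{\lambda}^{(n')}_t(z) \ge \tilde{\lambda}^{(n)}_t(z)$. Applying this with $n = \rho$, $n' = j$ at $z = Z_t^{(1,j)}$ and $z = Z_t^{(1,\rho)}$ gives both claimed inequalities. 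In the case $j = \rho$ the summands in the second assertion both vanish and there is nothing to prove, so henceforth I assume $j = \rho + 1$.

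For the second assertion, I invoke \Cref{prop:pathexpforlambda} at both $z = Z_t^{(1,j)}$ and $z = Z_t^{(1,\rho)}$; note that on $\mathcal{I}_t^{(j)}$ together with the monotonicity just proved, both $\tilde{\lambda}^{(j)}_t(z)$ and $\tilde{\lambda}^{(\rho)}_t(z)$ exceed $L_{t,\varepsilon}$, placing $z$ in $\Pi^{(L_{t,\varepsilon})}$ and thus in the scope of the expansion. Writing
\[
\Phi_n(z,\mu) := \sum_{k=2}^{2j} \sum_{\Gamma^\ast_k(z,n)} \prod_{0 < i < k} \frac{1}{\mu - \xi(y_i)},
\]
the path expansion yields $\tilde{\lambda}^{(n)}_t(z) - \xi(z) = \Phi_n(z, \tilde{\lambda}^{(n)}_t(z)) + o(d_t e_t)$ for $n \in \{\rho, j\}$, and subtracting cancels the $\xi(z)$ term:
\[
\tilde{\lambda}^{(j)}_t(z) - \tilde{\lambda}^{(\rho)}_t(z) = \bigl(\Phi_j - \Phi_\rho\bigr)(z, \tilde{\lambda}^{(j)}_t(z)) + \bigl(\Phi_\rho(z, \tilde{\lambda}^{(j)}_t(z)) - \Phi_\rho(z, \tilde{\lambda}^{(\rho)}_t(z))\bigr) + o(d_t e_t).
\]
The second bracket is negligible: $\partial_\mu \Phi_\rho$ is a sum of products each carrying an extra factor $(\mu - \xi(y_i))^{-1} = O(a_t^{-1})$, so evaluated at $\mu \approx a_t$ it is of order $a_t^{-1}$ times the original path sum, and multiplying by the $O(d_t)$ gap $\tilde{\lambda}^{(j)}_t - \tilde{\lambda}^{(\rho)}_t$ yields $o(d_t e_t)$. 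Since $j = \rho + 1$, the only paths contributing to $\Phi_j - \Phi_\rho$ are those of length exactly $2j$ that leave $B(z,\rho)$; these necessarily travel straight out to a single vertex $y^\ast \in z + \bar{B}_2$ and straight back, visiting intermediate vertices whose distances from $z$ form the fixed deterministic sequence $1, 2, \ldots, \rho, \rho+1, \rho, \ldots, 2, 1$.

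It therefore suffices to show that $(\Phi_j - \Phi_\rho)(z, \tilde{\lambda}^{(j)}_t(z))$ takes the same value, to within $o(d_t e_t)$, at $z = Z_t^{(1,j)}$ and $z = Z_t^{(1,\rho)}$. The plan is to compare each to the value obtained by substituting the canonical profile $\xi_{\text{can}}(z+y) = a_t^{q(|y|)} \id_{\bar{B}_1}(y)$ and $\mu = A_t^{(j)}$, which by construction does not depend on $z$. On the event $\mathcal{S}_t^{(j)}(Z_t^{(1,j)}) \cap \mathcal{S}_t^{(\rho)}(Z_t^{(1,\rho)}) \cap \mathcal{I}_t^{(j)}$, both local profiles satisfy $\xi(z+y) = a_t^{q(|y|)}(1 + O(f_t))$ on $\bar{B}_1$ and $\xi(z+y) \le a_t^\eta$ on $\bar{B}_2$, while $\tilde{\lambda}^{(j)}_t(z) = a_t(1+O(f_t))$. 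The main obstacle is the error control: I will bound the partial derivatives of the individual path products with respect to each $\xi(y_i)$, showing that perturbing $\xi(y_i)$ from its canonical value $a_t^{q(|y_i|)}$ by $O(f_t \cdot a_t^{q(|y_i|)})$ changes each factor $(\mu - \xi(y_i))^{-1}$ by a relative amount $O(f_t \cdot a_t^{q(|y_i|)-1}) = o(f_t)$. Summing over the $O(1)$ paths and the $O(j)$ factors per path, the total perturbation is $o(f_t)$ times the canonical value of $\Phi_j - \Phi_\rho$, whose crude upper bound is of order $d_t$; since $f_t \ll h_t \ll e_t/g_t$ forces $f_t = o(e_t)$, this perturbation is $o(d_t e_t)$. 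The same estimate applies at both $Z_t^{(1,j)}$ and $Z_t^{(1,\rho)}$, so the difference of the two differences is $o(d_t e_t)$, giving the claim.
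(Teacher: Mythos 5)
Your handling of the first two inequalities (domain monotonicity via the min-max/Rayleigh principle) and the reduction to the case $j=\rho+1$ match the paper, and your overall strategy for the third assertion — invoke \Cref{prop:pathexpforlambda}, isolate the contribution of the new length-$2j$ paths that exit $B(z,\rho)$, and control perturbations — is also the paper's. However, the error analysis for the site $y^\ast \in \bar B_2$ (at distance $j$ from $z$) contains a genuine flaw. You write that perturbing each $\xi(y_i)$ from its canonical value $a_t^{q(|y_i|)}$ by $O(f_t\,a_t^{q(|y_i|)})$ yields a relative change $O(f_t\,a_t^{q(|y_i|)-1})=o(f_t)$ in the corresponding factor, and then conclude by the estimate $f_t = o(e_t)$. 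This is fine for $y_i\in\bar B_1$, but for $y^\ast\in\bar B_2$ the event $\mathcal{S}_t^{(n)}(z)$ controls $\xi(z+y^\ast)$ only to lie in $(0, a_t^\eta)$, and your canonical profile assigns it the value $0$; the perturbation is therefore $O(a_t^\eta)$, \emph{not} $O(f_t)$, and the relative change in the factor $(\mu-\xi(y^\ast))^{-1}$ is $O(a_t^{\eta-1})$, which need not be $o(f_t)$. The resulting dominant error term is of order $a_t^{\eta-1}\cdot a_t^{-(2j-2)} = a_t^{\eta-2j}$, which is exactly the bound $C a_t^\eta/(L_{t,\varepsilon}-L_t)^{2j}$ stated in the paper, and showing that this is $< d_t e_t$ hinges on the choice $\eta < 2\rho-\gamma+3 = 2j-\gamma+1$ (equivalently $\eta-2j < 1-\gamma$), not on $f_t = o(e_t)$. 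You never invoke this constraint on $\eta$, so as written your argument does not close.

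A second, smaller inaccuracy: you assert that the gap $\tilde{\lambda}^{(j)}_t(z) - \tilde{\lambda}^{(\rho)}_t(z)$ is $O(d_t)$ and use this to dispatch the second bracket $\Phi_\rho(z,\mu) - \Phi_\rho(z,\nu)$. In fact, the crude estimate from the new-path contribution gives a gap of order $a_t^{-(2j-1)}$, which for $\gamma \in (2j, 2j+1)$ (allowed when $j=\rho+1$, since then $2j \le \gamma < 2j+1$) is strictly larger than $d_t \sim a_t^{1-\gamma}/\gamma$. The second-bracket estimate nevertheless survives — $a_t^{-2}\cdot a_t^{-(2j-1)} = a_t^{-(2j+1)}$ is still $o(d_t e_t)$ since $\gamma < 2j+2$ — but the stated justification is not correct and should use the honest bound on the gap. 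The same over-optimistic $O(d_t)$ bound appears when you describe the ``crude upper bound'' on the canonical value of $\Phi_j-\Phi_\rho$; again the conclusion happens to hold, but the intermediate claim is wrong.
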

\begin{proof}
The first two statements follow from the min-max theorem for the principal eigenvalue, since $j \ge \rho$. For the third statement, we only need consider the case that $j = \rho + 1$. Then, the event $\mathcal{E}_{t,c}$ implies that $\xi(y) < a_t^\eta$ for all $y$ such that $|y - Z_{t}^{(1, j)}| = j$ or $|y - Z_{t}^{(1, \rho)}| = j$. By considering the path expansions in \Cref{prop:pathexpforlambda} for a constant $C > 0$,
\begin{align*}
& \left( \tilde{\lambda}^{(j)}_{t}(Z_{t}^{(1, j)}) - \tilde{\lambda}^{(\rho)}_{t}(Z_{t}^{(1, j)}) \right) - \left( \tilde{\lambda}^{(j)}_{t}(Z_{t}^{(1, \rho)}) - \tilde{\lambda}^{(\rho)}_{t}(Z_{t}^{(1, \rho)}) \right) < \frac{C a_t^{\eta}}{(L_{t, \epsilon} - L_t)^{2j}} < d_t e_t
\end{align*}
eventually, with the last equality holding since $\eta - 2j < 1 - \gamma$.
\end{proof}

\begin{corollary}
\label{corry:zeqz0}
For any $c \in \mathbb{R}$, on the event $\mathcal{E}_{t,c}$
$$ Z_{t}^{(1, j)} = Z_t^{(1, \rho)} \,.$$
\end{corollary}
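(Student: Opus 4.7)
The plan is a short contradiction argument that combines the spectral gap in the event $\mathcal{G}^{(j)}_{t,0}$ with the key inequality from \Cref{lem:jtorho}, exploiting the crucial fact that $\tilde{\Psi}^{(j)}_{t}$ and $\tilde{\Psi}^{(\rho)}_{t}$ differ only in the eigenvalue term, not in the distance penalty $-\frac{|z|}{\gamma t}\log\log t$.

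First, observe that if $j=\rho$ the statement is trivial, so we may assume $j=\rho+1$. Suppose for contradiction that $Z_{t}^{(1,j)}\neq Z_{t}^{(1,\rho)}$ on the event $\mathcal{E}_{t,c}$. Since $Z_{t}^{(1,\rho)}$ is then a site distinct from $Z_{t}^{(1,j)}$, it can attain at best the value $\tilde{\Psi}^{(j)}_{t}(Z_{t}^{(2,j)})$ for the functional $\tilde{\Psi}^{(j)}_{t}$. Invoking the gap provided by $\mathcal{G}^{(j)}_{t,0}\subseteq\mathcal{E}_{t,c}$ I would conclude
\[
\tilde{\Psi}^{(j)}_{t}(Z_{t}^{(1,j)})-\tilde{\Psi}^{(j)}_{t}(Z_{t}^{(1,\rho)})>d_t e_t,
\]
which, after expanding $\tilde{\Psi}^{(j)}_{t}$, rearranges to
\[
\tilde{\lambda}^{(j)}_{t}(Z_{t}^{(1,j)})-\tilde{\lambda}^{(j)}_{t}(Z_{t}^{(1,\rho)})>d_t e_t+\frac{|Z_{t}^{(1,j)}|-|Z_{t}^{(1,\rho)}|}{\gamma t}\log\log t.
\]

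Next I would use that $Z_{t}^{(1,\rho)}$ is by definition a maximiser of $\tilde{\Psi}^{(\rho)}_{t}$, so $\tilde{\Psi}^{(\rho)}_{t}(Z_{t}^{(1,\rho)})\geq\tilde{\Psi}^{(\rho)}_{t}(Z_{t}^{(1,j)})$, which expands to
\[
\tilde{\lambda}^{(\rho)}_{t}(Z_{t}^{(1,j)})-\tilde{\lambda}^{(\rho)}_{t}(Z_{t}^{(1,\rho)})\leq \frac{|Z_{t}^{(1,j)}|-|Z_{t}^{(1,\rho)}|}{\gamma t}\log\log t.
\]
The whole point of this step is that the distance penalties in the two displayed inequalities are identical, so subtracting the second from the first makes them cancel, yielding
\[
\bigl(\tilde{\lambda}^{(j)}_{t}(Z_{t}^{(1,j)})-\tilde{\lambda}^{(\rho)}_{t}(Z_{t}^{(1,j)})\bigr)-\bigl(\tilde{\lambda}^{(j)}_{t}(Z_{t}^{(1,\rho)})-\tilde{\lambda}^{(\rho)}_{t}(Z_{t}^{(1,\rho)})\bigr)>d_t e_t,
\]
which directly contradicts the third conclusion of \Cref{lem:jtorho}. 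This contradiction forces $Z_{t}^{(1,j)}=Z_{t}^{(1,\rho)}$.

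I do not expect any real obstacle: all the heavy lifting has already been packaged into the event $\mathcal{E}_{t,c}$ (in particular the spectral gap $\mathcal{G}^{(j)}_{t,0}$) and into \Cref{lem:jtorho} (which quantifies how much the $\rho$-local and $j$-local principal eigenvalues can disagree when $\mathcal{S}^{(j)}_t$ holds at both candidate sites). The only thing to keep track of is the bookkeeping on the penalty terms, and the signs work out because the $\tilde{\Psi}$'s share the same penalty function. The case $j=\rho$ requires no argument at all.
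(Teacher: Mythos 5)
Your argument is correct and matches the paper's proof of this corollary essentially step for step: both assume $Z_t^{(1,j)}\neq Z_t^{(1,\rho)}$, use the gap event $\mathcal{G}^{(j)}_{t,0}$ to get $\tilde{\Psi}^{(j)}_{t}(Z_t^{(1,j)})-\tilde{\Psi}^{(j)}_{t}(Z_t^{(1,\rho)})>d_t e_t$, combine it with the maximality $\tilde{\Psi}^{(\rho)}_{t}(Z_t^{(1,\rho)})\geq\tilde{\Psi}^{(\rho)}_{t}(Z_t^{(1,j)})$ so that the identical distance penalties cancel, and then contradict the third bound in \Cref{lem:jtorho}. The only cosmetic difference is that you carry out the cancellation explicitly at the level of $\tilde{\lambda}$, whereas the paper keeps the computation in terms of $\tilde{\Psi}$ and notes that the $\tilde{\Psi}$-difference equals the $\tilde{\lambda}$-difference.
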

\begin{proof}
Assume that $Z^{(1, j)}_{t} \neq Z_{t}^{(1, \rho)}$. On the event
$\mathcal{E}_{t,c}$, \Cref{lem:jtorho} implies that
$$\tilde{\Psi}_t^{(j)}(Z^{(1, j)}_{t}) \geq \tilde{\Psi}_t^{(\rho)}(Z^{(1, j)}_{t}) \quad \text{and} \quad \tilde{\Psi}_t^{(j)}(Z^{(1, \rho)}_{t}) \geq \tilde{\Psi}_t^{(\rho)}(Z^{(1, \rho)}_{t}) $$
and 
\begin{align*}
& \left( \tilde{\Psi}^{(j)}_{t}(Z_{t}^{(1, j)}) - \tilde{\Psi}^{(\rho)}_{t}(Z_{t}^{(1, j)}) \right) - \left( \tilde{\Psi}^{(j)}_{t}(Z_{t}^{(1, \rho)}) - \tilde{\Psi}^{(\rho)}_{t}(Z_{t}^{(1, \rho)}) \right) \\
&  \quad = \left( \tilde{\lambda}^{(j)}_{t}(Z_{t}^{(1, j)}) - \tilde{\lambda}^{(\rho)}_{t}(Z_{t}^{(1, j)}) \right) - \left( \tilde{\lambda}^{(j)}_{t}(Z_{t}^{(1, \rho)}) - \tilde{\lambda}^{(\rho)}_{t}(Z_{t}^{(1, \rho)}) \right) < d_t e_t  
\end{align*}
all hold eventually. On the other hand, on the event $\mathcal{E}_{t,c}$ and by the definition of $Z^{(1, \rho)}_{t}$,
$$ \tilde{\Psi}_t^{(j)}(Z^{(1, j)}_{t}) - \tilde{\Psi}_t^{(j)}(Z^{(1, \rho)}_{t}) > d_t e_t  \quad \text{and} \quad \tilde{\Psi}_t^{(\rho)}(Z^{(1, \rho)}_{t}) \geq \tilde{\Psi}_t^{(\rho)}(Z^{(1, j)}_{t})  $$
also hold, giving a contradiction.
\end{proof}

\subsection{Proof of \Cref{thm:main2}}
Fix a constant $c$ as will be defined in \Cref{corry:upperbound}. We prove \Cref{thm:main2} on the event $\mathcal{E}_{t,c}$, since by \Cref{corry:eventj} this event holds with overwhelming probability eventually. Parts~(\ref{thm:main2a}) and~(\ref{thm:main2b}) of \Cref{thm:main2} are implied directly by \Cref{prop:limit} and the definition of the event $\mathcal{E}_{t,c}$. Part~(\ref{thm:main2c}) is proved in an identical manner to the corresponding result in \cite[Section 6]{Sidorova12}. As in \cite[Lemmas 6.2, 6.3]{Sidorova12}, we have that
\begin{align*}
\lim_{t \to \infty}  \PP \left( \left\{ Z_{t + \omega t}^{(1, \rho)} = Z_{t}^{(1, \rho)} \right\} \right)
&= \lim_{n \to \infty} \lim_{t \to \infty} \PP \left( \mathcal{A}(n, \omega, t)
\right)  \\
&= \int_{\mathbb{R}^d \times \mathbb{R}} \exp \left\{  -\nu ( D_\omega(x,y))  \right\} \nu(dx, dy)  < \infty
\end{align*}
where $\mathcal{A}(n, \omega, t)$ is the event
\[
  \mathcal{A}(n, \omega, t) := \Big\{ Y_{t, 0, Z_t^{(1,
  \rho)}}^{(\rho)} \geq -n \Big\}
\bigcap_{z : Y_{t, 0, z}^{(\rho)} \geq -n}
\Big\{
\tilde{\Psi}_{t + \omega t}^{(1, \rho)}(z) \leq
\tilde{\Psi}_{t + \omega t}^{(1, \rho)}(Z_{t}^{(1, \rho)})\Big\}
\]
and $D_{\omega}(x, y)$ is the set
$$D_{\omega}(x, y) := \left\{(\bar{x}, \bar{y}) \in \mathbb{R}^d \times \mathbb{R} : y + \frac{\omega |x|}{1 + \omega} \leq \bar{y} + \frac{\omega |\bar{x}|}{1 + \omega}  \right\} \cup \left(\mathbb{R}^d \times [y, \infty) \right) \,.$$
The random variable $\Theta$ can then be defined by 
\begin{align*}
\qquad \PP(\Theta > \omega) = \lim_{t \to \infty} \PP \left( T^{(\rho)}_t / t \leq \omega \right) = 1 - \lim_{t \to \infty} \PP \left( \left\{ Z_t^{(1, \rho)} = Z_{t + \omega t}^{(1, \rho)} \right\} \right) \,.  \qquad \qed
\end{align*}

\section{Spectral Theory}
\label{sec:spec}
In this section we establish results from spectral theory which we will apply in
\Cref{sec:completion}. The section draws
heavily on~\cite{Astrauskas07} and~\cite{Astrauskas08}. 

\subsection{Notation}
Fix $\varepsilon$, $\varepsilon''$, $\varepsilon'$ and $\theta'$ such that $0 <
\varepsilon'' < \varepsilon < \varepsilon' < \theta < \theta' < 1/2$. 
Along with the usual Hamiltonian 
$\mathcal{H} := \Delta_{V_{t}} + \xi$, define the
$L_{t}$-punctured Hamiltonian
$\tilde{\mathcal{H}} := \Delta_{V_{t}}  +
\tilde{\xi}$, and further, for any $z \in \Pi^{(L_{t})}$, define the `single punctured' and the `single peak' Hamiltonians
$$\mathcal{H}^{(z)} := \mathcal{H} - \xi  \id_{\{z\}} \qquad \text{and} \qquad
\tilde{\mathcal{H}}^{(z)}:= \tilde{\mathcal{H}} +
\xi \id_{\{z\}} \,. $$
Let $\mathcal{G}(\lambda; x, y)$, $\tilde{\mathcal{G}}(\lambda; x, y)$,
$\mathcal{G}^{(z)}(\lambda; x, y)$ and $\tilde{\mathcal{G}}^{(z)}(\lambda; x,
y)$ denote the Green's functions of $\mathcal{H}$, $\tilde{\mathcal{H}}$,
$\mathcal{H}^{(z)}$ and $\tilde{\mathcal{H}}^{(z)}$
respectively.\footnote{We use the following convention for the Green's functions:
  $(\lambda - \mathcal{H}) \mathcal{G}(\lambda; \cdot, y) = \id_{\{y\}}(\cdot)$.}
Let $\tilde{\lambda}_{t}(z)$ denote the principal eigenvalue of
$\tilde{\mathcal{H}}^{(z)}$ and let $\tilde{\lambda}_{t, i}$ denote the $i$'th
largest such $\tilde{\lambda}_{t}(z)$ among $z \in V_t$. Recall also the
definitions of $\lambda_{t, i}$, $\varphi_{t, i}$ and $z_{t, i}$ from
\Cref{sec:outline}.

Moreover, for any $\lambda > L_{t} + 2d$ and $u \in V_t$ define
\begin{align*}
 A(\lambda) &:= \log \frac{\lambda - L_{t}}{2d}
\end{align*}
and
\begin{align*}
  B(\lambda, u) := b(\lambda) \lambda^{-2} \left| \frac{1}{\xi(u)} -
  \tilde{\mathcal{G}}(\lambda; u, u)\right|^{-1}, \quad \text{where} \;
  b(\lambda):= \frac{{(\lambda-L_{t})}^2}{\lambda - L_{t} - 2d} \,.
\end{align*}
Note that $B(\lambda, u) = \infty$ for some $u \in V_t$ and $\lambda$. Finally, introduce the scaling function
\begin{align*}
\delta_t :=  \frac{|V_t|^{\frac{1-2\theta'}{d}}}{\log (1 + (L_{t, \varepsilon'} -
L_{t}) / 2d)
r\left(\Pi^{(L_{t})}\right)} \,.
\end{align*}
\begin{remark}
By \Cref{lem:assep}, almost surely $\delta_t = o(h_t)$.
\end{remark}

\subsection{Ancillary results on eigenvalues}
\begin{proposition}[Correspondence between local and global eigenvalues]
\label{prop:lambda}
The following hold eventually almost surely:
\begin{enumerate}[(a)]
\item
\label{prop:lambdaa}
For all $1 \leq i \leq |V_t|^\varepsilon$,
$$ \tilde{\lambda}_{t, i} = \tilde{\lambda}_{t}(z_{t, i}) $$
\item
\label{prop:lambdab}
$$ \max_{1 \leq i \leq |V_t|^\varepsilon } |\lambda_{t, i} -
\tilde{\lambda}_{t, i}| < \exp \left\{ - |V_t|^{\frac{1 - 2 \theta'}{d}} \right\}  $$
\end{enumerate}
\end{proposition}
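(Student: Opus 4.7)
The plan is to view $\mathcal{H} = \tilde{\mathcal{H}} + \sum_{z \in \Pi^{(L_{t})}} \xi(z) \id_{\{z\}}$ as a finite-rank perturbation of $\tilde{\mathcal{H}}$ (whose spectrum lies in $[-2d, L_{t}+2d]$), with each peak $z \in \Pi^{(L_{t})}$ producing exactly one single-peak eigenvalue $\tilde{\lambda}_{t}(z)$ above the bulk. I aim to show that the top $|V_t|^\varepsilon$ eigendata of $\mathcal{H}$ are in near-bijection with the family $\{(\tilde{\lambda}_{t}(z),\tilde\psi^{(z)})\}_{z \in \Pi^{(L_{t})}}$, where $\tilde\psi^{(z)}$ denotes the principal eigenfunction of $\tilde{\mathcal{H}}^{(z)}$. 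The two key inputs are the exponential localisation of each $\tilde\psi^{(z)}$ about $z$ and the large pairwise separation of peaks supplied by \Cref{lem:assep}.

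First I would establish exponential decay of $\tilde\psi^{(z)}$ away from $z$, using the rank-one formula
\[ \tilde\psi^{(z)}(x) = \xi(z)\,\tilde\psi^{(z)}(z)\,\tilde{\mathcal{G}}(\tilde{\lambda}_{t}(z);\,x,\,z), \qquad x \neq z, \]
which reduces matters to a bound on the Green's function of $\tilde{\mathcal{H}}$. Expanding $\tilde{\mathcal{G}}(\lambda;\,x,\,z)$ as a Neumann series over nearest-neighbour paths from $x$ to $z$, and noting that every internal vertex contributes a factor at most $1/(\lambda - L_{t})$ (since $\tilde\xi \leq L_{t}$ identically), gives $|\tilde{\mathcal{G}}(\tilde{\lambda}_{t}(z);x,z)| \lesssim \exp\{-A(\tilde{\lambda}_{t}(z))\,|x-z|\}/(\tilde{\lambda}_{t}(z)-L_{t})$, with $A$ as defined in \Cref{sec:spec}. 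Combined with \Cref{lem:assep}, which gives $r(\Pi^{(L_{t})}) > |V_t|^{(1-2\varepsilon)/d}$, the family $\{\tilde\psi^{(z)}\}_{z \in \Pi^{(L_{t})}}$ is then approximately orthonormal with pairwise $\ell^2$-overlaps bounded by $\exp\{-|V_t|^{(1-2\theta')/d}\}$ (using $\varepsilon < \theta'$ and $A(\cdot) \to \infty$). For part (b) I would then apply the min--max principle in both directions: using the $\tilde\psi^{(z)}$ as trial functions for $\mathcal{H}$ and observing that
\[ \mathcal{H}\tilde\psi^{(z)} - \tilde{\lambda}_{t}(z)\tilde\psi^{(z)} = \sum_{y \in \Pi^{(L_{t})} \setminus \{z\}} \xi(y)\,\tilde\psi^{(z)}(y)\,\id_{\{y\}}, \]
whose norm is exponentially small by the decay bound and the separation, gives the lower bound $\lambda_{t,i} \geq \tilde\lambda_{t,i} - \exp\{-|V_t|^{(1-2\theta')/d}\}$; projecting the top eigenfunctions $\varphi_{t,i}$ of $\mathcal{H}$ onto $\mathrm{span}\{\tilde\psi^{(z)}\}$, which is nearly invariant because the remaining spectrum of $\tilde{\mathcal{H}}$ lies well below these eigenvalues, yields the reverse bound by a Feshbach-type block decomposition.

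For part (a), once part (b) is known, each $\varphi_{t,i}$ with $i \leq |V_t|^\varepsilon$ is exponentially close in $\ell^2$ to some $\tilde\psi^{(z)}$; since $\tilde\psi^{(z)}$ attains its maximum at $z$ with tails exponentially smaller, this forces $z_{t,i} = z$, and the order-statistics ranking of $\{\tilde\lambda_{t}(z)\}$ matches that of $\{\lambda_{t,i}\}$, yielding $\tilde\lambda_{t,i} = \tilde\lambda_{t}(z_{t,i})$. The main obstacle is handling near-degeneracies: top eigenvalues are typically only $O(d_t)$ apart (as shown via the point-process analysis of \Cref{sec:pp}), so naive perturbation arguments risk allowing several $\tilde\psi^{(z)}$ to collapse into a single eigenspace of $\mathcal{H}$. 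This is circumvented by the rank-one viewpoint, which accounts for exactly one eigenvalue above $L_t + 2d$ per peak; combined with the approximate-orthogonality bound $\exp\{-|V_t|^{(1-2\theta')/d}\}$, which is vastly smaller than $d_t$, the correspondence is robust provided one carefully tracks how each top $\varphi_{t,i}$ uniquely selects one peak in $\Pi^{(L_t)}$.
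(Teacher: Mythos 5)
The paper does not prove this proposition itself: its proof is a citation to the third and first statements of \cite[Theorem 4.1]{Astrauskas08}. Your reconstruction follows the programme underlying that reference --- rank-one perturbation of the $L_t$-punctured Hamiltonian, Neumann-series bounds on $\tilde{\mathcal{G}}$, near-orthogonality of $\{\tilde\psi^{(z)}\}$ via the separation supplied by \Cref{lem:assep}, and a Feshbach-type reduction --- so the method is the right one, and the rank-one identity $\tilde\psi^{(z)} = \xi(z)\tilde\psi^{(z)}(z)\tilde{\mathcal{G}}(\tilde\lambda_t(z);\cdot,z)$ is correct. Two caveats on part~(\ref{prop:lambdab}): the claim that every peak in $\Pi^{(L_t)}$ contributes an eigenvalue of $\tilde{\mathcal{H}}^{(z)}$ strictly above the bulk needs restricting to peaks with $\tilde\lambda_t(z) > L_{t,\varepsilon'}$ (which covers $i \le |V_t|^{\varepsilon}$ by \Cref{lem:asforxi} and \Cref{lem:minmax}, but not all of $\Pi^{(L_t)}$); and the reverse min--max inequality is named rather than proved --- one must actually estimate the off-diagonal blocks of $\mathcal{H}$ against $\mathrm{span}\{\tilde\psi^{(z)}\}$ and its complement.

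The genuine gap is in part~(\ref{prop:lambdaa}). The statement $\tilde\lambda_{t,i} = \tilde\lambda_t(z_{t,i})$ requires that the correspondence between the top global eigenvalues of $\mathcal{H}$ and the peaks of $\Pi^{(L_t)}$ be \emph{order-preserving}, not merely a bijection. You flag near-degeneracies as the obstacle but then assert they are ``circumvented by the rank-one viewpoint'': the rank-one count gives you one global eigenvalue per high peak, but it says nothing about their relative order after perturbation. If $\tilde\lambda_t(z_1) > \tilde\lambda_t(z_2)$ differ by less than the perturbation error $\exp\{-|V_t|^{(1-2\theta')/d}\}$, the associated global eigenvalues can swap rank, and then $\tilde\lambda_{t,i} = \tilde\lambda_t(z_1) \neq \tilde\lambda_t(z_2) = \tilde\lambda_t(z_{t,i})$. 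Your comparison of the error to the \emph{typical} spacing $d_t$ is not to the point; what is needed is an eventually-almost-sure lower bound on the \emph{minimum} spacing among the top $|V_t|^{\varepsilon}$ values of $\tilde\lambda_t(\cdot)$ that beats $\exp\{-|V_t|^{(1-2\theta')/d}\}$. This is a separate probabilistic anti-concentration ingredient (bounded Weibull density, near-independence of $\tilde\lambda_t(z)$ over $\Pi^{(L_t)}$ from \Cref{lem:assep}, union bound and Borel--Cantelli) that the deterministic perturbation theory cannot supply and that your sketch omits.
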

\begin{proof}
These are proved in \cite{Astrauskas08}, as a consequence of the third and first statements of Theorem 4.1 respectively (keeping in mind the definition in (4.3) of that paper).
\end{proof}
\begin{remark}
The correspondence in \Cref{prop:lambda} indicates that the $i$'th highest
eigenvalue of $\mathcal{H}$ is closely approximated by the principal eigenvalue
of the `single peak' Hamiltonian $\tilde{\mathcal{H}}^{(z_{t,i})}$. In physical
terms, this can be interpreted as a lack of `resonance' between the regions in
$V_t$ where the potential field $\xi$ is high, i.e.\ the regions which give rise to a high local principal eigenvalue.
\end{remark}
\begin{lemma}[Almost sure asymptotics for eigenvalues]
\label{lem:auxasympt}
The following hold eventually almost surely:
\begin{enumerate}[(a)]
\item
\label{lem:auxasympt0}
$\lambda_{t, i} < L_{t, -\varepsilon'} \qquad \qquad  
\text{for all } 1 \leq i  \leq |V_t|^{\varepsilon} $ 
\item
\label{lem:auxasympt1}
$\lambda_{t, i} > L_{t, \varepsilon'}  \qquad \qquad \ \,
\text{for all } 1 \leq i \leq  |V_t|^{\varepsilon} $ 
\item
\label{lem:auxasympt2}
$\lambda_{t, i} < L_{t, \varepsilon''} \qquad \qquad \
\text{for all } |V_t|^{\varepsilon} \leq i \leq |V_t|$
\item
\label{lem:auxasympt3}
 $\tilde{\lambda}(z_{t, i}) < L_{t, \varepsilon''} \quad  \qquad \text{for all\ }
|V_t|^{\varepsilon} \leq i \leq |V_t|$
\end{enumerate}
\end{lemma}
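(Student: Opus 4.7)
The lemma's four parts concern upper and lower bounds on the top eigenvalues $\lambda_{t,i}$ of $\mathcal{H}$ and on the local eigenvalues $\tilde{\lambda}_t(z_{t,i})$; the plan is to attack parts~(\ref{lem:auxasympt0})--(\ref{lem:auxasympt2}) by standard min-max and Weyl arguments combined with \Cref{lem:asforxi,lem:assep}, and then bootstrap part~(\ref{lem:auxasympt3}) from part~(\ref{lem:auxasympt2}) via the pointwise eigenvalue equation at $z_{t,i}$. The crucial recurring observation is that each difference $L_{t,a}-L_{t,a'}$ (for $a<a'$) diverges like a positive multiple of $(\log|V_t|)^{1/\gamma}$, so any fixed additive constant such as $2d$ or $6d$ is eventually absorbed; together with $\|\Delta_{V_t}\|_{\mathrm{op}} = 2d$, this drives all four arguments.

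Part~(\ref{lem:auxasympt0}) follows immediately from \Cref{lem:minmax,lem:asforxi}: $\lambda_{t,i} \le \lambda_{t,1} \le \xi_{t,1} + 2d \sim L_{t,0}$ almost surely, which lies below $L_{t,-\varepsilon'}$ eventually. Part~(\ref{lem:auxasympt1}) follows from the min-max theorem applied to the trial subspace spanned by $\{\delta_{z_k}\}_{k \le i}$, where $z_1,\ldots,z_i$ are the sites realising the top $i$ values of $\xi$; since $\varepsilon<\theta$, these sites lie in $\Pi^{(L_t)}$ eventually, and \Cref{lem:assep} makes them pairwise separated by more than one, which kills the off-diagonal contribution of $\Delta_{V_t}$ on this subspace. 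Hence $\langle \mathcal{H} f, f\rangle = \sum_k \xi(z_k) |c_k|^2 \ge \xi_{t,i}$ for normalised $f = \sum_k c_k \delta_{z_k}$, giving $\lambda_{t,i} \ge \xi_{t,i} \sim L_{t,\varepsilon}$, which exceeds $L_{t,\varepsilon'}$. Part~(\ref{lem:auxasympt2}) is immediate from Weyl's inequality applied to $\mathcal{H} = \xi + \Delta_{V_t}$, yielding $\lambda_{t,i} \le \xi_{t,i} + 2d$, combined with $\xi_{t,i} \le L_{t,\varepsilon}(1+o(1))$ for $i \ge |V_t|^\varepsilon$.

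For part~(\ref{lem:auxasympt3}), I would evaluate the eigenvalue equation $\mathcal{H}\varphi_{t,i} = \lambda_{t,i}\varphi_{t,i}$ at the maximiser $z_{t,i}$, divide by $\varphi_{t,i}(z_{t,i})$ (strictly positive after a sign choice), and use $|\varphi_{t,i}(y)| \le |\varphi_{t,i}(z_{t,i})|$ at each of the $2d$ neighbours $y$ to obtain
\[
\xi(z_{t,i}) = \lambda_{t,i} - \frac{(\Delta_{V_t}\varphi_{t,i})(z_{t,i})}{\varphi_{t,i}(z_{t,i})}, \qquad \left|\frac{(\Delta_{V_t}\varphi_{t,i})(z_{t,i})}{\varphi_{t,i}(z_{t,i})}\right| \le 2d.
\]
Combined with part~(\ref{lem:auxasympt2}) this gives $\xi(z_{t,i}) \le L_{t,\varepsilon}(1+o(1)) + 4d$. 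To finish, I apply Weyl's inequality once more to the single-peak operator $\tilde{\mathcal{H}}^{(z_{t,i})} = \Delta_{V_t} + \tilde{\xi} + \xi(z_{t,i})\id_{\{z_{t,i}\}}$, whose diagonal is bounded by $\max\{\xi(z_{t,i}), L_t\}$, obtaining $\tilde{\lambda}_t(z_{t,i}) \le \max\{\xi(z_{t,i}), L_t\} + 2d$, which lies below $L_{t,\varepsilon''}$ since the gap $L_{t,\varepsilon''} - L_{t,\varepsilon}$ diverges.

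The main obstacle, such as it is, is the mild bookkeeping in part~(\ref{lem:auxasympt3}) around the case $z_{t,i} \notin \Pi^{(L_t)}$, since the single-peak operator $\tilde{\mathcal{H}}^{(z_{t,i})}$ is defined in the text only for $z \in \Pi^{(L_t)}$; but in that regime $\xi(z_{t,i}) \le L_t$ and any natural reinterpretation of $\tilde{\lambda}_t(z_{t,i})$ reduces to a principal eigenvalue of $\tilde{\mathcal{H}}$, bounded above by $L_t + 2d$, which again lies comfortably below $L_{t,\varepsilon''}$.
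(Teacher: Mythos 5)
Your proof is correct, but it follows a genuinely different route from the paper's for parts~(\ref{lem:auxasympt1})--(\ref{lem:auxasympt3}). The paper's proof is very short because it leans on \Cref{prop:lambda} (the Astrauskas correspondence $\tilde{\lambda}_{t,i} = \tilde{\lambda}_t(z_{t,i})$ and $|\lambda_{t,i} - \tilde{\lambda}_{t,i}| < \exp\{-|V_t|^{(1-2\theta')/d}\}$ for $1 \le i \le |V_t|^\varepsilon$), reducing all three parts to the single two-sided estimate $L_{t,\varepsilon'}+1 < \tilde{\lambda}_{t,\floor{|V_t|^\varepsilon}} < L_{t,\varepsilon''}-1$, which it then extracts from the min-max bound $\xi(z) \le \tilde{\lambda}_t(z) \le \max\{L_t,\xi(z)\}+2d$ together with \Cref{lem:asforxi}. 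You instead prove (\ref{lem:auxasympt1}) and (\ref{lem:auxasympt2}) directly --- the separated-trial-subspace argument for the lower bound, and Weyl's inequality $\lambda_{t,i} \le \xi_{t,i} + 2d$ for the upper --- and you prove (\ref{lem:auxasympt3}) by reading off the eigenvalue equation at the peak of $\varphi_{t,i}$ and then a final min-max bound for the single-peak operator. This is more elementary and self-contained: it does not need the (considerably heavier) correspondence theorem, and it makes the mechanism behind each estimate more visible. What the paper's route buys in exchange is brevity and the fact that Proposition 5.1 is going to be invoked elsewhere in \Cref{sec:completion} anyway, so no new ingredient is introduced.

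Two small points deserve care. First, your bound $|(\Delta_{V_t}\varphi_{t,i})(z_{t,i})/\varphi_{t,i}(z_{t,i})| \le 2d$ requires $|\varphi_{t,i}(y)| \le |\varphi_{t,i}(z_{t,i})|$ at the neighbours $y$ of $z_{t,i}$, which holds only if $z_{t,i}$ maximises $|\varphi_{t,i}|$, not merely $\varphi_{t,i}$. The paper's definition $z_{t,i} := \argmax_z \varphi_{t,i}(z)$ should therefore be read as the maximiser of the absolute value with the overall sign of $\varphi_{t,i}$ chosen so that this value is positive; with that reading your argument is sound, but you should state the convention explicitly rather than leaving it at ``after a sign choice''. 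Second, your trial-subspace argument for part~(\ref{lem:auxasympt1}) needs the top $\floor{|V_t|^\varepsilon}$ sites of $\xi$ to lie in $\Pi^{(L_t)}$ (so that \Cref{lem:assep} applies to separate them); this follows from \Cref{lem:asforxi} and $\varepsilon < \theta$, but it is worth making the inference explicit since the separation statement in \Cref{lem:assep} is about $\Pi^{(L_t)}$, not about the set of top $\xi$-values per se.
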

\begin{proof}
For part~\eqref{lem:auxasympt0}, it is sufficient to show that eventually $\lambda_{t, 1} < L_{t, -\varepsilon'}$, which follows by combining \Cref{lem:minmax} with the asymptotics in \Cref{lem:asforxi}. For parts~\eqref{lem:auxasympt1}-- 
\eqref{lem:auxasympt3}, by the correspondence in \Cref{prop:lambda} it is sufficient to show that eventually
$$ L_{t, \varepsilon'} + 1 < \tilde{\lambda}_{t, \floor{|V_t|^\varepsilon}} < L_{t, \varepsilon''} - 1 \, .$$
As in \Cref{lem:minmax}, by the min-max theorem, for each $z \in V_t$,
$$  \xi(z) \le \tilde{\lambda}_t(z) \le \max\{L_t, \xi(z)\} + 2d \, .$$
The result then follows from the asymptotics in \Cref{lem:asforxi}
\end{proof}

\subsection{Exponential decay of eigenfunctions and upper bound on
eigenfunctions at zero}
In this subsection, we prove that the eigenfunctions $\varphi_{t, i}$
corresponding to the largest eigenvalues of $\mathcal{H}$ eventually localise
with exponential decay away from the localisation site $z_{t, i}$. As a
corollary, we bound the value of these eigenfunctions at the origin. Note that
these results mimic~\cite[Theorem 4.1]{Astrauskas08}, but with tighter control
over the rate of the exponential decay.

\begin{proposition}[See {\cite[Theorem 4.1]{Astrauskas08}}]
\label{prop:expdecay}
Eventually, for all $1 \le i \le |V_{t}|^{\varepsilon}$,
 \begin{align*}
|\varphi_{t, i}(z)| \le 4\left(1 +
\frac{2d}{L_{t, \varepsilon'} - L_{t}}\right) \exp \left( - (1-\delta_t)
\log\left(\frac{\lambda_{t, i} - L_{t}}{2d}\right) |z-z_{t, i}| \right) 
\end{align*}
almost surely.
\end{proposition}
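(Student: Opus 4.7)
The plan is to derive the bound from the eigenvalue equation by iterating a one-step resolvent estimate along paths avoiding the set of high points $\Pi^{(L_t)}$. The starting point is that for $1 \le i \le |V_t|^{\varepsilon}$, the eigenfunction $\varphi_{t,i}$ satisfies $(\mathcal{H} - \lambda_{t,i})\varphi_{t,i} = 0$, which, rearranged at any site $z \notin \Pi^{(L_t)}$ (where $\xi(z) \le L_t$), reads
\begin{align*}
\varphi_{t,i}(z) = \frac{1}{\lambda_{t,i} - \xi(z)} \sum_{y \sim z} \varphi_{t,i}(y),
\end{align*}
and by \Cref{lem:auxasympt}\eqref{lem:auxasympt1}, eventually $\lambda_{t,i} - L_t > L_{t,\varepsilon'} - L_t \to \infty$, so this factor is strictly positive and small. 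I would then rephrase this as a Green's function identity: writing $\varphi_{t,i}(z) = \sum_{u \in \Pi^{(L_t)}} \tilde{\mathcal{G}}(\lambda_{t,i}; z, u)\, \xi(u)\, \varphi_{t,i}(u)$, which expresses the eigenfunction in terms of the punctured Green's function with sources confined to the high-point set.

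The second step is to obtain pointwise decay for $\tilde{\mathcal{G}}(\lambda; z, u)$ via its path expansion. Each term corresponds to a nearest-neighbour path from $z$ to $u$ in $V_t \setminus \Pi^{(L_t)}$, contributing a factor $\prod (\lambda_{t,i} - \xi(\cdot))^{-1}$ along the path; summing over paths and using a standard generating-function / combinatorial count of nearest-neighbour walks gives an upper bound of the form $C \cdot \exp(-A(\lambda_{t,i})|z-u|)$, where $A(\lambda) = \log((\lambda - L_t)/2d)$ is precisely the logarithmic rate in the conclusion. Combined with \Cref{prop:lambda}\eqref{prop:lambdaa} (so that only $u = z_{t,i}$ effectively contributes, the other high points giving eigenfunctions that are exponentially suppressed at $z_{t,i}$), and with $|\varphi_{t,i}(z_{t,i})| \le 1$ from the $\ell^2$-normalisation, one obtains the desired inequality up to the $(1-\delta_t)$ correction in the rate.

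The main obstacle, and the reason the rate is damped by $(1 - \delta_t)$ rather than being a clean $A(\lambda_{t,i})$, is that nearest-neighbour paths from $z$ to $z_{t,i}$ in $V_t \setminus \Pi^{(L_t)}$ may have to detour around other high points in $\Pi^{(L_t)} \setminus \{z_{t,i}\}$. Each such detour costs a bounded factor at the detour site and adds $O(1)$ to the effective path length; by \Cref{lem:assep}, these detours occur on scale $r(\Pi^{(L_t)}) \ge |V_t|^{(1-2\theta')/d}$, so the cumulative multiplicative loss along a path of length $|z - z_{t,i}|$ is at most $\exp(\delta_t A(\lambda_{t,i})|z-z_{t,i}|)$, with $\delta_t$ chosen exactly as in the definition in \Cref{sec:spec} so that the loss is absorbed into the $(1-\delta_t)$ factor. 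Tracking the constants so that the prefactor comes out to exactly $4(1 + 2d/(L_{t,\varepsilon'} - L_t))$, rather than something that grows with the number of detours, will require using that $\lambda_{t,i} - L_t \ge L_{t,\varepsilon'} - L_t$ (from \Cref{lem:auxasympt}) to bound the one-step detour factor uniformly, and then invoking the summability of the resulting geometric series.

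Finally, I would use \Cref{prop:lambda}\eqref{prop:lambdab} to justify replacing $\tilde{\lambda}_{t,i}$ by $\lambda_{t,i}$ in the decay rate: the two differ by at most $\exp(-|V_t|^{(1-2\theta')/d})$, which is of smaller order than the perturbation already absorbed into $\delta_t$. The result is essentially that of~\cite[Theorem 4.1]{Astrauskas08}, with the improvement being a sharper bookkeeping of the exponential rate to produce the explicit factor $(1-\delta_t)$ in front of $A(\lambda_{t,i})$ rather than a generic $1 - o(1)$.
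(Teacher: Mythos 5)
The paper's proof of \Cref{prop:expdecay} is a direct citation: it applies \cite[Theorem~B.3]{Astrauskas08} with a specific dictionary of substitutions (identifying $L$ with $L_t$, $h$ with $L_{t,\varepsilon'}-L_t$, $\delta$ with $\delta_t$, $\mu$ with $d(1+\theta)/(1-2\theta')$, $K$ with $|V_t|^\varepsilon$) and observes that the hypotheses of that theorem have already been verified in the proof of \cite[Theorem~4.1]{Astrauskas08} with the same $\delta$. You instead sketch a from-scratch reconstruction of the underlying argument, which is a legitimate ambition but has a genuine gap.

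The gap is at the point where you pass from the identity
$\varphi_{t,i}(z) = \sum_{u \in \Pi^{(L_t)}} \tilde{\mathcal{G}}(\lambda_{t,i}; z, u)\,\xi(u)\,\varphi_{t,i}(u)$
to a bound involving only $u = z_{t,i}$. You justify neglecting the other high points by appealing to the claim that ``the other high points giv[e] eigenfunctions that are exponentially suppressed at $z_{t,i}$'' --- but that is exactly the eigenfunction decay you are in the middle of proving, so the argument is circular. \Cref{prop:lambda}\eqref{prop:lambdaa} identifies the eigenvalue $\lambda_{t,i}$ with a local principal eigenvalue at $z_{t,i}$; it does not, by itself, tell you that $|\varphi_{t,i}(u)|$ is small for $u \in \Pi^{(L_t)}\setminus\{z_{t,i}\}$. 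The machinery that \cite{Astrauskas08} actually uses to break this circularity is the cluster expansion (the analogue of \Cref{lem:clusterExpansion}, together with the resolvent-type estimates culminating in Theorems~B.1 and~B.3): one expresses $\varphi_{t,i}$ via $\mathcal{G}^{(z_{t,i})}$ and expands that Green's function over ordered sequences of points in $\Pi^{(L_t)}\setminus\{z_{t,i}\}$, with each jump contributing a controlled factor. Without this (or something equivalent), the bootstrap you want does not close.

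Relatedly, your heuristic for the $(1-\delta_t)$ loss --- that paths must ``detour around'' high points --- misdescribes the mechanism. The punctured Green's function $\tilde{\mathcal{G}}$ is the resolvent of $\tilde{\mathcal{H}} = \Delta_{V_t} + \tilde{\xi}$, whose potential is \emph{zero} on $\Pi^{(L_t)}$; paths pass through those sites without penalty, so $\tilde{\mathcal{G}}$ decays at the full rate $A(\lambda_{t,i})$ with no $\delta_t$. The $(1-\delta_t)$ correction enters only once the cluster expansion is introduced: each jump between consecutive high points in the cluster costs a multiplicative factor involving $B(\lambda,u)$ and $\xi(u)$, and the separation bound $r(\Pi^{(L_t)}) \gtrsim |V_t|^{(1-2\theta')/d}$ from \Cref{lem:assep} limits the number of such jumps per unit distance, which is precisely what the definition of $\delta_t$ in \Cref{sec:spec} encodes. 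Your intuition that the separation scale is what governs the size of $\delta_t$ is right; the combinatorial object it controls is the cluster sequence, not nearest-neighbour walks. Finally, the appeal to \Cref{prop:lambda}\eqref{prop:lambdab} to replace $\tilde{\lambda}_{t,i}$ by $\lambda_{t,i}$ is unnecessary in this argument, since the Green's function identity is evaluated at $\lambda_{t,i}$ from the start; what actually needs checking is that $\lambda_{t,i} \notin \sigma(\tilde{\mathcal{H}})$ and $\lambda_{t,i} \notin \sigma(\mathcal{H}^{(z_{t,i})})$, which is the content of \cite[Theorem~B.1]{Astrauskas08}.
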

\begin{proof}
\Cref{prop:expdecay} is an application of~\cite[Theorem B.3]{Astrauskas08} with the following notation:
\begin{align*}
L \leftarrow L_{t} \, , \quad  \Pi \leftarrow \Pi^{(L_{t})} \, ,
\quad  h \leftarrow L_{t, \varepsilon'} - L_{t} \, 
\\ \quad  \delta
\leftarrow \delta_t,  \quad \mu \leftarrow \frac{d(1+\theta)}{1-2\theta'} \quad
\text{and} \quad  K \leftarrow |V_t|^{\varepsilon} \,. 
\end{align*}
Note that for this application the assumptions
\cite[(B.25)-(B.29)]{Astrauskas08} are implicitly verified in  the proof of
\cite[Theorem 4.1]{Astrauskas08}, since $\delta_t$ agrees with the $\delta$ used
in that proof.\footnote{Note, however, that the $\delta$ used in the proof
of~\cite[Theorem 4.1]{Astrauskas08} is not explicit, but can be inferred by
jointly considering~\cite[Lemma 4.3]{Astrauskas08}
and~\cite[B.28]{Astrauskas08}. Note also that $\delta$ depends on $\mu$.}
\end{proof}
\begin{corollary}[Exponential decay of eigenfunctions]
\label{corry:expdecay}
Eventually, for all $1 \le i \le |V_{t}|^{\varepsilon}$,
\begin{align*}
\log |\varphi_{t, i}(z)|
\le -\frac{|z - z_{t, i}|}{\gamma} 
(1 - f_t) \log \log t \quad \text{almost surely}\,.
\end{align*}
\end{corollary}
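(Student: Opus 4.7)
The plan is to start from \Cref{prop:expdecay}, take logarithms, and then quantify each factor in the resulting exponent using the almost sure bounds from \Cref{lem:auxasympt} together with the explicit form of the macrobox levels. The case $z = z_{t,i}$ is trivial: since each $\varphi_{t,i}$ is $\ell^2$-normalised on $V_t$, we have $|\varphi_{t,i}(z)| \leq 1$ for all $z$, hence $\log|\varphi_{t,i}(z_{t,i})| \leq 0$, which matches the right-hand side. Thus we may restrict attention to $z \neq z_{t,i}$, i.e. $|z - z_{t,i}| \geq 1$.

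Taking logarithms in \Cref{prop:expdecay} and using that the prefactor $4(1 + 2d/(L_{t,\varepsilon'} - L_t))$ is bounded gives, eventually,
\begin{align*}
\log|\varphi_{t,i}(z)| \leq C - (1 - \delta_t)\, \log\!\left( \frac{\lambda_{t,i} - L_t}{2d} \right)\, |z - z_{t,i}|
\end{align*}
for some absolute constant $C$. Next I would estimate the decay rate $\log((\lambda_{t,i} - L_t)/(2d))$ from below. By part~\eqref{lem:auxasympt1} of \Cref{lem:auxasympt}, for $1 \leq i \leq |V_t|^\varepsilon$ we have $\lambda_{t,i} > L_{t,\varepsilon'}$, and by definition
\[
 L_{t,\varepsilon'} - L_t \;=\; \bigl((1-\varepsilon')^{1/\gamma} - (1-\theta)^{1/\gamma}\bigr)(\log|V_t|)^{1/\gamma},
\]
a positive constant times $(\log|V_t|)^{1/\gamma}$. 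Since $\log|V_t| \sim d \log t$, this yields
\[
\log\!\left( \frac{\lambda_{t,i} - L_t}{2d}\right) \;\geq\; \frac{1}{\gamma} \log\log|V_t| + O(1) \;=\; \frac{1}{\gamma}\log\log t + O(1).
\]

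Combining these estimates, for $|z - z_{t,i}| \geq 1$,
\begin{align*}
\log|\varphi_{t,i}(z)| \;\leq\; -\frac{|z - z_{t,i}|}{\gamma}\log\log t \, \Bigl(\, 1 - \delta_t - O(1/\log\log t)\, \Bigr),
\end{align*}
where the $O(1/\log\log t)$ absorbs the bounded constant $C$ (divided by $|z - z_{t,i}|(\log\log t)/\gamma \geq (\log\log t)/\gamma$) and the $O(1)$ inside the logarithm. The final step is to bound $\delta_t + O(1/\log\log t)$ by $f_t$. Recalling the ordering $1/\log\log t \ll f_t$ from the construction of the auxiliary scales, and observing that by \Cref{lem:assep} (applied with some $\varepsilon < \theta < \theta'$) the denominator of $\delta_t$ contains the factor $r(\Pi^{(L_t)}) > |V_t|^{(1-2\varepsilon)/d}$, so that $\delta_t$ decays polynomially in $|V_t|$ and a fortiori $\delta_t = o(1/\log\log t)$, one concludes $\delta_t + O(1/\log\log t) \leq f_t$ eventually, yielding the claimed bound. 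The main obstacle is purely one of careful asymptotic bookkeeping between the explicit constants in the exponential-decay bound and the cleanly packaged $(1-f_t)\gamma^{-1}\log\log t$ rate; no additional conceptual input is needed beyond the ingredients already assembled.
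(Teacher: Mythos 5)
Your proof is correct and follows essentially the same route as the paper's: apply \Cref{prop:expdecay}, lower-bound the decay rate $\log((\lambda_{t,i}-L_t)/(2d))$ via part~\eqref{lem:auxasympt1} of \Cref{lem:auxasympt} to get $\frac{1}{\gamma}\log\log t + O(1)$, and absorb the $\delta_t$ and $O(1/\log\log t)$ corrections into $f_t$ using the scale ordering. Your explicit treatment of the degenerate case $z=z_{t,i}$ (via $\ell^2$-normalisation) and the explicit expansion of $L_{t,\varepsilon'}-L_t$ are small niceties the paper glosses over, but they do not change the substance of the argument.
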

\begin{proof}
For all $1 \le i \le |V_t|^{\varepsilon}$, the asymptotics in
part~(\ref{lem:auxasympt1}) of \Cref{lem:auxasympt} imply that
\begin{align*}
\log (\lambda_{t, i} - L_{t}) > \frac{1}{\gamma} \log \log t +  O(1) 
\end{align*}
almost surely. Applying \Cref{prop:expdecay} we get that, eventually,
\begin{align*}
\log |\varphi_{t, i}(z)|
&\le -\frac{1-\delta_t}{\gamma} |z_{t, i} - z| \log \log t \left( 1 -
\frac{\text{const.}}{\log \log t} \right)
\\ \nonumber
&\le -\frac{|z_{t, i} - z|}{\gamma} (1 - f_t) \log \log t 
\end{align*}
almost surely, since $1 / \log \log t = o(f_t)$ and $\delta_t = o(f_t)$.
\end{proof}
\begin{corollary}[Upper bound on eigenfunctions at zero]
\label{corry:upperbound}
There exists a $c>0$ such that eventually, for all $1 \le i \le |V_t|^{\varepsilon}$,
\begin{align*}
\log |\varphi_{t, i}(0)| \le 
  -\frac{|z_{t, i}|}{\gamma} \log \log t
  + c |z_{t, i}| 
\end{align*}
almost surely.
\end{corollary}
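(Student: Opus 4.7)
The plan is to apply Proposition~\ref{prop:expdecay} directly at $z = 0$, rather than passing through the cleaner but coarser Corollary~\ref{corry:expdecay}. The reason is that in Corollary~\ref{corry:expdecay} the exponent is bounded by $-\tfrac{1}{\gamma}(1-f_t)|z-z_{t,i}|\log\log t$, which at $z=0$ leaves a residual term of size $\tfrac{1}{\gamma}f_t|z_{t,i}|\log\log t$; since $f_t\log\log t\to\infty$ by the chosen scaling, this is not of the required form $c|z_{t,i}|$. Working directly with Proposition~\ref{prop:expdecay} retains the finer slack factor $\delta_t$, which decays much faster than $f_t$.

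The first step is to lower bound $\log\bigl((\lambda_{t,i}-L_t)/2d\bigr)$. By part~(\ref{lem:auxasympt1}) of Lemma~\ref{lem:auxasympt}, eventually $\lambda_{t,i}-L_t \ge L_{t,\varepsilon'}-L_t$ almost surely for all $1 \le i \le |V_t|^\varepsilon$. Using the definition $L_{t,a}=((1-a)\log|V_t|)^{1/\gamma}$ together with $\log|V_t| = d\log t + O(\log\log t)$, a Taylor expansion of the difference gives
$$ L_{t,\varepsilon'}-L_t \;=\; (\log|V_t|)^{1/\gamma}\bigl((1-\varepsilon')^{1/\gamma}-(1-\theta)^{1/\gamma}\bigr), $$
so $\log\bigl((\lambda_{t,i}-L_t)/2d\bigr) \ge \tfrac{1}{\gamma}\log\log t - C$ for some constant $C>0$, eventually almost surely.

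The second step is to control the factor $\delta_t$. By Lemma~\ref{lem:assep}, $r(\Pi^{(L_t)}) > |V_t|^{(1-2\varepsilon)/d}$ eventually almost surely, so
$$ \delta_t \;\le\; \frac{|V_t|^{-2(\theta'-\varepsilon)/d}}{\log\bigl(1+(L_{t,\varepsilon'}-L_t)/2d\bigr)} $$
decays polynomially in $1/t$ (since $\theta'>\varepsilon$). In particular $\delta_t\log\log t \to 0$, so $(1-\delta_t)\bigl[\tfrac{1}{\gamma}\log\log t - C - \log 2d\bigr] \ge \tfrac{1}{\gamma}\log\log t - C'$ for some $C'>0$, eventually almost surely.

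Combining the two steps with Proposition~\ref{prop:expdecay} at $z=0$, and noting that the prefactor $4\bigl(1+2d/(L_{t,\varepsilon'}-L_t)\bigr)$ contributes only an $O(1)$ additive term, yields
$$ \log|\varphi_{t,i}(0)| \;\le\; -\frac{|z_{t,i}|}{\gamma}\log\log t + C'|z_{t,i}| + O(1). $$
If $z_{t,i}\neq 0$ then $|z_{t,i}|\ge 1$ and the $O(1)$ constant is absorbed by enlarging $C'$ to some $c>0$; if $z_{t,i}=0$ then the conclusion reduces to $\log|\varphi_{t,i}(0)|\le 0$, which follows immediately from the $\ell^2$-normalisation of $\varphi_{t,i}$. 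There is no substantive obstacle: the argument is essentially a careful bookkeeping of the quantitative rate in Proposition~\ref{prop:expdecay}, the main point being that the slack $\delta_t$ is small enough that $\delta_t\log\log t\to 0$, while $f_t\log\log t\to\infty$ would have been too weak.
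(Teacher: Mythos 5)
Your argument is correct and follows essentially the same route as the paper's: both go directly through Proposition~\ref{prop:expdecay} evaluated at $z=0$ (not through Corollary~\ref{corry:expdecay}), lower-bound $\log((\lambda_{t,i}-L_t)/2d)$ via Lemma~\ref{lem:auxasympt}\eqref{lem:auxasympt1}, and exploit that $\delta_t\log\log t=o(1)$ to absorb the slack into $c|z_{t,i}|$. The only point where you add something the paper omits is the explicit treatment of the degenerate case $z_{t,i}=0$, where the $O(1)$ from the prefactor cannot be absorbed by enlarging $c$; your observation that $\ell^2$-normalisation gives $\log|\varphi_{t,i}(0)|\le 0$ closes that (minor) gap cleanly.
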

\begin{proof}
Again, applying \Cref{prop:expdecay} and part~(\ref{lem:auxasympt1}) of
\Cref{lem:auxasympt} we have that
\begin{align*}
\log |\varphi_{t, i}(0)|  & \le |z_{t, i}| \left(
- (1 - \delta_t)  
\log (L_{t, \varepsilon'} - L_{t}) + \log (2d) \right) + o(1)
\\ & \le - \frac{|z_{t, i}|}{\gamma} \log \log t + c'
|z_{t, i}| + O(|z_{t, i}| \delta_t \log \log t)
\end{align*}
almost surely, which yields the result since $\delta_t \log \log t = o(1)$, replacing $c'$ with some $c > c'$.
\end{proof}
\subsection{Lower bound on eigenfunctions at zero}
In this subsection, we prove a lower bound on the value of certain
eigenfunctions at zero. We only do this for very specific eigenfunctions, which
satisfy an assumption defined below. It will turn out that the eigenfunction
associated to the localisation site $Z_t^{(1, \rho)}$ satisfies this assumption.

\begin{assumption}
\label{assumpt:A}
Introduce an auxiliary set 
$$H(\lambda) = \left\{ x \in V_t | \tilde{\lambda}_{t}(x) \ge \lambda \right\} \,.$$
An index $i$ satisfies \Cref{assumpt:A} if 
$$ \min_{x \in H(\lambda_{t, i})} |x| > |z_{t, i}|(1+ h_t) \,.$$
\end{assumption}
Before embarking on the proof of a lower bound, we need to introduce some
well-known tools from spectral theory, which are proved, for instance, in
\cite{Astrauskas08}.
\begin{lemma}[{Path expansion over $\Delta_{V_t}$; see~\cite[Lemma A.2]{Astrauskas08}}]
\label{lem:pathExpansion}
Consider the Hamiltonian $\mathcal{H}_{\zeta} := \Delta_{V_t} + \zeta$
on $V_t$, where $\zeta$ is any potential field.
Denote by $\mathcal{G}_{\zeta}(\lambda; x, y)$ the Green's function associated
with $\mathcal{H}_{\zeta}$. Then for any $x, y \in V_{t}$,
\begin{align}
\label{eq:pathExpansion}
\mathcal{G}_{\zeta}(\lambda; x, y) = \sum_{\Gamma(x, y)}^{} \prod_{v \in
  V_{t}}^{}{(\lambda - \zeta(v))}^{-n_{v}(\Gamma(x, y))}
\end{align}
provided the series converges. Here the sum $\sum_{\Gamma(x, y)}^{}$ is taken
over all paths
\begin{align*}
  \Gamma : x = v_{0} \to v_1 \to \dots \to v_m := y \quad \text{in $V_{t}$}
\end{align*}
such that $|v_{i} - v_{i-1}|=1$ for each $1 \le i \le m$ and each $m \in
\mathbb{N}$ (i.e.\ the nearest neighbour paths in $V_{t}$ starting at $x$ and
ending at $y$); $n_{v}(\Gamma(x, y))$ denotes the number of times the path
$\Gamma(x, y)$ visits the site $v \in V_{t}$; $|\Gamma(x, y)| := \sum_{v \in
  V_{t}}^{} n_{v}(\Gamma(x, y)) \ge |x-y|$. Note that if $x=y$ and
 $\Gamma(x, x)=0$, then the corresponding summand in
\cref{eq:pathExpansion} is equal to ${(\lambda - \zeta(x))}^{-1}$.
\end{lemma}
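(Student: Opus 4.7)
The plan is to derive \cref{eq:pathExpansion} by Neumann expansion of the resolvent $(\lambda - \mathcal{H}_\zeta)^{-1}$. Let $D$ denote the diagonal multiplication operator on $\ell^2(V_t)$ defined by $(Df)(v) := (\lambda - \zeta(v)) f(v)$, so that $\lambda - \mathcal{H}_\zeta = D - \Delta_{V_t}$. Assuming $\lambda \neq \zeta(v)$ for every $v \in V_t$ (otherwise the claimed path sum already contains an undefined factor, so the convergence hypothesis is vacuous), $D$ is invertible and one can factor
\begin{align*}
\lambda - \mathcal{H}_\zeta = D \bigl(I - D^{-1} \Delta_{V_t}\bigr), \qquad \mathcal{G}_\zeta(\lambda) = \bigl(I - D^{-1}\Delta_{V_t}\bigr)^{-1} D^{-1}.
\end{align*}

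Next I would expand the middle factor as a Neumann series,
\begin{align*}
\bigl(I - D^{-1} \Delta_{V_t}\bigr)^{-1} = \sum_{m \geq 0} (D^{-1} \Delta_{V_t})^m,
\end{align*}
whose validity is equivalent to the stated convergence of the right-hand side of \cref{eq:pathExpansion}. Evaluating the $(x,y)$ matrix entry of $(D^{-1}\Delta_{V_t})^m D^{-1}$ by iterating the kernel formulae $(D^{-1})_{u,w} = (\lambda - \zeta(u))^{-1} \id_{u=w}$ and $(\Delta_{V_t})_{u,w} = \id_{|u-w|=1}$ gives, by straightforward unfolding,
\begin{align*}
\bigl((D^{-1}\Delta_{V_t})^m D^{-1}\bigr)(x, y) = \sum_{\substack{x = v_0, v_1, \ldots, v_m = y \\ |v_i - v_{i-1}| = 1}} \prod_{i=0}^{m} \frac{1}{\lambda - \zeta(v_i)}.
\end{align*}
Summing over $m \geq 0$ and regrouping the product by vertex via $\prod_{i=0}^m (\lambda - \zeta(v_i))^{-1} = \prod_v (\lambda - \zeta(v))^{-n_v(\Gamma)}$ produces \cref{eq:pathExpansion}. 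The $m = 0$ contribution is $(\lambda - \zeta(x))^{-1} \id_{x=y}$, which matches the stated convention for the degenerate path at $x = y$.

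There is no substantive obstacle: once the resolvent factorisation is in place, this is pure bookkeeping. The only point of substance is justifying the Neumann expansion, but the lemma explicitly hypothesises convergence of the path sum, which forces absolute convergence of the operator series (since both sides agree term-by-term after grouping by the length $m$ of the path). The a priori bound $|\Gamma(x,y)| \ge |x-y|$ is automatic, since a nearest-neighbour path of $m$ edges has $m+1$ vertex visits and cannot connect $x$ and $y$ unless $m \geq |x-y|$.
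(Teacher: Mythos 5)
The paper does not give its own proof of this lemma; it defers to \cite[Lemma~A.2]{Astrauskas08}. Your argument — factoring $\lambda-\mathcal H_\zeta=D(I-D^{-1}\Delta_{V_t})$, Neumann-expanding $(I-D^{-1}\Delta_{V_t})^{-1}$, evaluating $(x,y)$ entries by unfolding the matrix products, and regrouping $\prod_{i=0}^m(\lambda-\zeta(v_i))^{-1}$ by vertex — is the standard resolvent/random-walk expansion that the cited lemma is built on, and the computation of each term is correct, including the $m=0$ degenerate-path convention. One small loose end worth tightening: the hypothesis ``provided the series converges'' should be read as absolute convergence of the path sum, since your final step regroups factors by vertex (a rearrangement) and your identification of the path sum with the Neumann series also permutes terms; conditional convergence of the path sum alone does not automatically force convergence of the operator series. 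On a finite torus $V_t$ this is only a matter of how one states the hypothesis, and in the paper's actual applications the series converges absolutely, so the argument stands.
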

\begin{lemma}[Cluster expansion; see {\cite[Lemma A.1]{Astrauskas08}}]
\label{lem:clusterExpansion}
Fix a non-empty subset ${\Pi \subseteq V_{t}}$.
For a positive field $\zeta$ and any $u \in \Pi$ let
$\mathcal{G}_{\zeta}(\lambda; x, y),
\tilde{\mathcal{G}}_{\zeta}(\lambda; x, y)$ and
$\tilde{\mathcal{G}}^{(u)}_{\zeta}(\lambda; x, y)$
denote the Green's functions of the Hamiltonians
$\mathcal{H}_{\zeta} := \Delta_{V_t} + \zeta, \tilde{\mathcal{H}}_{\zeta} := 
\Delta_{V_t} + \sum_{x \in V_t \setminus \Pi} \zeta(x)\id_{\{x\}}$ and
$\tilde{\mathcal{H}}_{\zeta}^{(u)} := \tilde{\mathcal{H}}_{\zeta} + \zeta
\id_{\{u\}}$ respectively. Then for any $x, y\in V_{t}$,
\begin{align*}
 \mathcal{G}_{\zeta}&(\lambda; x, y) = \tilde{\mathcal{G}}_{\zeta}(\lambda; x, y)
 \\
&+ \sum_{k \in \mathbb{N}}^{} \sum_{\gamma: u_1 \to \dots \to u_k}^{}
    \tilde{\mathcal{G}}_{\zeta}^{(u_1)}(\lambda; x, u_1)\zeta(u_1)
    \left( \prod_{l=2}^{k}
    \tilde{\mathcal{G}}_{\zeta}^{(u_l)}(\lambda; u_{l-1}, u_l) \zeta(u_l) \right)
    \tilde{\mathcal{G}}_{\zeta}(\lambda; u_k, y)
 \end{align*}
 provided that the series converges. Here the sum $\sum_{\gamma}^{}$ is
 taken over all ordered sets
\begin{align*}
\gamma : u_1 \to u_2 \to \dots \to u_k \quad \text{in\ } \Pi
\end{align*}
such that $u_{i-1} \neq u_i$ for each $2 \le i \le k$, having length $|\gamma|=k-1$.
\end{lemma}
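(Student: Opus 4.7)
My plan has two parts: first, iteratively apply the second resolvent identity to obtain a preliminary ``unrestricted'' path expansion; second, group consecutive repetitions of the same peak into blocks and sum each block as a geometric series, producing the single-peak Green's functions $\tilde{\mathcal{G}}_\zeta^{(u)}$.

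For Step 1 I will write $\mathcal{H}_\zeta = \tilde{\mathcal{H}}_\zeta + \sum_{u \in \Pi} \zeta(u)\id_{\{u\}}$ and apply the second resolvent identity to obtain
\[ \mathcal{G}_\zeta(\lambda; x, y) = \tilde{\mathcal{G}}_\zeta(\lambda; x, y) + \sum_{u \in \Pi} \tilde{\mathcal{G}}_\zeta(\lambda; x, u)\, \zeta(u)\, \mathcal{G}_\zeta(\lambda; u, y). \]
Iterating and passing to the limit (justified by the standing convergence hypothesis) yields the unrestricted expansion
\[ \mathcal{G}_\zeta(\lambda; x, y) = \tilde{\mathcal{G}}_\zeta(\lambda; x, y) + \sum_{k \geq 1} \sum_{(u_1,\ldots,u_k) \in \Pi^k} \tilde{\mathcal{G}}_\zeta(\lambda; x, u_1)\, \zeta(u_1)\, \tilde{\mathcal{G}}_\zeta(\lambda; u_1, u_2) \cdots \zeta(u_k)\, \tilde{\mathcal{G}}_\zeta(\lambda; u_k, y), \]
in which consecutive $u_i$'s may coincide.

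For Step 2 I will decompose each multi-index $(u_1,\ldots,u_k)$ into maximal runs of equal consecutive entries, writing the distinct consecutive values as $v_1, \ldots, v_j \in \Pi$ (so $v_l \neq v_{l+1}$) with run-lengths $m_l \geq 1$. The internal contribution of run $l$ is $[\zeta(v_l) \tilde{\mathcal{G}}_\zeta(\lambda; v_l, v_l)]^{m_l - 1}$, so summing over $m_l \geq 1$ produces the geometric factor $1/(1 - \zeta(v_l) \tilde{\mathcal{G}}_\zeta(\lambda; v_l, v_l))$. Independently, I will derive the key algebraic identity
\[ \tilde{\mathcal{G}}_\zeta^{(u)}(\lambda; x, u) = \frac{\tilde{\mathcal{G}}_\zeta(\lambda; x, u)}{1 - \zeta(u)\, \tilde{\mathcal{G}}_\zeta(\lambda; u, u)}, \qquad x \in V_t, \]
by applying the resolvent identity between $\tilde{\mathcal{H}}_\zeta^{(u)}$ and $\tilde{\mathcal{H}}_\zeta$ and solving the scalar equation obtained by setting $x = u$. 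Combining the geometric resummation with this identity converts each run into a factor $\tilde{\mathcal{G}}_\zeta^{(v_l)}(\lambda; v_{l-1}, v_l)\, \zeta(v_l)$, with the convention $v_0 := x$. The last block must be treated asymmetrically: its exit factor $\tilde{\mathcal{G}}_\zeta(\lambda; v_j, y)$ is deliberately \emph{not} absorbed, which is precisely what produces the trailing $\tilde{\mathcal{G}}_\zeta(\lambda; u_k, y)$ in the statement. Relabelling $(j, v_l) \mapsto (k, u_l)$ yields the stated formula.

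The analytical content of this proof is light; I expect the main obstacle to be combinatorial bookkeeping. Absolute convergence of the rearranged series is handed to us by hypothesis, so the real care is in tracking the boundary blocks correctly—ensuring that the first block collapses exactly as interior blocks do (to $\tilde{\mathcal{G}}_\zeta^{(v_1)}(\lambda; x, v_1)\, \zeta(v_1)$), and that the exit factor of the last block is intentionally left in its $\tilde{\mathcal{G}}_\zeta$ form, accounting for the asymmetry in the lemma.
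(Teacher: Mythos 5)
The paper does not prove this lemma itself; it simply cites \cite[Lemma A.1]{Astrauskas08} and leaves the derivation to that reference. Your proposal supplies the standard and correct argument: iterate the second resolvent identity for $\mathcal{H}_\zeta = \tilde{\mathcal{H}}_\zeta + \sum_{u \in \Pi} \zeta(u)\id_{\{u\}}$ to obtain the unrestricted expansion over $\Pi^k$ (consecutive repeats allowed), then resum each maximal run of identical consecutive indices geometrically. The scalar resolvent identity
$\tilde{\mathcal{G}}_\zeta^{(u)}(\lambda;x,u) = \tilde{\mathcal{G}}_\zeta(\lambda;x,u)\big/\bigl(1-\zeta(u)\tilde{\mathcal{G}}_\zeta(\lambda;u,u)\bigr)$
is correct, and it precisely absorbs each geometric factor $\bigl(1-\zeta(v_l)\tilde{\mathcal{G}}_\zeta(\lambda;v_l,v_l)\bigr)^{-1}$ together with the entry Green's function $\tilde{\mathcal{G}}_\zeta(\lambda;v_{l-1},v_l)$ into $\tilde{\mathcal{G}}_\zeta^{(v_l)}(\lambda;v_{l-1},v_l)$. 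Your boundary bookkeeping is also right: the first block enters from $x$ (so $v_0=x$), and the terminal factor $\tilde{\mathcal{G}}_\zeta(\lambda;v_j,y)$ is left unresummed because there is no subsequent block to absorb it, which is exactly the asymmetry visible in the stated formula.

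The one place to tighten is the treatment of convergence. The lemma's hypothesis concerns convergence of the \emph{clustered} series on the right-hand side, whereas your derivation requires two slightly different things: that the remainder of the truncated resolvent iteration vanishes, and that the \emph{unrestricted} sum over $\Pi^k$ converges absolutely so that the block rearrangement (a Fubini step) is legitimate. These are stronger than, and not formally implied by, convergence of the clustered series alone. In the regime where the lemma is applied this is not an issue --- one has $\zeta(u)\,|\tilde{\mathcal{G}}_\zeta(\lambda;u,u)|<1$ and summable geometric bounds that give absolute convergence at every stage --- but a complete write-up should state the absolute-convergence condition explicitly rather than inferring it from the weaker hypothesis as phrased.
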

Below we show that the series in \Cref{lem:pathExpansion} converges in our
setting, and results in a lower bound on $\varphi_{t, i}(x)$ for certain indices $i$ and sites
$x$. However, to achieve this lower bound, we need to apply the cluster expansion to the
auxiliary set $H(\lambda_{t, i})$ since paths that hit $H(\lambda_{t, i})$ might
contribute negative terms in the path expansion with respect to $\lambda_{t, i}$.
\begin{proposition}
\label{prop:positiveContribution}
For any $\lambda_{t, i}$ such that $1 \leq i \leq |V_t|^{\varepsilon}$, and for any
$u \notin H(\lambda)$ and any $x \in V_{t}$,
\begin{align*}
\tilde{\mathcal{G}}^{(u)}(\lambda_{t, i}; x, u) > 0 \,.
\end{align*}
\end{proposition}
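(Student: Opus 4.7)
My plan is to apply the path expansion of \Cref{lem:pathExpansion} directly to $\tilde{\mathcal{G}}^{(u)}(\lambda_{t,i}; x, u)$. Writing $\tilde{\mathcal{H}}^{(u)} = \Delta_{V_t} + \zeta$ with $\zeta := \tilde{\xi} + \xi(u)\id_{\{u\}}$, the expansion expresses the Green's function as a sum, over nearest-neighbour paths $\Gamma$ from $x$ to $u$ in $V_t$, of $\prod_v (\lambda_{t,i} - \zeta(v))^{-n_v(\Gamma)}$. If I verify that (i) every factor in each summand is strictly positive and (ii) the series converges, then since there is at least one path from $x$ to $u$, the Green's function is represented as a sum of strictly positive terms, and in particular it is strictly positive.

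For (i), at any site $v \neq u$ one has $\zeta(v) = \tilde{\xi}(v) \leq L_{t}$, and part~(\ref{lem:auxasympt1}) of \Cref{lem:auxasympt} ensures that eventually $\lambda_{t,i} > L_{t, \varepsilon'} > L_{t} + 2d$. At $v = u$, applying the min-max principle to the test function $\id_{\{u\}}$ yields $\zeta(u) \leq \tilde{\lambda}_{t}(u)$, while the hypothesis $u \notin H(\lambda_{t,i})$ forces $\tilde{\lambda}_{t}(u) < \lambda_{t,i}$.

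For (ii), the path expansion is precisely the Neumann series
\begin{equation*}
(\lambda_{t,i} - \tilde{\mathcal{H}}^{(u)})^{-1} = \sum_{m \geq 0} (D^{-1}\Delta_{V_t})^m D^{-1}, \qquad D := \operatorname{diag}(\lambda_{t,i} - \zeta),
\end{equation*}
which converges absolutely if and only if $\rho(D^{-1}\Delta_{V_t}) < 1$. Since $D^{-1}\Delta_{V_t}$ is similar to the symmetric matrix $D^{-1/2}\Delta_{V_t}D^{-1/2}$, its spectral radius equals $\sup_{v \neq 0}(v^\top \Delta_{V_t} v)/(v^\top D v)$, which is strictly less than $1$ precisely when $\lambda_{t,i}\,I - \tilde{\mathcal{H}}^{(u)}$ is positive definite, i.e.\ when $\lambda_{t,i} > \tilde{\lambda}_{t}(u)$; again this is the hypothesis.

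The main obstacle lies in step (ii): a crude operator-norm bound $\|D^{-1}\Delta_{V_t}\|_{\mathrm{op}} \leq 2d/\min_v(\lambda_{t,i} - \zeta(v))$ would require the strictly stronger condition $\lambda_{t,i} > \zeta(u) + 2d$, which may well fail since $\zeta(u)$ can be arbitrarily close to $\lambda_{t,i}$. The symmetrisation argument above replaces this with the sharp spectral gap condition $\lambda_{t,i} > \tilde{\lambda}_{t}(u)$, which is precisely what the definition of the auxiliary set $H(\cdot)$ is designed to provide.
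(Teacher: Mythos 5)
Your proof is correct, and it takes a genuinely different route from the paper's. The paper first invokes the cluster expansion (\Cref{lem:clusterExpansion}) with $\Pi = \{u\}$ to obtain the resolvent identity
\begin{equation*}
\tilde{\mathcal{G}}^{(u)}(\lambda_{t,i}; x, u) = \frac{\tilde{\mathcal{G}}(\lambda_{t,i}; x, u)}{1 - \xi(u)\,\tilde{\mathcal{G}}(\lambda_{t,i}; u, u)} \,,
\end{equation*}
and then treats numerator and denominator separately: the numerator is positive because the path expansion for the fully punctured Hamiltonian $\tilde{\mathcal{H}}$ converges trivially (all punctured potentials lie below $L_t$, and $\lambda_{t,i} > L_{t,\varepsilon'} > L_t + 2d$ by \Cref{lem:auxasympt}), while the positivity of the denominator rests on Astrauskas's characterisation of $\tilde{\lambda}_t(u)$ as the maximal solution of $\tilde{\mathcal{G}}(\lambda; u, u) = 1/\xi(u)$ together with the monotonicity of $\tilde{\mathcal{G}}(\lambda; u, u)$ in $\lambda$. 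You instead apply the path expansion directly to the single-peak Hamiltonian $\tilde{\mathcal{H}}^{(u)}$ and certify convergence via the Neumann series, recognising that $\rho(D^{-1}\Delta_{V_t}) < 1$ is equivalent to the positive definiteness of $\lambda_{t,i}I - \tilde{\mathcal{H}}^{(u)}$, i.e.\ to $\lambda_{t,i} > \tilde{\lambda}_t(u)$. Both arguments ultimately reduce to exactly the spectral-gap condition encoded in $u \notin H(\lambda_{t,i})$, but your symmetrisation sidesteps both the resolvent identity and the fixed-point characterisation from Astrauskas, which makes the proof more self-contained; the paper's route is better integrated with the cluster-expansion machinery it reuses later in the section (e.g.\ in \Cref{lem:boundOnGreenZ}). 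One point worth making explicit in your step (ii): the identification of $\rho(D^{-1/2}\Delta_{V_t}D^{-1/2})$ with the supremum of the Rayleigh quotient $v^{\top}\Delta_{V_t}v / v^{\top}Dv$ uses that this symmetric matrix is entrywise non-negative and irreducible, so by Perron--Frobenius its spectral radius coincides with its largest eigenvalue rather than merely with $\max\{|\lambda_{\max}|,|\lambda_{\min}|\}$; without that, the Rayleigh quotient only gives $\lambda_{\max}$, not $\rho$.
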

\begin{proof}
 As in \Cref{prop:expdecay},~\cite[Theorem B.3]{Astrauskas08} is valid for all $1 \le i \leq
  |V_t|^{\varepsilon}$. 
  Moreover, from the proof of~\cite[Theorem B.3]{Astrauskas08}, we conclude
  that~\cite[Theorem B.1]{Astrauskas08} is also valid for the same $i$'s. Hence,
  by~\cite[Theorem B.1(ii)]{Astrauskas08}, we have that $\lambda_{t, i} \notin
  \sigma(\mathcal{H}^{(z_{t, i})})$ so we can apply the cluster expansion in
  \Cref{lem:clusterExpansion} with
$\Pi \leftarrow \{u\}$ and $\zeta \leftarrow \xi \id_{V_{t} \setminus
\Pi^{(L_t)} \cup \{u\}}$ to obtain the following resolvent identity:
\begin{align}
\label{eq:resolventIdentity}
\tilde{\mathcal{G}}^{(u)}(\lambda_{t, i}; x, u) =
\tilde{\mathcal{G}}(\lambda_{t, i}; x, u)/ \left( 1 - \xi(u) \tilde{\mathcal{G}}
(\lambda_{t, i}; u, u)\right) \,.
\end{align}
Note that the series in \Cref{lem:pathExpansion} converges for
$\tilde{\mathcal{G}}(\lambda_{t, i}; x, u)$ by \Cref{lem:auxasympt} and hence
the numerator in \cref{eq:resolventIdentity} is positive. It then suffices to show that 
\begin{align}
\label{eq:notInHProdLessThanOne}
u \notin H(\lambda_{t, i}) \implies \xi(u)
\tilde{\mathcal{G}}(\lambda_{t, i}; u, u) < 1 \,.
\end{align}
Recall that, for $u \notin H(\lambda_{t, i})$, we have that $\lambda_{t, i} > \tilde{\lambda}_{t}(u) $. 
Moreover, by~\cite[Remark B.5]{Astrauskas08} we know that $\tilde{\lambda}_{t}(u)$ is the
principal eigenvalue of $\tilde{H}^{(u)}$ if and only if
$\tilde{\lambda}_{t}(u)$ is the maximal solution of the equation
\begin{align*}
\tilde{\mathcal{G}}(\lambda; u, u) = 1 / \xi(u) \,.
\end{align*}
Finally, by \Cref{lem:pathExpansion} it follows that
$\tilde{\mathcal{G}}(\lambda; u, u)$ is monotonically decreasing with $\lambda$. These three facts give us \cref{eq:notInHProdLessThanOne}.
\end{proof}

\begin{proposition}
For all $1 \leq i \le |V_t|^{\varepsilon}$ and any $x \in \xDomain{z_{t, i}}$,
\label{prop:lowerBoundOnG}
\begin{align*}
\tilde{\mathcal{G}}(\lambda_{t, i}; x , z_{t, i}) \ge
\frac{1}{{( \lambda_{t, i} )}^{|z_{t, i} - x|+1}}.
\end{align*}
\end{proposition}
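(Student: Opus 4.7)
The plan is to apply the path expansion of \Cref{lem:pathExpansion} to the punctured Hamiltonian $\tilde{\mathcal{H}} = \Delta_{V_t} + \tilde{\xi}$ (i.e.\ taking $\zeta \leftarrow \tilde{\xi}$) and then lower bound the resulting sum by keeping only a single term corresponding to a shortest nearest-neighbour path from $x$ to $z_{t,i}$ in $V_t$.

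The first task is to verify that the path expansion indeed converges at $\lambda = \lambda_{t,i}$. Since $\xi \ge 0$ (the Weibull field is non-negative), the punctured field satisfies $\tilde{\xi}(v) \in [0, L_t]$ for every $v \in V_t$. By part~\eqref{lem:auxasympt1} of \Cref{lem:auxasympt}, eventually $\lambda_{t,i} > L_{t, \varepsilon'} > L_t + 2d$ for all $1 \le i \le |V_t|^{\varepsilon}$, which gives the standard geometric bound
\begin{align*}
\sum_{\Gamma(x, z_{t,i})} \prod_{v} {(\lambda_{t,i} - \tilde{\xi}(v))}^{-n_v(\Gamma)} \le \sum_{m \ge |z_{t,i}-x|} (2d)^m {(\lambda_{t,i} - L_t)}^{-m-1} < \infty
\end{align*}
so that \Cref{lem:pathExpansion} applies and yields
$$\tilde{\mathcal{G}}(\lambda_{t,i}; x, z_{t,i}) = \sum_{\Gamma(x, z_{t,i})} \prod_{v \in V_t} {(\lambda_{t,i} - \tilde{\xi}(v))}^{-n_v(\Gamma)}.$$

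Every summand in this expansion is \emph{strictly positive}, since each factor satisfies $\lambda_{t,i} - \tilde{\xi}(v) \ge \lambda_{t,i} - L_t > 0$. Therefore a lower bound is obtained by retaining a single path; I would pick any nearest-neighbour path $\Gamma^\ast$ realising the torus distance $|z_{t,i} - x|$. Such a path contains exactly $|z_{t,i}-x|+1$ vertices counted with multiplicity, so $\sum_v n_v(\Gamma^\ast) = |z_{t,i}-x|+1$. Using the trivial bound $(\lambda_{t,i} - \tilde{\xi}(v))^{-1} \ge \lambda_{t,i}^{-1}$ (valid because $\tilde{\xi}(v) \ge 0$) on each factor gives
$$\tilde{\mathcal{G}}(\lambda_{t,i}; x, z_{t,i}) \;\ge\; \prod_{v} {(\lambda_{t,i} - \tilde{\xi}(v))}^{-n_v(\Gamma^\ast)} \;\ge\; \lambda_{t,i}^{-(|z_{t,i}-x|+1)},$$
as required.

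There is no real obstacle here: once positivity of the summands is secured (which is where the non-negativity of $\xi$ and the eigenvalue lower bound from \Cref{lem:auxasympt}\eqref{lem:auxasympt1} enter), the rest is a one-line path-picking argument. The condition $x \in \xDomain{z_{t,i}}$ plays no role in the argument itself; it is presumably stated only because this is the range in which the bound will be used downstream (e.g.\ combined with the exponential-decay bounds in \Cref{corry:expdecay,corry:upperbound}).
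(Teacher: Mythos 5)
Your proof is correct and follows exactly the same route as the paper: invoke the path expansion of \Cref{lem:pathExpansion} for $\tilde{\mathcal{H}}$, verify convergence via $\tilde{\xi}\le L_t$ and part~\eqref{lem:auxasympt1} of \Cref{lem:auxasympt}, note that every summand is positive, and keep only a shortest path, bounding each factor by $\lambda_{t,i}^{-1}$. Your side remark that the hypothesis $x\in\xDomain{z_{t,i}}$ is not actually used in the argument is also accurate — it is inherited from the downstream applications.
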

\begin{proof}
\Cref{prop:lowerBoundOnG} follows from
\Cref{lem:pathExpansion} since the sum in \cref{eq:pathExpansion} is
convergent by the asymptotics in \Cref{lem:auxasympt} and the definition of $\tilde{\mathcal{H}}$.
Moreover, every sum along any path is positive, so we may bound the sum in \cref{eq:pathExpansion} by the contribution from the shortest path 
$x \rightarrow \dots \rightarrow z_{t, i}$.
\end{proof}

\begin{proposition}
\label{prop:auxBoundsOnGreens}
     Fix $u \in \Pi^{(L_t)}$ and $v \in \Pi^{(L_t)} \setminus \{u\}$. Then for all $1 \leq i \le |V_t|$ and any $x \in V_t$, the following hold:
  \begin{align*}
    |\mathcal{\tilde{G}}(\lambda_{t, i}; x, u)| &\le \frac{b(\lambda_{t,
    i})}{\lambda_{t, i}(\lambda_{t, i} - L_{t})} e^{-A(\lambda_{t,
    i})|x-u|} \quad \text{and}\\
    |\tilde{\mathcal{G}}^{(u)}(\lambda_{t, i}; v, u)| &\le 
    \frac{B(\lambda_{t, i}, u)}{\xi(u)}
    e^{-A(\lambda_{t, i})|v-u|} \,.
  \end{align*}
\end{proposition}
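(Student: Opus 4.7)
The plan is to derive both bounds from the path expansion of \Cref{lem:pathExpansion}, exploiting the fact that $\tilde{\xi}$ is bounded above by $L_{t}$ on $V_{t} \setminus \Pi^{(L_{t})}$ and vanishes on $\Pi^{(L_{t})}$; this asymmetry will let us separate factors of $\lambda_{t,i}^{-1}$ coming from visits to $\Pi^{(L_{t})}$ from the cruder $(\lambda_{t,i} - L_{t})^{-1}$ factors arising elsewhere. Throughout I would take $\lambda_{t,i} > L_{t} + 2d$, which holds eventually almost surely whenever the bound is non-trivial (for example when $1 \le i \le |V_t|^{\varepsilon}$ by \Cref{lem:auxasympt}), ensuring that the associated geometric series converge.

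For the first bound, apply \Cref{lem:pathExpansion} with $\zeta \leftarrow \tilde{\xi}$. Each factor $(\lambda_{t,i} - \tilde{\xi}(w))^{-1}$ is at most $(\lambda_{t,i} - L_{t})^{-1}$, but since $u \in \Pi^{(L_{t})}$ every path from $x$ to $u$ visits $u$ at least once, so $n_u(\Gamma) \ge 1$ and one factor can be replaced by the strictly smaller $\lambda_{t,i}^{-1}$. Thus every summand is bounded by $\lambda_{t,i}^{-1} (\lambda_{t,i} - L_{t})^{-(|\Gamma|-1)}$. Since the number of $m$-step walks from $x$ is at most $(2d)^m$ with $m \ge |x-u|$, summing the geometric series in $m$ and identifying $e^{-A(\lambda)} = 2d/(\lambda - L_{t})$ and $b(\lambda) = (\lambda - L_{t})^2/(\lambda - L_{t} - 2d)$ delivers exactly $\frac{b(\lambda_{t,i})}{\lambda_{t,i}(\lambda_{t,i}-L_{t})} e^{-A(\lambda_{t,i})|x-u|}$.

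For the second bound, first invoke the resolvent identity \eqref{eq:resolventIdentity} already derived in the proof of \Cref{prop:positiveContribution},
$$ \tilde{\mathcal{G}}^{(u)}(\lambda_{t,i}; v, u) = \tilde{\mathcal{G}}(\lambda_{t,i}; v, u) \big/ \bigl(1 - \xi(u) \tilde{\mathcal{G}}(\lambda_{t,i}; u, u)\bigr), $$
and factor $\xi(u)$ out of the denominator to produce the term $|1/\xi(u) - \tilde{\mathcal{G}}(\lambda_{t,i}; u, u)|^{-1}$ appearing in $B(\lambda, u)$. Then sharpen the first-bound argument for the numerator: since $v, u \in \Pi^{(L_{t})}$ are distinct, every path contributing to $\tilde{\mathcal{G}}(\lambda_{t,i}; v, u)$ has $n_u, n_v \ge 1$, so two factors can be replaced by $\lambda_{t,i}^{-1}$, bounding each summand by $\lambda_{t,i}^{-2}(\lambda_{t,i} - L_{t})^{-(|\Gamma|-2)}$. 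The same geometric summation yields $|\tilde{\mathcal{G}}(\lambda_{t,i}; v, u)| \le (b(\lambda_{t,i})/\lambda_{t,i}^{2}) e^{-A(\lambda_{t,i})|v-u|}$, and combining this with the resolvent identity and the definition of $B(\lambda, u)$ gives the second bound.

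The main obstacle is precisely this bookkeeping of $\lambda$ versus $\lambda - L_{t}$ factors: naively bounding every factor in the path expansion by $(\lambda - L_{t})^{-1}$ loosens each inequality by a factor of $\lambda/(\lambda - L_{t})$, which would be insufficient for the applications in \Cref{sec:completion} (where the $\lambda^{-2}$ hidden inside $B(\lambda, u)$ is what ultimately controls the `single peak' nature of the top eigenfunctions). Everything else is a routine geometric-series computation.
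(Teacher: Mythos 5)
Your proposal is correct and takes essentially the same approach as the paper's proof, which simply cites \cite[Lemma B.2]{Astrauskas08} and points to \Cref{lem:pathExpansion} and \Cref{lem:clusterExpansion} without writing out the details. You have supplied the self-contained argument behind that citation: path expansion for $\tilde{\mathcal{G}}$ with the crucial accounting of $\lambda^{-1}$ factors for visits to $\Pi^{(L_t)}$ versus $(\lambda-L_t)^{-1}$ factors elsewhere, followed by the resolvent identity \eqref{eq:resolventIdentity} (the $\Pi \leftarrow \{u\}$ case of the cluster expansion) to transfer to $\tilde{\mathcal{G}}^{(u)}$ and isolate the $|1/\xi(u) - \tilde{\mathcal{G}}(\lambda;u,u)|^{-1}$ factor inside $B(\lambda,u)$. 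The only caveat, which you correctly flag yourself, is that the geometric series argument requires $\lambda_{t,i} > L_t + 2d$, so the bound is meaningful only on a range such as $1 \le i \le |V_t|^{\varepsilon}$ (where it is in fact applied); the statement's nominal range $1 \le i \le |V_t|$ should be read with that understanding, as in the cited source.
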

\begin{proof}
  This follows from \Cref{lem:clusterExpansion} and
  \Cref{lem:pathExpansion}; see~\cite[Lemma B.2]{Astrauskas08}.
\end{proof}
\begin{lemma}
\label{lem:boundOnGreenZ}
For all $1 \le i \le |V_t|^{\varepsilon}$ satisfying Assumption~\ref{assumpt:A},
and any $x \in V_t$ such that $x \in \xDomain{z_{t, i}}$,
\begin{align*}
\mathcal{G}^{(z_{t, i})}(\lambda_{t, i}; x , z_{t, i}) \ge
\tilde{\mathcal{G}}(\lambda_{t, i}; x , z_{t, i}) + o\left(
\tilde{\mathcal{G}}(\lambda_{t, i}; x , z_{t, i}) \right).
\end{align*}
\end{lemma}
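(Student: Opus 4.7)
My plan is to apply the cluster expansion of \Cref{lem:clusterExpansion} with the choice $\Pi := \Pi^{(L_t)} \setminus \{z_{t,i}\}$ and $\zeta := \xi \id_{V_t \setminus \{z_{t,i}\}}$. This identifies $\mathcal{H}_\zeta$ with the single-punctured Hamiltonian $\mathcal{H}^{(z_{t,i})}$ and $\tilde{\mathcal{H}}_\zeta$ with the fully-punctured $\tilde{\mathcal{H}}$, yielding a decomposition
\begin{align*}
\mathcal{G}^{(z_{t,i})}(\lambda_{t,i}; x, z_{t,i}) = \tilde{\mathcal{G}}(\lambda_{t,i}; x, z_{t,i}) + \mathcal{R},
\end{align*}
where $\mathcal{R}$ is the sum over ordered sets $u_1 \to \cdots \to u_k$ in $\Pi$ of products of factors $\tilde{\mathcal{G}}^{(u_l)}(\lambda_{t,i}; u_{l-1}, u_l)\,\xi(u_l)$ together with a final $\tilde{\mathcal{G}}(\lambda_{t,i}; u_k, z_{t,i})$. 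The goal reduces to showing $\mathcal{R} \ge -o(\tilde{\mathcal{G}}(\lambda_{t,i}; x, z_{t,i}))$.

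The key step is to split the sum according to whether any $u_l$ lies in $H(\lambda_{t,i})$. For a \emph{safe} path with every $u_l \notin H(\lambda_{t,i})$, \Cref{prop:positiveContribution} gives $\tilde{\mathcal{G}}^{(u_l)}(\lambda_{t,i}; \cdot\,, u_l) > 0$, while the final factor $\tilde{\mathcal{G}}(\lambda_{t,i}; u_k, z_{t,i})$ is positive by the convergent path expansion of \Cref{lem:pathExpansion} (convergence follows from \Cref{lem:auxasympt}). Hence the contribution from safe paths to $\mathcal{R}$ is nonnegative and may simply be dropped.

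The main work lies in bounding the \emph{dangerous} contribution from paths that visit some $u_l \in H(\lambda_{t,i})$. Here I would invoke \Cref{assumpt:A}: any such $u_l$ satisfies $|u_l| > |z_{t,i}|(1+h_t)$, while $x \in \xDomain{z_{t,i}}$, so $|x-z_{t,i}| \le h_t|z_{t,i}|/3$ or $x=0$. A triangle inequality argument in both cases then shows that the total path length exceeds $|x - z_{t,i}|$ by at least $\Omega(h_t |z_{t,i}|)$. Applying the exponential bounds of \Cref{prop:auxBoundsOnGreens} at rate $A(\lambda_{t,i}) \sim \tfrac{1}{\gamma}\log\log t$ (using \Cref{lem:auxasympt}), each dangerous path is bounded by a product decaying like $e^{-A(\lambda_{t,i})\cdot(|x-z_{t,i}| + \Omega(h_t|z_{t,i}|))}$. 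Comparing with the lower bound $\tilde{\mathcal{G}}(\lambda_{t,i}; x, z_{t,i}) \ge \lambda_{t,i}^{-(|x-z_{t,i}|+1)}$ from \Cref{prop:lowerBoundOnG}, and noting that $\log \lambda_{t,i} - A(\lambda_{t,i}) = O(1)$, the extra path length produces a ratio of order $e^{-\Omega(h_t|z_{t,i}|\log\log t)}$, which vanishes because $h_t \log\log t \to \infty$ under our scaling assumptions.

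The main technical obstacle I anticipate is the summation over the combinatorics of all dangerous paths. The almost-sure separation $r(\Pi^{(L_t)}) \to \infty$ from \Cref{lem:assep} ensures that each additional intermediate site adds an exponentially small factor, so the sum over $k$ and path shapes forms a convergent geometric series up to a polynomial-in-$|V_t|$ factor, which is swamped by the super-polynomial decay. A subsidiary difficulty is that the constants $B(\lambda_{t,i}, u_l)$ in \Cref{prop:auxBoundsOnGreens} can be large at resonant sites where $\lambda_{t,i}$ is close to $\tilde{\lambda}_t(u_l)$; these must be controlled using the correspondence and eigenvalue gaps implicit in \Cref{prop:lambda} and \Cref{lem:auxasympt}, following the same strategy as in \cite[Appendix B]{Astrauskas08}.
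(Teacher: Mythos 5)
Your proposal follows essentially the same route as the paper: cluster expansion with $\Pi \leftarrow \Pi^{(L_t)}\setminus\{z_{t,i}\}$, dropping the nonnegative contribution from paths avoiding $H(\lambda_{t,i})$ via \Cref{prop:positiveContribution}, and bounding the dangerous contribution through \Cref{prop:auxBoundsOnGreens}, the triangle inequality, \Cref{assumpt:A}, and the comparison with the lower bound of \Cref{prop:lowerBoundOnG}, concluding via $h_t\log\log t \to \infty$. The two cases you identify ($x=0$ versus $x$ in the ball) and the appeal to the Astrauskas-style geometric-series control of the $B(\lambda,u)$ factors are precisely what the paper does.
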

\begin{proof}
From the proof of~\cite[Theorem 4.1]{Astrauskas08} it follows that the series in
the cluster expansion in \Cref{lem:clusterExpansion} is convergent with
$\Pi \leftarrow \Pi^{(L_t)} \setminus \{z_{t, i}\}$ 
and $\zeta \leftarrow \xi \id_{V_t \setminus \{z_{t, i} \}}$. Denote
by $u_1 \to \dots \to u_k$ an ordered set consisting of points from a set
$\Pi^{(L_{t})} \setminus \left\{ z_{t, i} \right\}$
\begin{align*}
& \mathcal{G}^{(z_{t, i})}(\lambda_{t, i}; x , z_{t, i}) =
\tilde{\mathcal{G}}(\lambda_{t, i}; x , z_{t, i})   \\
  + & \sum_{k=1}^{\infty} \sum_{\substack{\gamma: u_1 \to \dots \to u_k \\
  \gamma \cap H(\lambda_{t, i}) = \varnothing}}^{}
  \tilde{\mathcal{G}}^{(u_{1})}(\lambda_{t, i}; x , u_1) \xi(u_1)
\left( \prod_{l=2}^{k} \tilde{\mathcal{G}}^{(u_{l})}(\lambda_{t, i}, u_{l-1},
u_{l}) \xi(u_l) \right) \tilde{\mathcal{G}}(\lambda_{t, i}; u_{k}, z_{t, i})
\\
  + & \sum_{k=1}^{\infty} \sum_{\substack{\gamma: u_1 \to \dots \to u_k \\
  \gamma \cap H(\lambda_{t, i}) \neq  \varnothing}}^{}
  \tilde{\mathcal{G}}^{(u_{1})}(\lambda_{t, i}; x , u_1) \xi(u_1)
 \left( \prod_{l=2}^{k} \tilde{\mathcal{G}}^{(u_{l})}(\lambda_{t, i};  u_{l-1},
 u_{l}) \xi(u_l) \right) \tilde{\mathcal{G}}(\lambda_{t, i}; u_{k}, z_{t, i}).
\end{align*}
The first summation on the right hand side is positive by
\Cref{prop:positiveContribution}.
It remains to show that the second summation is negligible, i.e.\ that
\begin{align*}
\Big| \sum_{k=1}^{\infty} & \sum_{\substack{\gamma: u_1 \to \dots \to u_k \\
\gamma \cap H(\lambda_{t, i}) \neq \varnothing}}^{} 
\tilde{\mathcal{G}}^{(u_{1})}(\lambda_{t, i}; x , u_1) \xi(u_1)
  \left( \prod_{l=2}^{k} \tilde{\mathcal{G}}^{(u_{l})}(\lambda_{t, i}; u_{l-1},
  u_{l}) \xi(u_l) \right) \tilde{\mathcal{G}}(\lambda_{t, i}; u_{k}; z_{t, i}) \Big|
  \\ \nonumber 
 &= o\left(\tilde{\mathcal{G}}(\lambda_{t, i}; x , z_{t, i}) \right) \,.
\end{align*}
First apply \Cref{prop:auxBoundsOnGreens}, which gives that this summation is bounded above by
\begin{align*}
\frac{b(\lambda_{t, i})}{\lambda_{t, i}(\lambda_{t, i} - L_{t})} &
\sum_{k \in \mathbb{N}}^{} \sum_{\substack{\gamma: u_1 \to \dots \to
    u_k \\ \gamma \cap H(\lambda_{t, i}) \neq \varnothing}}^{} 
    B(\lambda_{t, i}, u_{1})  e^{-A(\lambda_{t, i})|u_1 - x|}
  \\ & \times
    \left( \prod_{l=2}^{k} B(\lambda_{t, i}, u_{l}) 
    e^{-A(\lambda_{t, i})|u_{l-1} - u_{l}|} \right) 
    e^{-A(\lambda_{t, i})|u_{k}-z_{t, i}|} \,. 
  \end{align*}
Now use the bound $A(\lambda_{t, i})
 |u_{j-1} - u_{j}| > (1-\delta_t) A(\lambda_{t, i}) |u_{j-1} -
  u_{j}| + \delta_t r(\Pi^{(L_{t})})$ for each $j$ in the product,   and let
  $u_l \in H(\lambda_{t, i})$ for the ordered set $\gamma$ in the above
  summation. Applying $|u_1 - x| + |u_1 - u_2| + \cdots + |u_{l-1} - u_{l}| >
  |u_{l} - x|$,
$|u_{l+1} - u_{l+2}| + \cdots + |u_{k} - z_{t, i}| > 0$,
and the fact that the number of ordered sets is bounded above by $|\Pi^{(L_{t})}|^{k}$, we have that the summand is bounded above by
\begin{align*}
\frac{b(\lambda_{t, i})}{\lambda_{t, i}(\lambda_{t, i} - L_{t})} &
e^{-(1-\delta_t)A(\lambda_{t, i}) |u_l-x|} 
\nonumber 
\\ 
& \times \sum_{k \in \mathbb{N}}^{} \left(|\Pi^{(L_{t})}| \max_{u \in
  \Pi^{(L_{t})} \setminus \{z_{t, i}\} } B(\lambda_{t, i}, u) e^{-\delta_t
    A(\lambda_{t, i})r(\Pi^{(L_{t})})}  \right)^{k}
    \nonumber
\\ \le  \frac{b(\lambda_{t, i})}{\lambda_{t, i}(\lambda_{t, i} - L_{t})} &
e^{-(1-\delta_t)A(\lambda_{t, i})|u_{l} - x|} \,.
\end{align*}
Note that the last step is justified by~\cite[B.6]{Astrauskas08}, which is valid since
\cite[Theorem B.1]{Astrauskas08} is applicable, as in
\Cref{prop:positiveContribution}. There are now two cases to consider: (i) $x =
0$; and (ii) $x \in \ballOfExpDecay{z_{t, i}}$.
\par \textit{Case~(i).} Assumption~\ref{assumpt:A} implies that $|u_l| > |z_{t,
i}|(1 + h_t)$, and so by \Cref{prop:lowerBoundOnG} we get that 
\begin{align*}
&\frac{1}{\tilde{\mathcal{G}}(\lambda_{t, i}; x , z_{t, i})} \times
\frac{b(\lambda_{t, i})}{\lambda_{t, i}(\lambda_{t, i} - L_{t})}
e^{-A(\lambda_{t, i})(1-\delta_t)(1 + h_t)|z_{t, i}|}
\\ &= O \left( \text{const.}^{|z_{t, i}|} \left(\frac{2d}{\lambda_{t, i} -
L_{t}}\right)^{|z_{t, i}| (h_t - \delta_t - \delta_t h_t )} \right) 
\\
&= O\left[ \exp \left\{ |z_{t, i}| \left( \log (\text{const.}) 
  - \frac{(h_t - \delta_t - \delta_t h_t)}{\gamma} 
\log \log t \right) \right\} \right] = o(1),
\end{align*}
since $h_t \log \log t \to \infty$ and $\delta_t = o(h_t)$. Note that in the last step we also bounded $\lambda_{t, i} - L_t$ from below using the asymptotics in part~(\ref{lem:auxasympt1}) of
\Cref{lem:auxasympt}.
\par
\par \textit{Case~(ii).} Apply the bound
\begin{align*}
  |u_l - x| \ge |u_l - z_{t, i}| - |z_{t, i} - x| \ge h_t |z_{t, i}| -
  |z_{t, i} - x|
\end{align*}
then, similarly to as in \textit{Case (i)}, by \Cref{prop:lowerBoundOnG} we get that
\begin{align*}
&\frac{1}{\tilde{\mathcal{G}}(\lambda_{t, i}; x , z_{t, i})} \times
\frac{b(\lambda_{t, i})}{\lambda_{t, i}(\lambda_{t, i} - L_{t})}
e^{-A(\lambda_{t, i})(1-\delta_t)(h_t |z_{t, i}| - |z_{t, i} - x|)}
\\ &= O \left( \text{const.}^{|z_{t, i} - x|} \left(\frac{2d}{\lambda_{t, i} -
L_{t}}\right)^{(1 - \delta_t) h_t | z_{t, i}| - (2 - \delta_t)|z_{t, i} - x|} \right) 
\\
&= O \left( \text{const.}^{|z_{t, i}|} \left(\frac{2d}{\lambda_{t, i} -
L_{t}}\right)^{\frac{1}{3} h_t(1 + o(1)) |z_{t, i}|} \right)
= o(1)
\end{align*}
since $|z_{t, i} - x| \leq h_t |z_{t, i}|/3$.
\end{proof}
\begin{proposition}[See {\cite[Theorem B.1(iii)]{Astrauskas08}}]
\label{prop:boundOnGz}
  For all $1 \le i \le |V_t|^{\varepsilon}$ and $x \in V_t$,
  \begin{align*}
    |\mathcal{G}^{(z_{t, i})}(\lambda_{t, i}; x, z_{t, i})| \le 
    \frac{2 b(\lambda_{t, i})}{\lambda_{t, i}(\lambda_{t, i}-L_{t})}
    e^{-(1-\delta_t)A(\lambda_{t, i})|x-z_{t, i}|} \,.
  \end{align*}
\end{proposition}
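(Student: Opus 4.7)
The plan is to apply the cluster expansion of \Cref{lem:clusterExpansion} with exactly the choices used in the proof of \Cref{lem:boundOnGreenZ}, namely $\Pi \leftarrow \Pi^{(L_{t})} \setminus \{z_{t,i}\}$ and $\zeta \leftarrow \xi \id_{V_{t} \setminus \{z_{t,i}\}}$, which expresses $\mathcal{G}^{(z_{t,i})}(\lambda_{t,i}; x, z_{t,i})$ as the leading term $\tilde{\mathcal{G}}(\lambda_{t,i}; x, z_{t,i})$ plus a sum over ordered tuples $u_1 \to \cdots \to u_k$ in $\Pi^{(L_{t})} \setminus \{z_{t,i}\}$ of the corresponding products of single-puncture Green's functions. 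By \Cref{prop:lambda} and \Cref{lem:auxasympt} the series is absolutely convergent, so it is legitimate to take absolute values and bound the leading term and each factor in each summand using \Cref{prop:auxBoundsOnGreens}.

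The first key step is to extract a full exponential decay in $|x - z_{t,i}|$ from each summand. For this I would split the rate as $A(\lambda_{t,i}) = (1-\delta_t) A(\lambda_{t,i}) + \delta_t A(\lambda_{t,i})$ on every edge of the path $x \to u_1 \to \cdots \to u_k \to z_{t,i}$, then apply the triangle inequality
\begin{align*}
|u_1 - x| + \sum_{l=2}^{k} |u_{l-1} - u_l| + |u_k - z_{t,i}| \ge |x - z_{t,i}|
\end{align*}
to the $(1-\delta_t)$-portion, while on the $\delta_t$-portion I would use the separation bound $|u_{l-1} - u_l| \ge r(\Pi^{(L_{t})})$ from \Cref{lem:assep} to extract a factor $e^{-\delta_t A(\lambda_{t,i}) r(\Pi^{(L_{t})})}$ per step (the edges incident to $x$ and to $z_{t,i}$ only contributing positively).

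The second key step is to control the resulting multi-sum by a geometric series. Bounding the number of ordered tuples of length $k$ by $|\Pi^{(L_{t})}|^k$ and each vertex prefactor $B(\lambda_{t,i}, u_l)$ uniformly, the sum over $k \ge 1$ becomes a geometric series in the quantity
\begin{align*}
\Lambda_t := |\Pi^{(L_{t})}| \max_{u \in \Pi^{(L_{t})} \setminus \{z_{t,i}\}} B(\lambda_{t,i}, u) \, e^{-\delta_t A(\lambda_{t,i}) r(\Pi^{(L_{t})})} \,.
\end{align*}
By the choice of $\delta_t$ and exactly the inequality \cite[(B.6)]{Astrauskas08} already invoked in the proof of \Cref{lem:boundOnGreenZ}, one has $\Lambda_t \le 1/2$ eventually almost surely; the geometric series then sums to at most $1$, and combining with the direct bound on $\tilde{\mathcal{G}}(\lambda_{t,i}; x, z_{t,i})$ from \Cref{prop:auxBoundsOnGreens} gives the claimed prefactor $2 b(\lambda_{t,i}) / (\lambda_{t,i}(\lambda_{t,i} - L_{t}))$.

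The main obstacle, as in \Cref{lem:boundOnGreenZ}, is the verification that $\Lambda_t \le 1/2$: it requires weighing the polynomial growth of $|\Pi^{(L_{t})}|$ and the possibly large factors $B(\lambda_{t,i}, u)$ against the exponential decay in $\delta_t A(\lambda_{t,i}) r(\Pi^{(L_{t})})$, and it is precisely here that the tuning of $\delta_t$ in terms of $|V_t|^{(1-2\theta')/d}$, $L_{t,\varepsilon'} - L_{t}$, and $r(\Pi^{(L_{t})})$ becomes essential. Once that is in hand, the remainder of the argument is a bookkeeping of the exponential rates above, and the positivity issues that complicated the lower bound in \Cref{lem:boundOnGreenZ} do not arise here because we are only seeking an absolute-value bound.
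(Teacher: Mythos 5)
Your proposal reconstructs the cluster-expansion argument behind \cite[Theorem B.1(iii)]{Astrauskas08} directly, whereas the paper's own proof is a one-line citation to that result (together with the observation, already made in the proof of \Cref{prop:positiveContribution}, that the hypotheses of \cite[Theorem B.1]{Astrauskas08} are verified for all $1\le i\le |V_t|^{\varepsilon}$). So you are taking a genuinely more self-contained route: what the paper imports wholesale, you re-derive from \Cref{lem:clusterExpansion} and \Cref{prop:auxBoundsOnGreens}. The trade-off is that your argument still relies on \cite[(B.6)]{Astrauskas08} for the key smallness $\Lambda_t\le 1/2$, so full self-containment is not achieved, while the paper's approach is shorter but opaque.

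There is one bookkeeping slip worth flagging. To make $\Lambda_t$ the exact ratio of the geometric series you need one factor $e^{-\delta_t A(\lambda_{t,i}) r(\Pi^{(L_t)})}$ for \emph{each} of the $k$ cluster points $u_1,\dots,u_k$, so as to cancel the $|\Pi^{(L_t)}|^k \prod_{l=1}^k B(\lambda_{t,i},u_l)$. The $k-1$ internal edges $u_{l-1}\to u_l$ supply $k-1$ of these, and the $k$th must come from the terminal edge $u_k\to z_{t,i}$: since $u_k$ and $z_{t,i}$ are distinct points of $\Pi^{(L_t)}$, that edge also satisfies $|u_k-z_{t,i}|\ge r(\Pi^{(L_t)})$. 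Your parenthetical remark that "the edges incident to $x$ and to $z_{t,i}$ only contribut[e] positively" is therefore correct for the $x\to u_1$ edge (no separation guarantee there) but \emph{not} for $u_k\to z_{t,i}$, which is essential. With only $k-1$ factors, the series you write would carry an uncontrolled prefactor $|\Pi^{(L_t)}|\max_u B(\lambda_{t,i},u)$, and the claimed bound $\sum_{k\ge 1}(\cdots)\le 1$ would not follow from $\Lambda_t\le 1/2$. Once the terminal edge is included in the $\delta_t$-accounting, your argument is sound and delivers the stated estimate, including the factor of $2$ from the leading term $\tilde{\mathcal{G}}(\lambda_{t,i};x,z_{t,i})$ plus the summed remainder.
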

\begin{proof}
As in \Cref{prop:positiveContribution},~\cite[Theorem B.1]{Astrauskas08} is
valid for all $1 \le i \leq
  |V_t|^{\varepsilon}$.
\end{proof}
\begin{proposition}[Lower bound on eigenfunctions]
\label{prop:lowerbound}
For all $1 \leq i \le |V_t|^{\varepsilon}$ satisfying Assumption~\ref{assumpt:A}
and any $x \in \xDomain{z_{t, i}}$, eventually
\begin{align*}
\log |\varphi_{t, i}(x)| > -\frac{|z_{t, i}-x|}{\gamma} (1 + f_t) \log \log t \,.
\end{align*}  
\end{proposition}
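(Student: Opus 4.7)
The plan is to use the resolvent identity to express $\varphi_{t,i}(x)$ in terms of a Green's function that we already control, and then combine the Green's function lower bound from \Cref{lem:boundOnGreenZ} with the a priori control we have on $|\varphi_{t,i}(z_{t,i})|$.

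First, I would write $\mathcal{H} = \mathcal{H}^{(z_{t,i})} + \xi(z_{t,i})\id_{\{z_{t,i}\}}$ and apply the eigenfunction equation $(\lambda_{t,i}-\mathcal{H})\varphi_{t,i}=0$ to obtain
\[
  (\lambda_{t,i}-\mathcal{H}^{(z_{t,i})})\varphi_{t,i} \;=\; \xi(z_{t,i})\,\varphi_{t,i}(z_{t,i})\,\id_{\{z_{t,i}\}}.
\]
Since \Cref{prop:positiveContribution} (via \cite[Thm.~B.1]{Astrauskas08}) tells us $\lambda_{t,i}\notin\sigma(\mathcal{H}^{(z_{t,i})})$, inverting gives the identity
\[
  \varphi_{t,i}(x) \;=\; \xi(z_{t,i})\,\varphi_{t,i}(z_{t,i})\,\mathcal{G}^{(z_{t,i})}(\lambda_{t,i};x,z_{t,i}),
\]
so everything reduces to lower-bounding each of the three factors in absolute value.

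Second, for $x\in\xDomain{z_{t,i}}$, \Cref{lem:boundOnGreenZ} gives $\mathcal{G}^{(z_{t,i})}(\lambda_{t,i};x,z_{t,i}) \ge (1-o(1))\tilde{\mathcal{G}}(\lambda_{t,i};x,z_{t,i})$, and \Cref{prop:lowerBoundOnG} then yields $\tilde{\mathcal{G}}(\lambda_{t,i};x,z_{t,i})\ge \lambda_{t,i}^{-(|z_{t,i}-x|+1)}$. Taking logs and inserting the asymptotic $\log\lambda_{t,i} = \tfrac{1}{\gamma}\log\log t + O(1)$ that follows from \Cref{lem:auxasympt}(a), I obtain
\[
  \log \mathcal{G}^{(z_{t,i})}(\lambda_{t,i};x,z_{t,i}) \;\ge\; -\frac{|z_{t,i}-x|}{\gamma}\log\log t \;-\; C(|z_{t,i}-x|+1)
\]
for some constant $C>0$. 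For the prefactor $\xi(z_{t,i})$ I use \Cref{prop:lambda} and the min-max bound $\xi(z_{t,i})\ge \tilde\lambda_t(z_{t,i})-O(1) = \lambda_{t,i}-O(1)$ from \Cref{lem:minmax}, which gives $\log\xi(z_{t,i}) = \tfrac{1}{\gamma}\log\log t + O(1) \ge 0$ eventually.

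Third, for $|\varphi_{t,i}(z_{t,i})|$ I use the $\ell^2$-normalisation together with \Cref{corry:expdecay}: the tail sum
\[
  \sum_{y\neq z_{t,i}} |\varphi_{t,i}(y)|^2 \;\le\; \sum_{n\ge 1} Cn^{d-1}(\log t)^{-2(1-f_t)n/\gamma}
\]
is $o(1)$ as $t\to\infty$, so $|\varphi_{t,i}(z_{t,i})| \ge 1 - o(1)$ and hence $\log|\varphi_{t,i}(z_{t,i})| = o(1)$.

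Finally, assembling the three estimates yields
\[
  \log|\varphi_{t,i}(x)| \;\ge\; -\frac{|z_{t,i}-x|}{\gamma}\log\log t - C'(|z_{t,i}-x|+1)
\]
for some $C'>0$, and the desired bound follows because $f_t\log\log t\to\infty$ (since $f_t h_t \gg 1/\log\log t$ and $h_t\to 0$), so that $C'(|z_{t,i}-x|+1)\ll f_t\tfrac{|z_{t,i}-x|}{\gamma}\log\log t$ whenever $|z_{t,i}-x|\ge 1$. I expect the main obstacle to be invoking \Cref{lem:boundOnGreenZ} correctly: that is, checking that Assumption~\ref{assumpt:A} is what suppresses the potentially negative contributions in the cluster expansion on $\xDomain{z_{t,i}}$, and that the $o(1)$ factor there is genuinely negligible compared with the small slack $f_t\log\log t$ in the target bound.
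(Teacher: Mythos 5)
Your proof is correct and uses essentially the same machinery as the paper: the proportionality $\varphi_{t,i}(\cdot)\propto\mathcal{G}^{(z_{t,i})}(\lambda_{t,i};\cdot,z_{t,i})$, the positivity/lower bound from \Cref{lem:boundOnGreenZ} and \Cref{prop:lowerBoundOnG}, and the asymptotic $\log\lambda_{t,i}=\tfrac1\gamma\log\log t+O(1)$, with $f_t\log\log t\to\infty$ absorbing the $O(1)$-per-step slack. The only (cosmetic) difference is in how the proportionality constant is controlled: the paper writes $\varphi_{t,i}(x)=\mathcal{G}^{(z_{t,i})}(\lambda_{t,i};x,z_{t,i})\,\big(\sum_y \mathcal{G}^{(z_{t,i})}(\lambda_{t,i};y,z_{t,i})^2\big)^{-1/2}$ and bounds the $\ell^2$-norm above via \Cref{prop:boundOnGz}, whereas you use the resolvent identity $\varphi_{t,i}(x)=\xi(z_{t,i})\varphi_{t,i}(z_{t,i})\mathcal{G}^{(z_{t,i})}(\lambda_{t,i};x,z_{t,i})$ and lower-bound $|\varphi_{t,i}(z_{t,i})|$ via $\ell^2$-normalisation plus the upper decay bound \Cref{corry:expdecay}; both routes are sound and rely on the same underlying spectral results.
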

\begin{proof}
Again, as in \Cref{prop:positiveContribution},~\cite[Theorem B.1]{Astrauskas08} is valid for all $1 \le i \leq |V_t|^{\varepsilon}$, and so we have that $\lambda_{t, i} \notin \sigma(\mathcal{H}^{(z_{t, i})})$ and
\begin{align*}
\varphi_{t, i}(x) &= \mathcal{G}^{( z_{t, i} )} \left( \lambda_{t, i}; x, z_{t, i}
\right) \bigg( \sum_{y \in V_{t}}^{} (
\mathcal{G}^{(z_{t, i})}(\lambda_{t, i}; y, z_{t, i}))^{2} \bigg)^{-\frac{1}{2}} \,.
\end{align*}
Note that \Cref{prop:lowerBoundOnG} and \Cref{lem:boundOnGreenZ} imply that
\begin{align*}
\log \mathcal{G}^{(z_{t, i})}(\lambda_{t, i}; x, z_{t, i}) > 
-(|z_{t, i} - x|+1) \log \lambda_{t, i} + o(1) 
\end{align*}
and \Cref{prop:boundOnGz} implies that
\begin{align*}
\bigg( \sum_{y \in V_{t}}^{} ( \mathcal{G}^{(z_{t, i})}(\lambda_{t, i}; y, z_{t,
i}))^{2} \bigg)^{\frac{1}{2}} 
&\le \frac{4b(\lambda_{t, i})}{\lambda_{t, i}(\lambda_{t, i} - L_t)} = \frac{4(\lambda_{t, i} - L_t)}{\lambda_{t, i}(\lambda_{t, i} - L_t - 2d)} \,.
\end{align*}
Combining all the above we conclude that
\begin{align*}
  \log |\varphi_{t, i}(x)| &> -|z_{t, i}-x| \log \lambda_{t, i} + O(1) \,.
\end{align*}
The result then follows from part~(\ref{lem:auxasympt0}) of \Cref{lem:auxasympt}, since $1 / \log \log t = o(f_t)$.
\end{proof}

\begin{corollary}[Lower bound on eigenfunctions at zero]
\label{corry:lowerboundAtZero}
 For any $1 \le i \le |V_t|^{\varepsilon}$ satisfying
  Assumption~\ref{assumpt:A} such that $|z_{t, i}| < r_t g_t$,
  \begin{align*}
   \log |\varphi_{t, i}(0)| > - \frac{|z_{t, i}|}{\gamma} \log \log t 
    + o(t d_t e_t) \,.
  \end{align*}
\end{corollary}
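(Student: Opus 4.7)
The plan is to apply Proposition~\ref{prop:lowerbound} at the point $x = 0$, which lies in $\xDomain{z_{t,i}} = B(z_{t,i}, h_t|z_{t,i}|/3) \cup \{0\}$ by construction, and then check that the resulting error term is $o(t d_t e_t)$ using the scale relationships declared after \Cref{prop:box}.

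Concretely, substituting $x = 0$ into the bound from \Cref{prop:lowerbound} gives, eventually,
\begin{align*}
\log |\varphi_{t, i}(0)| > -\frac{|z_{t, i}|}{\gamma} (1 + f_t) \log \log t = -\frac{|z_{t, i}|}{\gamma} \log \log t - \frac{|z_{t, i}|}{\gamma} f_t \log \log t,
\end{align*}
so it suffices to show that the second term on the right is $o(t d_t e_t)$.

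Using the definitions $r_t = t (d \log t)^{1/\gamma - 1}/\log \log t$ and $d_t = (d \log t)^{1/\gamma - 1}/\gamma$, we have the identity $r_t \log \log t = \gamma t d_t$. Since by hypothesis $|z_{t, i}| < r_t g_t$, we obtain
\begin{align*}
\frac{|z_{t, i}|}{\gamma} f_t \log \log t < \frac{r_t g_t}{\gamma} f_t \log \log t = t d_t (f_t g_t).
\end{align*}
The chain of scale relations $f_t \ll h_t \ll e_t / g_t$ established in \Cref{sec:outline} yields $f_t g_t \ll e_t$, hence $t d_t f_t g_t = o(t d_t e_t)$, as required.

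Putting these two observations together gives the claimed bound. The step that does any real work here is \Cref{prop:lowerbound} itself (and behind it the resolvent estimates for $\mathcal{G}^{(z_{t,i})}$ at the origin, which rely on \Cref{assumpt:A}); the corollary is a direct quantitative consequence, requiring only the scale bookkeeping above.
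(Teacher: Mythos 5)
Your proposal is correct and matches the paper's own proof exactly: the corollary is a direct application of \Cref{prop:lowerbound} at $x = 0$ together with the observation that $r_t g_t f_t \log \log t = o(t d_t e_t)$, which you verified carefully via the identity $r_t \log \log t = \gamma t d_t$ and the scale relation $f_t g_t \ll e_t$.
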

\begin{proof}
This follows from \Cref{prop:lowerbound} since $r_t g_t f_t \log \log t = o (t d_t e_t) $.
\end{proof}

\section{Completing the Proof}
\label{sec:completion}
In this section, we complete the proof of the auxiliary \Cref{thm:aux} using the
results obtained in Sections~\ref{sec:pp} and~\ref{sec:spec}. Throughout this
section we fix the constant $c$ from \Cref{corry:upperbound}, and also fix
constants $0 < \varepsilon'' < \varepsilon < \theta$ as in Sections~\ref{sec:pp} and~\ref{sec:spec}. For notational convenience, abbreviate $z_t^{(j)} := z_{t, i_t^{(j)}}$ for $j = 1,2$. Recall also the event $\mathcal{E}_{t,c}$ from \cref{eq:defevent}, whose probability goes to $1$ by \Cref{corry:eventj}.

\subsection{Ancillary lemmas}
\begin{lemma}[Correspondence with $j$-local eigenvalues]
\label{lem:corres}
Almost surely,
\begin{align*}
\max_{z \in \Pi^{(L_{t, \varepsilon})}} |\tilde{\lambda}_t(z) - \tilde{\lambda}_t^{(j)}(z) | = o(d_t e_t) \,.
\end{align*}
\end{lemma}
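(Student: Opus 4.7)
The plan is to apply the truncated path expansion from Proposition \ref{prop:pathexpforlambda} to both $\tilde{\lambda}_t^{(j)}(z)$ and $\tilde{\lambda}_t(z)$ and observe that the truncations agree exactly. First, I would note that $\tilde{\lambda}_t(z) = \tilde{\lambda}_t^{(n)}(z)$ once $n$ exceeds the diameter of $V_t$, since then $B(z, n) = V_t$ and the Dirichlet restriction in the definition of $\tilde{\mathcal{H}}_n^{(z)}$ becomes vacuous. Inspecting the proof of Proposition \ref{prop:pathexpforlambda}, the truncation error $(L_{t, \varepsilon} - L_t)^{-(2j-1)} = o(d_t e_t)$ is independent of $n$, so the expansion applies equally to $\tilde{\lambda}_t(z)$ with paths taken in all of $V_t$.

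The key geometric observation is that any nearest-neighbour path $z = y_0 \to y_1 \to \cdots \to y_k = z$ of length $k \leq 2j$ stays within $B(z, j)$, because each intermediate vertex satisfies $|y_i - z| \leq \min(i, k-i) \leq j$. Hence $\Gamma_k^\ast(z, V_t) = \Gamma_k^\ast(z, j)$ for $k \leq 2j$, and moreover, by Lemma \ref{lem:assep}, eventually $\tilde{\xi} = \xi$ on $B(z, j) \setminus \{z\}$ almost surely, since the $z$-neighbours of $z$ in $\Pi^{(L_t)}$ are separated by more than $2j$. Consequently, defining
$$F_z(\lambda) := \xi(z) + \sum_{2 \leq k \leq 2j} \sum_{\Gamma^\ast_k(z, j)} \prod_{0 < i < k} \frac{1}{\lambda - \xi(y_i)},$$
Proposition \ref{prop:pathexpforlambda} yields, uniformly over $z \in \Pi^{(L_{t, \varepsilon})}$ almost surely,
$$\tilde{\lambda}_t(z) = F_z\bigl(\tilde{\lambda}_t(z)\bigr) + o(d_t e_t) \qquad \text{and} \qquad \tilde{\lambda}_t^{(j)}(z) = F_z\bigl(\tilde{\lambda}_t^{(j)}(z)\bigr) + o(d_t e_t).$$

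Subtracting gives $\tilde{\lambda}_t(z) - \tilde{\lambda}_t^{(j)}(z) = F_z(\tilde{\lambda}_t(z)) - F_z(\tilde{\lambda}_t^{(j)}(z)) + o(d_t e_t)$. To close the estimate, I would bound $|F_z'(\lambda)|$ on the relevant range. Differentiating a product of at most $2j-1$ factors $(\lambda - \xi(y_i))^{-1}$ and using the crude lower bound $\lambda - \xi(y_i) > L_{t, \varepsilon} - L_t$ for intermediate $y_i$ (combined with $\lambda \geq \xi(z) > L_{t, \varepsilon}$) gives $|F_z'(\lambda)| = O(a_t^{-2}) = o(1)$, uniformly over $z \in \Pi^{(L_{t, \varepsilon})}$ and over $\lambda$ in the interval $[L_{t, \varepsilon}, \xi_{t, 1} + 2d]$ which, by the min-max bound of Lemma \ref{lem:minmax}, contains both $\tilde{\lambda}_t(z)$ and $\tilde{\lambda}_t^{(j)}(z)$. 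Therefore
$$\bigl|\tilde{\lambda}_t(z) - \tilde{\lambda}_t^{(j)}(z)\bigr| \bigl(1 - o(1)\bigr) = o(d_t e_t),$$
and the result follows.

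The main obstacle is confirming two uniformity points: that the error bound in Proposition \ref{prop:pathexpforlambda} is genuinely uniform over the polynomially many sites $z \in \Pi^{(L_{t, \varepsilon})}$, and that the expansion extends to $\tilde{\lambda}_t(z)$ (equivalently, to $n$ on the order of $R_t$). Both should follow from a careful rereading of the proof of Proposition \ref{prop:pathexpforlambda}, since every estimate there is driven by $(L_{t,\varepsilon} - L_t)^{-1}$ and by the separation property of $\Pi^{(L_t)}$, neither of which depend on $n$ or on the particular $z$.
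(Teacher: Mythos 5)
Your proof is correct and follows the same route as the paper's: apply the path expansion of \Cref{prop:pathexpforlambda} both to $\tilde{\lambda}_t^{(j)}(z)$ and to $\tilde{\lambda}_t(z)$, observe that closed paths of length $\le 2j$ cannot leave $B(z,j)$, and compare. The one thing you add is an explicit treatment of the implicit dependence of the expansion on its own eigenvalue — the fixed-point function $F_z$ and the bound $|F_z'|=O(a_t^{-2})$ — a step the paper's proof leaves tacit when it says ``comparing ... yields the result,'' so your write-up is, if anything, slightly more careful.
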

\begin{proof}
As in \Cref{prop:pathexpforlambda}, there is a path expansion for $\tilde{\lambda}_t(z)$ with $z \in \Pi^{(L_{t, \varepsilon})}$:
\begin{align}
\label{ex:pathexp}
\tilde{\lambda}_t(z) =  \xi(z) + \sum_{2 \leq k \leq 2j} \sum_{\Gamma^\ast_{k}(z)} \prod_{0 < i < k} \frac{1}{\tilde{\lambda}_t(z) - \xi(y_i)} + o(d_t e_t)  
\end{align}
where $\Gamma^\ast_{k}(z)$ is the set of all length $k$ nearest neighbour paths 
$$ z =: y_0 \to y_1 \to \ldots \to y_{k} := z \quad \text{in } V_t$$
such that $y_i \neq z$ for all $1 < i < k$. Since paths in $\Gamma^\ast_k(z)$ that are not also in $\Gamma^\ast_k(z, j)$ must have length at least $2j+2$, and since $j$ was chosen precisely so that $1/(L_{t, \varepsilon} - L_t)^{2j+1} = o(d_t)$, comparing equation \eqref{ex:pathexp} to the path expansion in \Cref{prop:pathexpforlambda} yields the result.
\end{proof}

\begin{lemma}[Validity of \Cref{assumpt:A}]
\label{lem:assumptA}
On the event $\mathcal{E}_{t,c}$, each 
$$z \in B\left(0, |Z_{t}^{(1, \rho)}|(1 + h_t)\right) \setminus \{Z_{t}^{(1, \rho)}\} $$
satisfies
$$ \tilde{\lambda}_t(Z_{t}^{(1, \rho)}) - \tilde{\lambda}_t(z)  > d_t e_t + o(d_t e_t) \,.$$
\end{lemma}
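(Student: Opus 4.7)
I would split into two cases depending on whether $z$ is a high point. Write $z^\ast := Z_t^{(1,\rho)}$. On $\mathcal{E}_{t,c}$ we have $\mathcal{I}_t^{(j)}$, so $\tilde{\lambda}_t^{(j)}(z^\ast) \geq \tilde{\Psi}_t^{(j)}(z^\ast) > a_t(1-f_t)$, and by \Cref{lem:corres} also $\tilde{\lambda}_t(z^\ast) > a_t(1-f_t) + o(d_t e_t)$.

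\emph{Case 1:} $z \notin \Pi^{(L_{t,\varepsilon})}$, i.e.\ $\xi(z) \leq L_{t,\varepsilon}$. By the min-max bound (\Cref{lem:minmax} applied analogously to $\tilde{\lambda}_t(z)$) one has $\tilde{\lambda}_t(z) \leq \max\{L_t, \xi(z)\} + 2d \leq L_{t,\varepsilon} + 2d$. Since $L_{t,\varepsilon} = ((1-\varepsilon)\log|V_t|)^{1/\gamma}$ is strictly smaller than $a_t$ by a constant factor, the difference $\tilde{\lambda}_t(z^\ast) - \tilde{\lambda}_t(z)$ is of order $a_t$, which dwarfs $d_t e_t$.

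\emph{Case 2:} $z \in \Pi^{(L_{t,\varepsilon})}$. By \Cref{lem:corres} applied at both $z$ and $z^\ast$,
\begin{align*}
\tilde{\lambda}_t(z^\ast) - \tilde{\lambda}_t(z) = \tilde{\lambda}_t^{(j)}(z^\ast) - \tilde{\lambda}_t^{(j)}(z) + o(d_t e_t),
\end{align*}
so it suffices to lower bound the right-hand side. The event $\mathcal{E}_{t,c}$ contains $\mathcal{G}_{t,0}^{(j)}$, and by \Cref{corry:zeqz0}, $Z_t^{(1,j)} = z^\ast$. Hence for every $z \neq z^\ast$,
\begin{align*}
\tilde{\lambda}_t^{(j)}(z^\ast) - \tilde{\lambda}_t^{(j)}(z) > d_t e_t + \frac{(|z^\ast| - |z|)\log\log t}{\gamma t}.
\end{align*}
The second term is only problematic when $|z| > |z^\ast|$, where by hypothesis $|z^\ast| - |z| \geq -h_t|z^\ast|$. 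On the event $\mathcal{H}_t^{(j)}$ we have $|z^\ast| < r_t g_t$, so
\begin{align*}
\frac{|z^\ast| - |z|}{\gamma t}\log\log t \geq -\frac{h_t r_t g_t \log\log t}{\gamma t} = -h_t g_t d_t,
\end{align*}
using $r_t \log\log t / t = \gamma d_t$. The scaling assumption $h_t \ll e_t/g_t$ gives $h_t g_t d_t = o(d_t e_t)$, and combining yields $\tilde{\lambda}_t(z^\ast) - \tilde{\lambda}_t(z) > d_t e_t + o(d_t e_t)$, as desired.

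The only mildly delicate step is the bookkeeping in Case~2 around the sign of $|z^\ast|-|z|$ and the scale comparison $h_t g_t \ll e_t$; everything else is essentially a direct combination of \Cref{lem:corres}, the gap event $\mathcal{G}_{t,0}^{(j)}$, and \Cref{corry:zeqz0}.
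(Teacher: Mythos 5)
Your proof is correct and follows essentially the same route as the paper: apply \Cref{lem:corres} to trade $\tilde{\lambda}_t$ for $\tilde{\lambda}_t^{(j)}$, invoke the gap event $\mathcal{G}^{(j)}_{t,0}$ together with \Cref{corry:zeqz0} to get $\tilde{\Psi}_t^{(j)}(z^\ast) - \tilde{\Psi}_t^{(j)}(z) > d_t e_t$, and absorb the distance term $(|z|-|z^\ast|)\log\log t/(\gamma t)$ into $o(d_t e_t)$ using $|z|\le |z^\ast|(1+h_t)$, $|z^\ast|<r_t g_t$ from $\mathcal{H}_t^{(j)}$, the identity $r_t\log\log t/t=\gamma d_t$, and $h_t g_t = o(e_t)$. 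Your explicit Case~1 (for $z\notin\Pi^{(L_{t,\varepsilon})}$, where the min-max bound makes the gap of order $a_t$) is in fact slightly more careful than the paper, which applies \Cref{lem:corres} ``for such a $z$'' without noting that the correspondence lemma is only stated for $z\in\Pi^{(L_{t,\varepsilon})}$; your split closes that small gap cleanly.
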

\begin{proof}
On the event $\mathcal{E}_{t,c}$, and by the correspondence in
\Cref{lem:corres}, for such a $z$,
\begin{align*}
d_t e_t < \tilde{\Psi}^{(j)}_{t}(Z_{t}^{(1, \rho)}) - \tilde{\Psi}_{t}^{(j)}(z) = \tilde{\lambda}_t(Z_{t}^{(1, \rho)}) - \tilde{\lambda}_t(z) + \frac{|z| - |Z_{t}^{(1, \rho)}| }{\gamma t} \log \log t + o(d_t e_t) \,.
\end{align*}
Moreover, for such a $z$, we also have that
$$ \frac{|z|-|Z_{t}^{(1, \rho)}|}{\gamma t} \log \log t < \frac{|Z_{t}^{(1, \rho)}| h_t }{\gamma t} \log \log t < \frac{r_t g_t h_t}{\gamma t} \log \log t = o(d_t e_t) $$
since $g_t h_t = o(e_t)$, which yields the result.
\end{proof}

\begin{lemma}[Application of lower bound]
\label{lem:lowerbound}
On the event $\mathcal{E}_{t,c}$, the following hold:
\begin{enumerate}[(a)]
\item
There exists an index $k_t \le |V_t|^\varepsilon$ such that, eventually, 
$$ z_{t, k_t} = Z_{t}^{(1, \rho)} \, ;$$
\item
Moreover,
$$\Psi_{t}(k_t) > \tilde{\Psi}_{t}^{(j)}(Z_{t}^{(1, \rho)}) + o(d_t e_t) > a_t(1-f_t)  + o(d_t e_t) \,.$$ 
\end{enumerate}
\end{lemma}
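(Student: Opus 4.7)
The plan is to locate a spectral index $k_t$ whose associated eigenfunction is localised at $Z_t^{(1, \rho)}$ and then to lower-bound $\Psi_t(k_t) = \lambda_{t, k_t} + t^{-1}\log|\varphi_{t, k_t}(0)|$ using the eigenvalue correspondence and the eigenfunction estimates from \Cref{sec:spec}. For part~(a), I will argue that $\tilde{\lambda}_t(Z_t^{(1, \rho)})$ sits among the top $|V_t|^\varepsilon$ order statistics of $\{\tilde{\lambda}_t(z)\}_{z \in V_t}$, at which point part~(a) of \Cref{prop:lambda} supplies an index $k_t \leq |V_t|^\varepsilon$ with $z_{t, k_t} = Z_t^{(1, \rho)}$. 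Concretely, on $\mathcal{E}_{t,c}$ the event $\mathcal{I}_t^{(j)}$ combined with \Cref{corry:zeqz0} gives $\tilde{\Psi}_t^{(j)}(Z_t^{(1, \rho)}) > a_t(1-f_t)$; since $|Z_t^{(1, \rho)}|(\gamma t)^{-1}\log\log t = o(a_t)$ (by $\mathcal{H}_t^{(j)}$), this forces $\tilde{\lambda}_t^{(j)}(Z_t^{(1, \rho)}) > a_t(1-o(1))$, and hence $\tilde{\lambda}_t(Z_t^{(1, \rho)}) > a_t(1-o(1))$ by \Cref{lem:corres}. Part~(d) of \Cref{lem:auxasympt} tells us that the $\floor{|V_t|^\varepsilon}$-th largest value of $\tilde{\lambda}_t(\cdot)$ lies below $L_{t, \varepsilon''} < a_t$, so $Z_t^{(1, \rho)}$ must be among the top $|V_t|^\varepsilon$ sites, completing part~(a).

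For part~(b) I will bound the two ingredients of $\Psi_t(k_t)$ separately. Part~(b) of \Cref{prop:lambda} together with \Cref{lem:corres} yields $\lambda_{t, k_t} = \tilde{\lambda}_t^{(j)}(Z_t^{(1, \rho)}) + o(d_t e_t)$. For the eigenfunction at zero, I invoke \Cref{corry:lowerboundAtZero}; its first two prerequisites, $k_t \leq |V_t|^\varepsilon$ and $|z_{t, k_t}| < r_t g_t$, follow from part~(a) and from $\mathcal{H}_t^{(j)} \subseteq \mathcal{E}_{t,c}$ via \Cref{corry:zeqz0}. Adding the two estimates then gives $\Psi_t(k_t) > \tilde{\Psi}_t^{(j)}(Z_t^{(1, \rho)}) + o(d_t e_t)$, and the second inequality of the statement follows at once from $\mathcal{I}_t^{(j)}$.

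The main obstacle is verifying \Cref{assumpt:A} for the index $k_t$: I must show that no point of $H(\lambda_{t, k_t}) \setminus \{Z_t^{(1, \rho)}\}$ sits inside $B(0, |Z_t^{(1, \rho)}|(1+h_t))$. This is exactly where \Cref{lem:assumptA} bites: it supplies a strict gap of size $d_t e_t (1+o(1))$ between $\tilde{\lambda}_t(Z_t^{(1, \rho)})$ and $\tilde{\lambda}_t(z)$ for every competing nearby $z$, and this gap dominates the $o(d_t e_t)$ discrepancy between $\lambda_{t, k_t}$ and $\tilde{\lambda}_t(Z_t^{(1, \rho)})$ provided by part~(b) of \Cref{prop:lambda} and \Cref{lem:corres}. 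Consequently every such $z$ satisfies $\tilde{\lambda}_t(z) < \lambda_{t, k_t}$, excluding it from $H(\lambda_{t, k_t})$, so \Cref{assumpt:A} holds and \Cref{corry:lowerboundAtZero} becomes applicable. The delicacy here is purely a matching-of-scales issue: the $d_t e_t$ gap separation that was engineered into the event $\mathcal{G}_{t, 0}^{(j)}$ precisely absorbs the spectral approximation errors incurred in replacing $\lambda_{t, k_t}$ by $\tilde{\lambda}_t^{(j)}(Z_t^{(1, \rho)})$, and without it the lower bound at zero would not close.
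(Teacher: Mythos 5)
Your proof is correct and follows the paper's own argument essentially step by step: the index $k_t$ is produced via the spectral correspondence of \Cref{prop:lambda} combined with the gap to $L_{t,\varepsilon''}$ from \Cref{lem:auxasympt}; \Cref{assumpt:A} is verified through \Cref{lem:assumptA}; and the lower bound on $\Psi_t(k_t)$ is assembled from \Cref{prop:lambda}(b), \Cref{lem:corres} and \Cref{corry:lowerboundAtZero}. The only (cosmetic) difference is that you spell out the scale-matching check that the $d_t e_t$ gap in \Cref{lem:assumptA} absorbs the $o(d_t e_t)$ spectral approximation errors, a step the paper compresses into a single sentence.
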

\begin{proof} 
On the event $\mathcal{E}_{t,c}$ and from the correspondence in \Cref{lem:corres} we have
$$ \tilde{\lambda}_t(Z_{t}^{(1, \rho)}) > \tilde{\lambda}_t^{(j)}
(Z_{t}^{(1, \rho)}) + o(d_t e_t) > \tilde{\Psi}_{t}^{(j)}(Z_{t}^{(1, \rho)}) + o(d_t e_t) > a_t(1-f_t) + o(d_t e_t) \,.$$
On the other hand, by the asymptotics in part~\eqref{lem:auxasympt3} of \Cref{lem:auxasympt},
$$ \tilde{\lambda}_{t, \floor{|V_t|^\varepsilon}} < L_{t, \varepsilon''} < a_t(1-f_t)  + o(d_t e_t) \,.$$
Hence there exists an index $k_t \leq |V_t|^\varepsilon$ such that $\tilde{\lambda}_{t, k_t} = \tilde{\lambda}_t(Z_{t}^{(1, \rho)}) $
and by the correspondence in \Cref{prop:lambda} this implies that $z_{t, k_t} = Z_{t}^{(1, \rho)}$. By \Cref{lem:assumptA}, it can be seen that
$k_t$ satisfies \Cref{assumpt:A}, and since $|Z_{t}^{(1, \rho)}| < r_t g_t$ on the event $\mathcal{E}_{t,c}$, we may apply the lower bound
in \Cref{corry:lowerboundAtZero} to the index $k_t$. Along with the correspondence in \Cref{lem:corres} we get that
\begin{align*}
\Psi_{t}(k_t) &= \lambda_{t, k_t} + \frac{\log |\varphi_{t, k_t}(0)|}{t} > \tilde{\lambda}_t^{(j)} (Z_{t}^{(1, \rho)}) - \frac{|Z_{t}^{(1, \rho)}|}{\gamma t} \log \log t + o(d_t e_t) \\
&= \tilde{\Psi}_{t}^{(j)}(Z_{t}^{(1, \rho)}) + o(d_t e_t) > a_t(1-f_t) + o(d_t e_t) \,.
\qedhere
\end{align*}
\end{proof}

\begin{lemma}[Application of upper bound]
\label{lem:upperbound}
On the event $\mathcal{E}_{t,c}$, the following hold:
\begin{enumerate}[(a)]
\item
The index $\maxIndex$ satisfies
$$\maxIndex \le |V_{t}|^{\varepsilon} \, ;$$ 
\item
Moreover,
$$\Psi_{t}(\maxIndex) < \tilde{\Psi}^{(j)}_{t, c}(\maxZ) + o(d_t e_t) \,.$$ 
\end{enumerate}
\end{lemma}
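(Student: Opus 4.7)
The two parts come from combining the earlier lemma on lower bounds (\Cref{lem:lowerbound}) with the spectral-theoretic tools assembled in \Cref{sec:spec}; the two have been set up precisely to fit together.

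For part (a), the plan is to derive a simple upper bound on $\Psi_t(i)$ for indices $i > |V_t|^{\varepsilon}$ that beats the lower bound on $\Psi_t(k_t)$ established in \Cref{lem:lowerbound}. Since each eigenfunction $\varphi_{t,i}$ is $\ell^2$-normalised, $|\varphi_{t,i}(0)| \le 1$, so $\log |\varphi_{t,i}(0)|/t \le 0$ and therefore $\Psi_t(i) \le \lambda_{t, i}$. By part (\ref{lem:auxasympt2}) of \Cref{lem:auxasympt}, for every $i > |V_t|^{\varepsilon}$ we have $\lambda_{t, i} < L_{t, \varepsilon''}$ eventually almost surely. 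Since $L_{t, \varepsilon''}/a_t \to (1-\varepsilon'')^{1/\gamma} < 1$ and $d_t = o(a_t)$, the bound $\Psi_t(k_t) > a_t(1-f_t) + o(d_t e_t)$ from \Cref{lem:lowerbound}(b) strictly exceeds $L_{t, \varepsilon''}$ eventually. This forces $\maxIndex \le |V_t|^{\varepsilon}$ on $\mathcal{E}_{t,c}$.

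For part (b), having established $\maxIndex \le |V_t|^{\varepsilon}$, the strategy is to chain together the three correspondences that connect $\lambda_{t,\maxIndex}$ with $\tilde\lambda_t^{(j)}(\maxZ)$, and then invoke the upper bound on the eigenfunction at zero. Concretely, \Cref{prop:lambda}(b) gives $\lambda_{t,\maxIndex} = \tilde\lambda_{t,\maxIndex} + o(d_t e_t)$, and \Cref{prop:lambda}(a) identifies $\tilde\lambda_{t,\maxIndex} = \tilde\lambda_t(\maxZ)$. Before applying \Cref{lem:corres} we must check that $\maxZ \in \Pi^{(L_{t,\varepsilon})}$; this follows from part~(\ref{lem:auxasympt1}) of \Cref{lem:auxasympt} together with \Cref{lem:minmax}, since $\lambda_{t,\maxIndex} > L_{t,\varepsilon'}$ forces $\xi(\maxZ) > L_{t,\varepsilon'} - 2d > L_{t,\varepsilon}$ eventually. \Cref{lem:corres} then yields $\tilde\lambda_t(\maxZ) = \tilde\lambda_t^{(j)}(\maxZ) + o(d_t e_t)$. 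Finally, applying \Cref{corry:upperbound} to the index $\maxIndex$ — which is possible precisely because $\maxIndex \le |V_t|^{\varepsilon}$ — gives $\log|\varphi_{t,\maxIndex}(0)|/t \le -|\maxZ|(\log\log t)/(\gamma t) + c|\maxZ|/t$. Summing these bounds reassembles exactly the definition of $\tilde\Psi^{(j)}_{t,c}(\maxZ)$ up to $o(d_t e_t)$.

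The argument is essentially a bookkeeping exercise: every ingredient is already in hand, and the constant $c$ in $\tilde\Psi^{(j)}_{t,c}$ is by design the same constant that appears in the upper bound of \Cref{corry:upperbound}, which is what makes the two penalising linear-in-$|z|$ terms cancel exactly in the comparison. The only point that requires any real care is the verification that $\maxZ \in \Pi^{(L_{t,\varepsilon})}$ so that \Cref{lem:corres} is actually applicable; this is the main place where the separation $\varepsilon < \varepsilon'$ in the parameter hierarchy is used.
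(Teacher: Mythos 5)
Your proof is correct and follows essentially the same route as the paper: part~(a) by pitting the lower bound $\Psi_t(k_t) > a_t(1-f_t) + o(d_t e_t)$ from \Cref{lem:lowerbound} against the upper bound $\lambda_{t,i} < L_{t,\varepsilon''}$ for $i > |V_t|^{\varepsilon}$ from \Cref{lem:auxasympt}(\ref{lem:auxasympt2}), and part~(b) by chaining \Cref{prop:lambda}, \Cref{lem:corres} and \Cref{corry:upperbound}. You are in fact a bit more careful than the paper's terse write-up in explicitly verifying $\maxZ \in \Pi^{(L_{t,\varepsilon})}$ before invoking \Cref{lem:corres}, which is a genuine (if minor) gap the paper leaves to the reader.
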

\begin{proof}
Combining \Cref{lem:lowerbound} with the event $\mathcal{E}_{t,c}$ implies that 
$$ \lambda_{t, \maxIndex} \geq \Psi_{t}(\maxIndex) \geq \Psi_{t}(k_t) > \tilde{\Psi}^{(j)}_{t}(Z_{t}^{(1, \rho)}) + o(d_t e_t) > a_t(1-f_t) + o(d_t e_t) \,.$$
On the other hand, by the asymptotics in part~\eqref{lem:auxasympt2} of \Cref{lem:auxasympt},
$$ \lambda_{t,\floor{|V_t|^\varepsilon}} < L_{t, \varepsilon''} < a_t(1 - f_t) + o(d_t e_t)$$
and so $\maxIndex \leq |V_t|^\varepsilon$. We may then apply the upper bound in \Cref{corry:upperbound} to the index $\maxIndex$. Combining this with the correspondence in \Cref{lem:corres} we get
\begin{align*}
\Psi_{t}(\maxIndex) &<\tilde{\lambda}_t^{(j)} (\maxZ) - \frac{|\maxZ|}{\gamma t} \log \log t + \frac{c|\maxZ|}{t} \log \log t + o(d_t e_t) \\
&= \tilde{\Psi}^{(j)}_{t, c}(\maxZ) + o(d_t e_t) \,.
\qedhere
\end{align*}
\end{proof}

\begin{lemma}[Correspondence between $\maxZ$ and $Z_{t}^{(1, \rho)}$] 
\label{lem:zeqz}
On the event $\mathcal{E}_{t,c}$,
$$ \maxZ = Z_{t}^{(1, \rho)} $$
eventually.
\end{lemma}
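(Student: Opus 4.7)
The statement follows by chaining the two sandwich bounds established in Lemmas~\ref{lem:lowerbound} and~\ref{lem:upperbound} and then invoking the spectral gap encoded in the event $\mathcal{E}_{t,c}$. Concretely, from the definition of $\maxIndex$ and part (b) of Lemma~\ref{lem:lowerbound} (applied with the index $k_t$ constructed there), one has
\[ \Psi_t(\maxIndex) \;\geq\; \Psi_t(k_t) \;>\; \tilde{\Psi}^{(j)}_{t}(Z_t^{(1,\rho)}) + o(d_t e_t) \, , \]
while Lemma~\ref{lem:upperbound}(b) supplies the matching upper bound
\[ \Psi_t(\maxIndex) \;<\; \tilde{\Psi}^{(j)}_{t,c}(\maxZ) + o(d_t e_t)\,. \]
Combining these yields $\tilde{\Psi}^{(j)}_{t,c}(\maxZ) > \tilde{\Psi}^{(j)}_{t}(Z_t^{(1,\rho)}) + o(d_t e_t)$.

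The next step is to convert $\tilde{\Psi}^{(j)}_{t}(Z_t^{(1,\rho)})$ into $\tilde{\Psi}^{(j)}_{t,c}(Z_t^{(1,\rho)})$, so that one can compare values of a single functional. Since the two differ by $c|Z_t^{(1,\rho)}|/t$ and on $\mathcal{E}_{t,c}$ the event $\mathcal{H}_t^{(j)}$ combined with Corollary~\ref{corry:zeqz0} gives $|Z_t^{(1,\rho)}| = |Z_t^{(1,j)}| < r_t g_t$, a direct comparison of scales, using $r_t/t \asymp d_t/\log\log t$ together with the standing assumption $g_t h_t \ll e_t$ and $1/\log\log t \ll f_t h_t$, shows that $c|Z_t^{(1,\rho)}|/t = o(d_t e_t)$. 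Hence
\[ \tilde{\Psi}^{(j)}_{t,c}(\maxZ) \;>\; \tilde{\Psi}^{(j)}_{t,c}(Z_t^{(1,\rho)}) + o(d_t e_t) \,. \]

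Finally, I would apply the gap. On the event $\mathcal{E}_{t,c}$, Proposition~\ref{prop:zeqzc} and Corollary~\ref{corry:zeqz0} together imply $Z_{t,c}^{(1,j)} = Z_t^{(1,j)} = Z_t^{(1,\rho)}$, so that $Z_t^{(1,\rho)}$ is itself the unique maximiser of $\tilde{\Psi}^{(j)}_{t,c}$. If one supposes for contradiction that $\maxZ \neq Z_t^{(1,\rho)}$, then $\maxZ$ competes against the runner-up $Z_{t,c}^{(2,j)}$, whence
\[ \tilde{\Psi}^{(j)}_{t,c}(\maxZ) \;\leq\; \tilde{\Psi}^{(j)}_{t,c}(Z_{t,c}^{(2,j)}) \;<\; \tilde{\Psi}^{(j)}_{t,c}(Z_t^{(1,\rho)}) - d_t e_t \]
by the gap event $\mathcal{G}^{(j)}_{t,c}$. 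This flatly contradicts the previous display and completes the argument.

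The proof is almost entirely a bookkeeping exercise: all the heavy lifting (the gap on $\tilde{\Psi}^{(j)}_{t,c}$, the correspondence between $j$-local and $\rho$-local maximisers, the correspondence of indices via the lower bound at zero, and the spectral upper bound) has already been done. The only point that requires a bit of care is verifying that $c|Z_t^{(1,\rho)}|/t = o(d_t e_t)$, which is what forces the slightly awkward choice of auxiliary scales $f_t, h_t, g_t, e_t, \kappa_t$ at the start of Section~\ref{sec:outline}; this is the step I would double-check most carefully.
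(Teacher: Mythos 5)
Your proof is correct and follows essentially the same route as the paper: you chain the lower bound from Lemma~\ref{lem:lowerbound}, the upper bound from Lemma~\ref{lem:upperbound}, the gap event $\mathcal{G}^{(j)}_{t,c}$ (together with Proposition~\ref{prop:zeqzc} and Corollary~\ref{corry:zeqz0} to identify $Z_{t,c}^{(1,j)}$ with $Z_t^{(1,\rho)}$), and the scale comparison $c|Z_t^{(1,\rho)}|/t = o(d_t e_t)$ to reach the same contradiction, just with the intermediate correspondences spelled out explicitly rather than used implicitly.
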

\begin{proof}
Let $k_t$ be as in \Cref{lem:upperbound} and assume that $\maxZ \neq Z^{(1, \rho)}_t = z_{t, k_t}$. Combining the lower bound from \Cref{lem:lowerbound} and the upper bound from \Cref{lem:upperbound}, we have
\begin{align*} 
\tilde{\Psi}^{(j)}_{t, c}(Z_{t}^{(1, \rho)}) & > \tilde{\Psi}^{(j)}_{t, c}(\maxZ) + d_t e_t > \Psi_t(\maxIndex) + d_t e_t + o(d_t e_t) \\
& > \Psi_t(k_t) + d_t e_t + o(d_t e_t) > \tilde{\Psi}^{(j)}_{t} (Z_{t}^{(1, \rho)}) + d_t e_t + o(d_t e_t) \,.
\end{align*}
On the other hand, on the event $\mathcal{E}_{t,c}$
$$|\tilde{\Psi}^{(j)}_{t}(Z_t^{(1, \rho)}) - \tilde{\Psi}^{(j)}_{t, c}(Z_{t}^{(1, \rho)})| = |c|\frac{|Z_{t}^{(1, \rho)}|}{t} \log \log t = o(d_t e_t) $$
giving a contradiction.
\end{proof}

\subsection{Completion of the proof of the auxiliary \Cref{thm:aux}}
We are now in a position to establish the auxiliary \Cref{thm:aux}, which we prove subject to the event $\mathcal{E}_{t,c}$, since this event holds eventually with overwhelming probability. 

First, consider part~\eqref{thm:aux1} of \Cref{thm:aux}. Let $k_t$ be as in \Cref{lem:upperbound}, and remark that \Cref{lem:zeqz} implies that $k_t = \maxIndex$. There are two cases to consider: (i) $\secIndex > |V_t|^\varepsilon$; and (ii) $\secIndex \leq |V_t|^\varepsilon$. In the first case, combine the lower bound in \Cref{lem:lowerbound} with the asymptotics in part~\eqref{lem:auxasympt2} of \Cref{lem:auxasympt} to get that
$$ \Psi_t(\maxIndex) - \Psi_t(\secIndex) > a_t(1-f_t) - L_{t, \varepsilon''} + o(d_t e_t) > d_t e_t$$
which yields the result. In the second case, we may apply the upper bound in \Cref{corry:upperbound} to the index $\secIndex$. Combining this bound with the correspondence in \ \Cref{lem:corres} and the lower bound in \Cref{lem:lowerbound}, we get that eventually
\begin{align*}
\Psi_t(\maxIndex) - \Psi_t(\secIndex) &> \tilde{\Psi}^{(j)}_{t}(\maxZ) - \tilde{\Psi}^{(j)}_{t, c}(\secZ) + o(d_t e_t) \\ &> d_t e_t - |c|\frac{|Z_{t}^{(1, \rho)}|}{t}\log \log t + o(d_t e_t) > d_t e_t + o(d_t e_t) \,.
\end{align*}

Consider now part~\eqref{thm:aux2} of \Cref{thm:aux}. The first statement
follows trivially from the fact that $|Z_t^{(1, \rho)}| < r_t g_t$. The second statement is proved by applying the upper bound on eigenfunctions in
\Cref{corry:expdecay} and the lowerbound on eigenfunctions in
\Cref{prop:lowerbound} to the index $\maxIndex$, which is valid since
$\maxIndex$ satisfies \Cref{assumpt:A}. Then, make the correspondence
in \Cref{lem:zeqz}, and remark that $\kappa_t r_t = o(h_t|Z_t^{(1, \rho)}|)$. The third
statement follows by applying the upperbound on
eigenfunctions in \Cref{corry:expdecay} to the index $\maxIndex$, and summing over all $z \in V_t \setminus B_t$.

%
\bibliography{paper}{}

\begin{thebibliography}{10}

\bibitem{Anderson58}
P.W. Anderson.
\newblock Absence of diffusion in certain random lattices.
\newblock {\em Phys. Rev.}, 109:1492--1505, 1958.

\bibitem{Astrauskas07}
A.~Astrauskas.
\newblock Poisson-type limit theorems for eigenvalues of finite-volume
  {A}nderson {H}amiltonian.
\newblock {\em Acta Appl. Math.}, 96:3--15, 1997.

\bibitem{Astrauskas08}
A.~Astrauskas.
\newblock Extremal theory for spectrum of random discrete {S}chr{\"o}dinger
  operator. {I}. {A}symptotic expansion formulas.
\newblock {\em J. Stat. Phys.}, 131:867--916, 2008.

\bibitem{Astrauskas13}
A.~Astrauskas.
\newblock Extremal theory for spectrum of random discrete {S}chr{\"o}dinger
  operator. {III}. {L}ocalization properties.
\newblock {\em J. Stat. Phys.}, 150:889--907, 2013.

\bibitem{Fiodorov14}
A.~Fiodorov and S.~muirhead.
\newblock Complete localisation and exponential shape of the parabolic
  {A}nderson model with {W}eibull potential field.
\newblock {\em Electron. J. Probab.}, 19 (no.\ 58):1--27, 2014.

\bibitem{Gartner07}
J.~G{\"a}rtner, W.~K{\"o}nig, and S.A. Molchanov.
\newblock Geometric characterization of intermittency in the parabolic
  {A}nderson model.
\newblock {\em Ann. Probab.}, 35:439--499, 2007.

\bibitem{Gartner90}
J.~G{\"a}rtner and S.A. Molchanov.
\newblock Parabolic problems for the {A}nderson model. {I}. {I}ntermittency and
  related topics.
\newblock {\em Comm. Math. Phys.}, 132:613--655, 1990.

\bibitem{Gartner98}
J.~G{\"a}rtner and S.A. Molchanov.
\newblock Parabolic problems for the {A}nderson model. {II}. {S}econd-order
  asymptotics and structure of high peaks.
\newblock {\em Probab. Theory Relat. Fields}, 111:17--55, 1998.

\bibitem{Gartner11}
J.~G{\"a}rtner and A.~Schnitzler.
\newblock Time correlations for the parabolc {A}nderson model.
\newblock {\em Electron. J. Probab.}, 16 (no.\ 56):1519--1548, 2011.

\bibitem{vanDerHofstad06}
{R. van der} Hofstad, W.~K{\"o}nig, and P.~M{\"o}rters.
\newblock The universality classes in the parabolic {A}nderson model.
\newblock {\em Commun. Math. Phys.}, 267:307--353, 2006.

\bibitem{vanDerHofstad08}
{R. van der} Hofstad, P.~M{\"o}rters, and N.~Sidorova.
\newblock Weak and almost sure limits for the parabolic {A}nderson model with
  heavy tailed potentials.
\newblock {\em Ann. Appl. Probab.}, 18(6):2450--2494, 2008.

\bibitem{Konig09}
W.~K{\"o}nig, H.~Lacoin, P.~M{\"o}rters, and N.~Sidorova.
\newblock A two cities theorem for the parabolic {A}nderson model.
\newblock {\em Ann. Probab.}, 37(1):347--392, 2009.

\bibitem{Leadbetter83}
M.R. Leadbetter, G.~Lindgren, and H.~Rootz\'{e}n.
\newblock {\em Extremes and Related Properties of Random Sequences and
  Processes}.
\newblock Springer, 1983.

\bibitem{Morters11}
P.~M{\"o}rters, M.~Ortgiese, and N.~Sidorova.
\newblock Ageing in the parabolic {A}nderson model.
\newblock {\em Ann. Inst. Henri Poincar\'{e} Probab. Stat.}, 47:969--1000,
  2011.

\bibitem{Sidorova12}
N.~Sidorova and A.~Twarowski.
\newblock Localisation and ageing in the parabolic {A}nderson model with
  {W}eibull potential.
\newblock {\em Ann. Probab. (to appear)}, 2014.

\end{thebibliography}
\bibliographystyle{plain}
\end{document}